\newtheoremstyle{BBstyle0}  {}{}{\itshape}{}{\bfseries}{}{6pt}{}
\newtheoremstyle{BBstyle1}  {3pt}{3pt}{\rmfamily}{}{\itshape}{: }{3pt}{}
\newtheoremstyle{BBstyle2}  {3pt}{3pt}{\itshape}{}{\bfseries\large}{}{0pt}{}
\newtheoremstyle{BBstyle3}  {}{}{\itshape}{}{\bfseries}{: }{3pt}{}
\newtheoremstyle{BBstyle4}  {}{}{\rmfamily}{}{\bfseries}{}{6pt}{}
\newtheorem{thm}{Theorem}
\newtheorem{lem}{Lemma}
\newtheorem{prop}{Proposition}
\newtheorem{df}{Definition}
\newtheorem{cor}{Corollary}
\newtheorem{ass}{Assumption}
\theoremstyle{definition}
\newtheorem{exa}{Example}
\newcommand{\pa}[1]{\left({#1}\right)}
\newcommand{\norm}[1]{\left\|{#1}\right\|}
\newcommand{\cro}[1]{\left[{#1}\right]}
\newcommand{\ab}[1]{\left|{#1}\right|}
\newcommand{\ac}[1]{\left\{{#1}\right\}}
\newcommand{\argmin}{\mathop{\rm argmin}}
\newcommand{\Var}{\mathop{\rm Var}\nolimits}
\newcommand{\dfleche}[1]{\,\displaystyle{\mathop{\longrightarrow}_{#1}}\,}
\newcommand{\CP}[1]{\stackrel{\mathrm{P}}{\dfleche{#1}}}
\newcommand{\E}{{\mathbb{E}}}
\renewcommand{\L}{{\mathbb{L}}}
\newcommand{\N}{{\mathbb{N}}}
\renewcommand{\P}{{\mathbb{P}}}
\newcommand{\R}{{\mathbb{R}}}
\newcommand{\Z}{{\mathbb{Z}}}
\newcommand{\sB}{{\mathscr{B}}}
\newcommand{\sD}{{\mathscr{D}}}
\newcommand{\sF}{{\mathscr{F}}}
\newcommand{\sL}{{\mathscr{L}}} 
\newcommand{\sM}{{\mathscr{M}}}
\newcommand{\sP}{{\mathscr{P}}}
\newcommand{\sT}{{\mathscr{T}}}
\DeclareMathAlphabet{\mathscrbf}{OMS}{mdugm}{b}{n}
\newcommand{\cA}{{\mathcal{A}}}
\newcommand{\cB}{{\mathcal{B}}}
\newcommand{\cE}{{\mathcal{E}}}
\newcommand{\cF}{{\mathcal{F}}}
\newcommand{\cK}{{\mathcal{K}}}
\newcommand{\cM}{{\mathcal{M}}}
\newcommand{\cN}{{\mathcal{N}}}
\newcommand{\cR}{{\mathcal{R}}}
\newcommand{\cU}{{\mathcal{U}}}
\newcommand{\cV}{{\mathcal{V}}}
\newcommand{\gj}{{\mathbf{j}}}
\newcommand{\gm}{{\mathbf{m}}}
\newcommand{\gp}{{\mathbf{p}}}
\newcommand{\gx}{{\bs{x}}}
\newcommand{\gJ}{{\mathbf{J}}}
\newcommand{\gL}{{\mathbf{L}}} 
\newcommand{\gM}{{\mathbf{M}}}
\newcommand{\gP}{{\mathbf{P}}}
\newcommand{\gT}{{\mathbf{T}}}
\newcommand{\bs}[1]{\boldsymbol{#1}}
\newcommand{\bsX}{{\bs{X}}}
\newcommand{\gtheta}{{\bs{\theta}}}
\newcommand{\gTheta}{{\Theta}}
\newcommand{\es}{{s}}
\newlist{lista}{enumerate}{2}
\setlist[lista,1]{label=\alph*),ref=\alph*)}
\newlist{listi}{enumerate}{1}
\setlist[listi,1]{label=(\roman*),ref=(\roman*),align=left}
\newlist{lists}{enumerate}{3}
\setlist[lists,1]{label=$\blacktriangleright$,align=right}
\setlist[lists,2]{label=$\bullet$,align=right}
\setlist[lists,3]{label=$\star$,align=right}
\newcommand{\eref}[1]{(\ref{#1})}
\renewcommand{\ge}{\geqslant}
\renewcommand{\le}{\leqslant}
\newcommand{\1}{1\hskip-2.6pt{\rm l}}
\newcommand{\0}{{\bf 0}}
\newcommand{\scal}[2]{\langle #1,#2\rangle}
\newcommand{\etc}[1]{#1_1,\ldots,#1_n}
\newcommand{\dps}[1]{\displaystyle{#1}}
\newcommand{\on}{^{\otimes n}}
\newcommand{\et}{^{\star}}
\renewcommand{\atop}[2]{\substack{#1\\#2}}
\newcommand{\card}[1]{{\ab{#1}}}
\newcommand{\eps}{{\varepsilon}}
\def\vide{{\varnothing}}
\newcommand{\map}[5]{
\begin{array}{l|rcl}
#1: & #2 & \longrightarrow & #3 \\
    & #4 & \longmapsto & #5 
\end{array}
    }
\newcommand{\co}[1]{\leftidx{^\mathsf{c}}{\!{#1}}{}} 
\def\bst{\bs{\theta}}
\def\ray{{r}}
\def\cc{{c}}
\def\PAP{{\overline Q}}
\begin{document}
\title{From robust tests to Bayes-like posterior distributions}
\author{Yannick BARAUD}
\address{\parbox{\linewidth}{Department of Mathematics,\\
University of Luxembourg\\
Maison du nombre\\
6 avenue de la Fonte\\
L-4364 Esch-sur-Alzette\\
Grand Duchy of Luxembourg}}
\email{yannick.baraud@uni.lu}
\keywords{Bayes procedure -- Gibbs estimator -- Posterior distribution -- Robustness -- Hellinger distance -- Total variation distance.}
\subjclass{Primary 62G05, 62G35, 62F35, 62F15}
\thanks{This project has received funding from the European Union's Horizon 2020 research and innovation programme under grant agreement N\textsuperscript{o} 811017}
\date{\today}

\begin{abstract}
In the Bayes paradigm, given a loss function and an $n$-sample, we present the construction of a new type of posterior distribution, that extends the classical Bayes one. The loss functions we have in mind are either those derived from the total variation and Hellinger distances or some $\L_{j}$-ones for $j>1$. We prove that, with a probability close to one, this new posterior distribution concentrates its mass in a neighbourhood (for the chosen loss function) of the law of the data, provided that this law belongs to the support of the prior or, at least, lies close enough to it. We therefore establish that the new posterior distribution enjoys some robustness properties with respect to a possible misspecification of the prior, or more precisely, its support. 
We also show that the posterior distribution is stable with respect to the equidistribution assumption we started from. Besides, when the model is regular and well-specified and one uses the squared Hellinger loss, we show that our credible regions possess, at least for $n$ sufficiently large, the same ellipsoidal shapes and approximately the same sizes as those we would derive from the classical Bayesian posterior distribution by using the Bernstein--von Mises theorem. Then we use our Bayesian-like approach to solve the following problems.  We first consider the estimation of a location parameter or both the location and scale parameters of a density in a nonparametric framework. Then we tackle the problem of estimating a density, with the squared Hellinger loss, in a high-dimensional parametric model under some sparsity conditions on the parameter. Importantly, the results established in this paper are nonasymptotic and provide, as much as possible, bounds with explicit constants.
\end{abstract}

\maketitle

\section{Introduction}
Observe $n$ i.i.d.\ random variables $\etc{X}$ with values in a measurable space $(E,\cE)$ and assume that their common distribution $P\et$ belongs to a family $\sM$ of candidate probabilities, or at least lies close enough to it in a suitable sense. We consider the problem of estimating $P\et$ from the observation of $\bsX=(\etc{X})$ and we evaluate the performance of an estimator with values in $\sM$ by means of a given loss function $\ell:\sP\times \sM\to \R_{+}$, where $\sP$ denotes a set of probabilities containing $P\et$. 


Our approach to solve this estimation problem has a Bayesian flavour. We endow $\sM$ with a $\sigma$-algebra $\cA$ and a probability measure $\pi$ that plays the same role as the prior in the classical Bayes paradigm. Our aim is to design a posterior distribution $\widehat \pi_{\bsX}$, solely based on $\bsX$ and the choice of $\ell$, that concentrates its mass, with a probability close to one, on an {\em $\ell$-ball,} namely a set of the form
\begin{equation}
\sB(P\et,r)=\ac{P\in\sM,\; \ell(P\et,P)\le r}\quad \text{with}\quad r>0.
\
\label{eq-boules}
\end{equation}
This means that with a probability close to 1, a point $\widehat P$ which is randomly drawn according to our (random) distribution $\widehat \pi_{\bsX}$ is likely to estimate $P\et$ with an accuracy (with respect to the chosen loss $\ell$) not larger than $r$. Our objective is to design $\widehat \pi_{\bsX}$ in such a way that this concentration property holds for small values of $r$ and under mild assumptions on $P\et$ and $\sM$. 

In the literature, many authors have studied the concentration properties of the classical Bayes posterior distribution on Hellinger balls. We refer to the pioneering papers by van der Vaart and his co-authors --- see for example Ghosal, Ghosh and van der Vaart~\citeyearpar{MR1790007}. They show that the concentration property around $P\et$ holds, as $n$ tends to infinity, provided that the prior $\pi$ puts enough mass on sets of the form $\cK(P\et,\eps)=\{P\in\sM,\; K(P\et,P)<\eps\}$ where $\eps$ is a positive number and $K(P\et,P)$ the Kullback--Leibler divergence between $P\et$ and $P$. This assumption may, however, be quite restrictive even in the favorable situation where $P\et$ belongs to the model $\sM$. Such sets may indeed be empty, and the condition therefore unsatisfied, when the probabilities in $\sM$ are not equivalent. This is for example the case when $\sM$ is the set of all uniform distributions $P_{\theta}$ on $[\theta-1/2,\theta+1/2]$, with $\theta\in\R$, although the problem of estimating $P\et\in\sM$ in this setting is quite easy, even in the Bayesian paradigm. The assumption appears even more restrictive when the probability $P\et$ does not belong to $\sM$, that is when the model is misspecified. 
For example, if the distributions in $\sM$ are all equivalent and $R$ is singular with respect to $\overline P\in\sM$,  $\cK(P\et,\eps)$ is empty for $P\et=(1-10^{-10})\overline P +10^{-10}R$ although $P\et$ and $\overline P\in\sM$ are statistically indistinguishable from any $n$-sample of realistic size. 

Unfortunately, it is in general impossible to get rid of the restrictive conditions we have mentioned above. It is well known that the Bayes posterior distribution can be unstable in case of a misspecification of the model. Examples that illustrate this weakness have been given in Jiang and Tanner~\citeyearpar{MR2458185} and Baraud and Birg\'e~\citeyearpar{BarBir2020} for instance. This instability is due to the fact that the Bayes posterior distribution is based on the log-likelihood function and similar issues are known for the maximum likelihood estimator. 

In order to obtain the concentration and stability properties we look for, we replace the log-likelihood function by a more stable one. Substituting another function to the log-likelihood one is not new in the literature and leads to what is called {\em quasi-posterior distributions}.  The resulting estimators, called {\em quasi-Bayesian estimators} or {\em Laplace type estimators}, have been studied by various statisticians among which Chernozhukov and Hong~\citeyearpar{MR1984779} and Bissiri~{\em et al.}~\citeyearpar{MR3557191} (we also refer to the references therein). These papers, however, do not address the problem of misspecification. In contrast, it is addressed in Jiang and Tanner~\citeyearpar{MR2458185} for performing variable selection in the logistic model. The authors show that the classical Bayesian approach is no longer reliable when the model is slightly misspecified while their Gibbs posterior distribution performs well and offers thus a much safer alternative. The problem of estimating a high-dimensional parameter $\theta\in\R^{d}$ under a sparsity condition was considered in Atchad\'e~\citeyearpar{MR3718168}. His quasi-posterior distribution is obtained by replacing the joint density  of the data by a more suitable one and by using some specific prior that forces sparsity. He proves that the so-defined posterior distribution contracts around the true parameter $\theta\et$ at rate $\sqrt{(s\et\log d)/n}$ (where $s\et$ is the number of nonzero coordinates of $\theta\et$) when both $d$ and $n$ tend to infinity. A common feature of the papers we have cited above lies in their asymptotic nature. This is not the case for Bhattacharya, Pati and Yang~\citeyearpar{MR3909926} who replaced the likelihood function in the expression of the posterior distribution by the {\em fractional likelihood}, that is  a suitable power of the likelihood function. The authors also consider the situation where the model is possibly misspecified but their result involves the $\alpha$-divergence which, as the Kullback one, can be infinite even when the true distribution of the data is close to the model for the total variation distance or the Hellinger one.  

Baraud and Birg\'e~\citeyearpar{BarBir2020} propose a surrogate to the Bayes posterior distribution that is called the {\em $\rho$-posterior distribution} in reference to the theory of $\rho$-estimation that was developed in Baraud {\em et al.}~\citeyearpar{MR3595933} and Baraud and Birg\'e~\citeyearpar{BarBir2018} in order to solve the various problems connected to the maximum likelihood method. The $\rho$-posterior distribution preserves some of the nice features of the classical Bayes one but also possesses the robustness property we are interested in. The authors show that their posterior distribution concentrates on a Hellinger ball around $P\et$ as soon as the prior puts enough mass around a point which is close enough to $P\et$. However their approach applies to specific dominated models $\sM=\{P=p\cdot \mu,\; p\in\cM\}$ only. They assume that the family $\cM$ of densities that defines their model possesses some special combinatorial structure which is either met when $\cM$ is finite or when it satisfies some VC-type condition (see their Section~5). As a consequence, the concentration radius they obtain not only depends on the choice of the prior but also on a complexity term that is linked to this structure. Unlike theirs, our approach makes no such assumptions on $\cM$ and we are therefore able to get rid of this unpleasant complexity term while retaining a similar dependency with respect to the choice of the prior. Baraud and Birg\'e's posterior distribution has also the drawback to involve the supremum over the family $\cM$ of an empirical process. Their posterior distribution is therefore difficult to calculate in practice, unless $\cM$ is finite with a reasonable size. From a more theoretical point of view, it also raises some unpleasant issues with regard to the measurability of this supremum in the typical situation where the family $\cM$ is uncountable, which is the typical case.  Finally, Baraud and Birg\'e's approach restricts to the squared Hellinger loss while ours applies to many others.

Closer to our approach are the aggregation methods and PAC-Bayesian techniques that have been popularized by Olivier Catoni in statistical learning (see Catoni~\citeyearpar{Catoni04}). This approach has mainly been applied for the purpose of empirical risk minimization and statistical learning (see for example Alquier~\citeyearpar{MR2483458}). Our aim is to extend these techniques toward a versatile tool that can solve our Bayes-like estimation problem for various loss functions simultaneously. 

The problem of designing a good estimator of $P\et$ for a given loss function $\ell$ was tackled in the frequentist paradigm in Baraud~\citeyearpar{BY-TEST}. There, the author provides a general framework that enables one to deal with various loss functions of interest, among which the total variation, 1-Wasserstein, Hellinger, and $\L_{j}$-losses among others. His approach relies on the construction of a suitable family of robust tests and lies in the line of the former work of Le Cam~\citeyearpar{MR0334381}, Birg\'e~\citeyearpar{MR722129} and Birg\'e~\citeyearpar{MR2219712}. The aim of the present paper is to transpose this theory from the frequentist to the Bayesian paradigm. If $\ell $ is the Kullback--Leibler divergence, our construction recovers the classical Bayes posterior distribution even though this is not the choice we would recommend for the reasons we have explained before. 

Quite surprisingly, the concentration properties that we establish here require almost no assumption on $\sM$ and the distribution of the data (apart from independence). They mostly depend on the choices of the prior $\pi$ and the loss function $\ell$. For a suitable element $P$ which belongs to the model $\sM$ and lies close enough to $P\et$, these concentration properties depend on the minimal value of the radius $r$ over which the log-ratio $V(P,r)=\log\cro{\pi(\sB(P,2r))/\pi(\sB(P,r))}$ (with $\sB$ defined in \eref{eq-boules}) increases at most linearly with  $r$. This log-ratio was introduced in  Birg\'e~\citeyearpar{MR3390134} for the purpose of analyzing the behaviour of the classical Bayes posterior distribution. In our Bayes-like paradigm, we show that the behaviour of the quantities $V(P,r)$ for $P\in\sM$ and $r>0$ completely encapsulates the complexity of the model $\sM$. We prove that our posterior distribution $\widehat \pi_{\bsX}$ concentrates on an $\ell$-ball centered at $P\et$ and the radius $r=r(n)$ of which is usually of minimax order as $n$ tends to infinity when the model is well-specified. From a nonasymptotic point of view, we show that $\widehat \pi_{\bsX}$ retains its nice concentration properties as long as $P\et$ remains close enough to an element $P$ in $\sM$ around which the prior puts enough mass, that is, even in the situation where the model is slightly misspecified.  Actually, we establish the stronger result that even when the data are only independent but not i.i.d., the above conclusion remains true for the average $\overline P\et$ of their marginal distributions in place of $P\et$. We therefore show that the posterior distribution $\widehat \pi_{\bsX}$ enjoys some robustness properties with respect to the equidistribution assumption we started from. The main theorems involve as much as possible explicit numerical constants. We illustrate our results with examples which are deliberately chosen to be as general and simple as possible. Our aim is to give a flavour of the results that can be established with our Bayes-like posterior, avoiding as much as possible the technicalities that would result from the choice of {\em ad-hoc} priors introduced to solve specific problems. Instead, we wish to discuss the optimality and robustness properties of our construction for solving general parametric and nonparametric estimation problems in the density framework under assumptions that we wish to be as weak as possible. These posterior distributions will therefore provide a benchmark for comparison with other methods. Their practical implementation will be the subject of future work.

Of special interest is the choice of $\ell$ given by the total variation distance or the Hellinger one.  As we shall see,  for such losses the stability of our posterior distribution automatically leads to estimators $\widehat P\sim \widehat \pi_{\bsX}$ that are naturally robust to  the presence of outliers or contaminating data among the sample. These results contrast sharply with the instability of the classical Bayes posterior distribution we underlined earlier. Nevertheless, our posterior distribution also shares some similarities with the classical Bayes one. When the model is well-specified and one uses the squared Hellinger loss, we show that the credible regions of our posterior distribution asymptotically possess the same ellipsoidal shapes and approximately the same sizes as the ones we derive from the classical Bayes posterior by means of the Bernstein--von Mises theorem. Establishing an analogue of this theorem for our Bayes-like posterior distribution is, however, beyond the scope of the present paper. 

Our paper is organized as follows. We present our statistical setting in Section~\ref{sct-setting}. We consider there independent but not necessarily i.i.d.\ data in order to analyse later on the behaviour of our posterior distribution with respect to a possible departure from equidistribution. The construction of the posterior distribution is described in Section~\ref{sect-CPD}. In this section, we also show how more classical constructions based on the likelihood or the fractional likelihoods are particular cases of ours. We complete this section with some heuristics which, we hope, help understanding the main ideas of our approach. In particular, we bridge there the problem of designing robust posterior distributions to that of testing between two disjoint $\ell$-balls.  Section~\ref{sect-main} is devoted to the main theorems. We describe there the concentration properties of our posterior distribution. The applications of these results to classical loss functions are presented in Section~\ref{sect-ACLF}. We put a special emphasis on the cases of the total variation distance and the squared Hellinger loss. In the remaining part of the paper, we only focus on these two losses. In Section~\ref{sect-ctcbato} we highlight some similarities and differences between the classical Bayes posterior and ours for the squared Hellinger loss. In Section~\ref{sct-app} we explain how our posterior distribution can be used to solve the problem of estimating a density, or a parameter associated with it, in several statistical frameworks of interest. We discuss there how the concentration properties of our posterior distribution deteriorate in the case of a misspecification of the model by the prior. We also consider the problems of estimating a density in a location-scale family and a high-dimensional parameter in a parametric model under a sparsity constraint. We also show how our estimation strategy leads to unusual rates of convergence for estimating a translation parameter in a non-regular statistical model. In Section~\ref{sect-copaic}, we show how to evaluate the concentration radius of our posterior distributions in the parametric framework. Finally, Section~\ref{sct-PMR} is devoted to the proofs of the main theorems and Section~\ref{sct-OP} to the other proofs. 

\section{The statistical setting}\label{sct-setting}
Let $\bsX=(\etc{X})$ be an $n$-tuple of independent random variables with values in a measurable space $(E,\cE)$ and joint distribution $\gP\et=\bigotimes_{i=1}^{n}P_{i}\et$. 
Even though this might not be true, we pretend that the $X_{i}$ are i.i.d.\  and our aim is to estimate their (presumed) common distribution $P\et$ from the observation of $\bsX$. To do so, we introduce a family $\sM$ that consists of candidate probabilities (or merely finite signed measures in the case of the $\L_{j}$-loss). The reason for considering finite signed measures lies in the fact that statisticians sometimes estimate probability densities by integrable functions that are not necessarily densities but elements of a suitable linear space for instance (think of the case of projection estimators). We endow $\sM$ with a $\sigma$-algebra $\cA$ and a probability measure $\pi$, that we call {\em a prior} by analogy to the classical Bayesian framework, and we refer to the resulting pair $(\sM,\pi)$ as our {\em model}. The model $(\sM,\pi)$ plays here a similar role as in the classical Bayes paradigm. It encapsulates the {\em a priori} information that the statistician has on $P\et$. Nevertheless, we do not assume that $P\et$, if it ever exists, belongs to $\sM$ nor that the true marginals $P_{i}\et$ do. We rather assume that the model $(\sM,\pi)$ is approximately correct in the sense that the average distribution
\[
\overline P\et=\frac{1}{n}\sum_{i=1}^{n}P_{i}\et
\]
is close enough to some point $P$ in $\sM$ around which the prior $\pi$ puts enough mass. We assume that $\overline P\et$ belongs to a given set $\sP$ of probability measures on $(E,\cE)$ and we measure the estimation accuracy by means of a loss function $\ell:(\sM\cup\sP)\times \sM\to \R_{+}$ which is not identical to 0 in order to avoid trivialities. Even though $\ell$ may not be a genuine distance in general, we assume that it shares some similar features and we interpret it as if it were. For this reason, we call $\ell$-{\em ball} (or {\em ball} for short)  centered at $P\in \sP\cup \sM$ with radius $r>0$ the subset of $\sM$ 
\[
\sB(P,r)=\ac{Q\in\sM,\; \ell(P,Q)\le r}.
\]
Our aim is to built {\em a posterior distribution} (or posterior for short) $\widehat \pi_{\bsX}$ on $(\sM,\cA)$, depending on our observation $\bsX$, which concentrates with a probability close to 1 on an $\ell$-ball of the form $\sB(\overline P\et,r_{n})$ where we wish the value of $r_{n}>0$ to be small. 

\subsection{The special case of parametrized models}\label{sect-scpm}
In many situations we consider statistical models $\sM=\{P_{\theta},\; \theta\in \Theta\}$ which are parametrized via a one-to-one mapping $\theta\mapsto P_{\theta}$. When $(\Theta, \frak{B},\nu)$ is a measurable space, we endow $\sM$ with the $\sigma$-algebra $\cA=\{A,\; \{\theta\in \Theta,\; P_{\theta}\in A\}\in \frak{B}\}$. This choice possesses several advantages. First, the mapping $\theta\mapsto P_{\theta}$ is measurable from $(\Theta,\frak{B})$ onto $(\sM,\cA)$ and we may therefore define the prior $\pi$ on $(\sM,\cA)$ as the image of $\nu$ by this mapping. Besides, 
a function $F$ is measurable on $(\sM,\cA)$ if and only if the mapping $\theta\mapsto F\circ P_{\theta}$ is measurable on $(\Theta,\frak{B})$. This property makes the measurability of $F$ easier to check in general. In particular, the mapping $F:P_{\theta}\mapsto \theta$ is measurable on $(\sM,\cA)$ because $\theta\mapsto F\circ P_{\theta}=\theta$ is measurable on $(\Theta,\frak{B})$ and we may then define a posterior $\widehat \nu_{\bsX}$ on $(\Theta,\frak{B})$ as the image by $F$ of our posterior $\widehat \pi_{\bsX}$ on $(\sM,\cA)$.  By definition of $\widehat \nu_{\bsX}$, for all $\theta\in \Theta$ and $r>0$ 
\begin{equation}\label{eq-lpara}
\widehat \pi_{\bsX}\pa{\sB(P_{\theta},r)}=\widehat \nu_{\bsX}\pa{\ac{\theta'\in \Theta,\; \ell(\theta,\theta')\le r}}
\end{equation}
where $\ell(\theta,\theta')$ denotes, slightly abusively, $\ell(P_{\theta},P_{\theta'})$ for $\theta,\theta'\in \Theta$. The concentration of $\widehat \pi_{\bsX}$ on an $\ell$-ball centered at $P_{\theta}$ with radius $r>0$ is then equivalent to the concentration of 
$\widehat \nu_{\bsX}$ on the set $\{\theta'\in \Theta,\; \ell(\theta,\theta')\le r\}$. Every time we consider a parametrized model, we assume that it is identifiable and implicitly use the construction that we presented above as well as its consequences.

\subsection{Notation and conventions}\label{sect-nc}
Throughout this paper, we use the following notation and conventions. For $a,b\in\R$, $a\vee b $ and $a\wedge b$ denote $\min\{a,b\}$ and $\max\{a,b\}$ respectively. For $x\in\R$, $(x)_{+}=x\vee 0$ while $(x)_{-}=(-x)\vee 0$. The Euclidean spaces $\R^{k}$ with $k\ge 1$ are equipped with their Borel $\sigma$-algebras. The cardinality of a set $A$ is denoted $|A|$ and its complement $\co{A}$. In particular,  for $P\in \sP\cup \sM$ and $r>0$, $\co{\sB}(P,r)=\ac{Q\in\sM,\; \ell(P,Q)> r}$. The elements of $\R^{k}$ with $k>1$ are denoted with bold letters, e.g.\ $\gx=(x_{1},\ldots,x_{k})$ and $\bs{0}=(0,\ldots,0)$. For $\gx\in\R^{k}$, $|\gx|_{\infty}=\max_{i\in\{1,\ldots,k\}}|x_{i}|$ while $\ab{\gx}$ denotes the Euclidean norm of $\gx$. The inner product of $\R^{k}$ is denoted by $\scal{\cdot}{\cdot}$ and the closed Euclidean ball centered at $\gx$ with radius $r\ge 0$ by $\cB(\gx,r)$. By convention $\inf_{\vide}=+\infty$ unless otherwise specified. We write $f\equiv c$ when a function $f$ is constant and equals $c$ on its domain. For all suitable functions $f$ on $(E^{n},\cE\on)$, $\E\cro{f(\bsX)}$ means $\int_{E^{n}}fd\gP\et$ while for $f$ on $(E,\cE)$, $\E_{S}\cro{f(X)}$ denotes the integral $\int_{E}fdS$ with respect to the measure $S$ on $(E,\cE)$. For $j\in [1,+\infty)$, we denote by $\sL_{j}(E,\cE,\mu)$, the set of measurable functions $f$ on $(E,\cE)$ such that $\norm{f}_{j,\mu}=[\int_{E}|f|^{j}d\mu]^{1/j}<+\infty$ while $\norm{f}_{\infty}=\sup_{x\in E}|f(x)|$ is the supremum norm of a function $f$ on $E$. If $\pi'$ is a distribution on $(\sM,\cA)$, $Q\sim \pi'$ means that $Q$ is a random variable with distribution $\pi'$. Finally, all the measures that we consider are implicitly assumed to be $\sigma$-finite.

\section{Construction of the posterior distribution}\label{sect-CPD}
Throughout this section, the model $(\sM,\pi)$ is assumed to be fixed. 

\subsection{The properties of our loss functions}
The construction of the posterior not only depends on the prior $\pi$ but also on the choice of the loss function. We first assume that $\ell$ satisfies some basic properties which are described below.
\begin{ass}\label{Hypo-0}
For all $S\in\sP\cup\sM$, the mapping 
\[
\map{\ell(S,\cdot)}{(\sM,\cA)}{\R_{+}}{P}{\ell(S,P)}
\]
is measurable.
\end{ass}
Under such an assumption, $\ell$-balls are measurable and the quantities $\pi(\sB(P,r))$ for $P\in\sP\cup\sM$ and $r>0$ are therefore well-defined. 

\begin{ass}\label{Hypo-0bis}
There exists a positive number $\tau$ such that, for all $S\in\sP$ and $P,Q\in\sM$,
\begin{align}
\ell(S,Q)&\le \tau\cro{\ell(S,P)+\ell(P,Q)}\label{eq-triangle}\\
\ell(S,Q)&\ge \tau^{-1}\ell(P,Q)-\ell(S,P).\label{eq-triangleb}
\end{align}
\end{ass}
When $\ell$ is a genuine distance, inequalities~\eref{eq-triangle} and~\eref{eq-triangleb} are satisfied with $\tau=1$ since they correspond to the triangle inequality. When $\ell$ is the square of a distance, these inequalities are satisfied with $\tau=2$. 

Importantly, we assume that $\ell$ is associated with a family $\sT(\ell,\sM)=\ac{t_{(P,Q)},\; (P,Q)\in\sM^{2}}$ of test statistics on $(E,\cE)$ which possesses the properties below. We shall see in Section~\ref{sect-ACLF} that many classical loss functions (among which the total variation distance, the squared Hellinger distance, etc.) can be associated with families $\sT(\ell,\sM)$ satisfying the following assumptions. 
%
\begin{ass}\label{Hypo-1}
The elements $t_{(P,Q)}$ of $\sT(\ell,\sM)$ satisfy: 
\begin{listi}
\item\label{cond-1} 
The mapping 
\[
\map{t}{(E\times\sM\times\sM,\cE\otimes\cA\otimes\cA)}{\R}{(x,P,Q)}{t_{(P,Q)}(x)}
\]
is measurable. 

\item\label{cond-1b} For all $P,Q\in\sM$, $t_{(P,Q)}=-t_{(Q,P)}$.

\item\label{cond-2}  there exist positive numbers $a_{0},a_{1}$ such that, for all $S\in\sP$ and $P,Q\in\sM$,
%
\begin{equation}
\E_{S}\cro{t_{(P,Q)}(X)}\le a_{0}\ell(S,P)-a_{1}\ell(S,Q).
\label{eq-SPQ}
\end{equation}
%
\item\label{cond-4} For all $P,Q\in\sM$,
\[
\sup_{x\in E}t_{(P,Q)}(x)-\inf_{x\in E}t_{(P,Q)}(x)\le 1.
\]
\end{listi}
\end{ass}
Under assumption~\ref{cond-1b}, $t_{(P,P)}=0$ and we deduce from~\eref{eq-SPQ} that $ (a_{0}-a_{1})\ell(S,P)\ge 0$, hence that $a_{0}\ge a_{1}$ since $\ell$ is not constantly equal to 0. 

Some families $\sT(\ell,\sM)$ may satisfy the stronger 
\begin{ass}\label{Hypo-2}
Additionally to Assumption~\ref{Hypo-1}, there exists $a_{2}>0$ such that 
\begin{enumerate}[label=(\roman*)]
\addtocounter{enumi}{3}
\item \label{cond-3} for all {$S\in\sP$ and $P,Q\in\sM$},
\[
\Var_{S}\cro{t_{(P,Q)}(X)}\le a_{2}\cro{\ell(S,P)+\ell(S,Q)}.
\]
\end{enumerate}
\end{ass}

\subsection{Construction of the posterior}\label{subsect-MR}
Let $\sT(\ell,\sM)$ be a family of test statistics that satisfies our Assumption~\ref{Hypo-1} and let $\beta$ and $\lambda$ be two positive numbers such that
\begin{equation}
\lambda=(1+\cc)\beta\quad\text{with}\quad\cc>0 \:\text{ satisfying }\: \cc_{0}=(1+\cc)-\cc(a_{0}/a_{1})>0.
\label{eq-cc}
\end{equation}
We set 
\[
\gT(\bsX,P,Q)=\sum_{i=1}^{n}t_{(P,Q)}(X_{i})\quad \text{for all $P,Q\in\sM$}
\]
and define $\widetilde \pi_{\bsX}(\cdot|P)$ as the probability on $(\sM,\cA)$ with density
\[
\frac{d\widetilde \pi_{\bsX}(\cdot|P)}{d\pi}:Q\mapsto\frac{\exp\cro{ \lambda \gT(\bsX,P,Q)}}{\int_{\sM}\exp\cro{\lambda  \gT(\bsX,P,Q)}d\pi(Q)}.
\]
Then, for $P\in\sM$ we set
%
\begin{align*}
\gT(\bsX,P)&=\int_{\sM}\gT(\bsX,P,Q)d\widetilde \pi_{\bsX}(Q|P)\\
&=\int_{\sM}\gT(\bsX,P,Q)\frac{\exp\cro{\lambda \gT(\bsX,P,Q)}}{\int_{\sM}\exp\cro{\lambda  \gT(\bsX,P,Q)}d\pi(Q)}d\pi(Q).
\end{align*}
Finally, we define $\widehat \pi_{\bsX}$ as the posterior distribution  on $(\sM,\cA)$ with density 
\begin{equation}\label{def-piX}
\frac{d\widehat \pi_{\bsX}}{d\pi}:P\mapsto \frac{\exp\cro{-\beta \gT(\bsX,P)}}{\int_{\sM}\exp\cro{-\beta \gT(\bsX,P)}d\pi(P)}.
\end{equation}
Our Assumption~\ref{Hypo-1}-\ref{cond-1} ensures that $d\widetilde \pi_{\bsX}(\cdot|P)/d\pi$ is a measurable function of $(\bsX,P,Q)$ and $d\widehat \pi_{\bsX}/d\pi$ a measurable function of $(\bsX,P)$. 

The posterior distribution depends on our choice of $\beta$ and $\lambda$ (or equivalently $c$) even though we drop this dependency with the notation $\widehat \pi_{\bsX}$.


\subsection{Monte Carlo computation of functions of the posterior}
Even though we focus on the concentration properties of the posterior $\widehat \pi_{\bsX}$, one may alternatively be interested in some estimators derived from it. For example, estimators of the form 
\[
I=\int_{\sM}F(P)d\widehat \pi_{\bsX}(P)
\]
where $F$ is a real-valued $\pi$-integrable function on $(\sM,\cA)$. For typical choices of $F$, $I$ gives the expected mean, mode or median of the posterior whenever these quantities make sense. One may also choose $F:P\mapsto \1_{P\in\sB(P_{0},\eps)}$ with $P_{0}\in\sM$ and $\eps>0$ in order to compute the (posterior) probability that $\ell(P_{0}, \widehat P)$ is not larger than $\eps$ when $\widehat P\sim \widehat \pi_{\bsX}$. 

Interestingly, the integral $I$ can be approximated by Monte Carlo as follows. Assume that the prior $\pi$ admits a density of the form $C^{-1}\Pi$ with respect to a given probability measure $\frak{m}$, where $\Pi$ is a nonnegative $\frak{m}$-integrable function on $(\sM,\cA)$ and  $C=\int_{\sM}\Pi(P)d\frak{m}(P)>0$ a positive normalizing constant (that will not be involved in our calculation). Let $P_{1},\ldots,P_{N}$ be an $N$-sample with distribution $\frak{m}$ and for each $i\in\{1,\ldots,N\}$, $Q_{i}^{(1)},\ldots,Q_{i}^{(N')}$ an independent $N'$-sample with the same distribution. We may approximate $I$ by
\[
\widehat I_{N,N'}=\sum_{i=1}^{N}F(P_{i})\frac{\exp\cro{-\beta W_{i,N'}(P_{i})}\Pi(P_{i})}{\sum_{i'=1}^{N}\exp\cro{-\beta W_{i',N'}(P_{i'})}\Pi(P_{i'})}
\]
where for all $i\in\{1,\ldots,N\}$,
\[
W_{i,N'}(P_{i})=\sum_{j=1}^{N'}T(\bsX,P_{i},Q_{i}^{(j)})\frac{\exp\cro{\lambda T(\bsX,P_{i},Q_{i}^{(j)})}\Pi(Q_{i}^{(j)})}{\sum_{j'=1}^{N'}\exp\cro{\lambda T(\bsX,P_{i},Q_{i}^{(j')})}\Pi(Q_{i}^{(j')})}.
\]
It is then easy to check that, by the law of large numbers, 
\[
\lim_{N\to +\infty}\cro{\lim_{N'\to +\infty}\widehat I_{N,N'}}=I.
\]

\subsection{Connection with the classical Bayes posterior distribution}\label{sect-cwtcbpd}
The classical Bayes posterior turns out to be a particular case of the posterior-type ones introduced in Section~\ref{subsect-MR}. As we shall see now, they are associated with the Kullback--Leibler divergence loss. We recall that the Kullback--Leibler divergence $\ell(P,Q)=K(P,Q)$ between two probabilities $P,Q$ on $(E,\cE)$ is defined  by
\[
K(P,Q)=
\begin{cases}
\dps{\int_{E}\log\pa{\frac{dP}{dQ}}dP} &\text{when } P\ll Q; \\
+\infty & \text{otherwise.}
\end{cases}
\]
Let us consider now a family $\sM$ of probabilities that satisfy for some $a>0$ and suitable versions of their densities $dQ/dP$ the following inequalities:
\begin{equation}\label{eq-rvborne}
e^{-a}\le \frac{dP}{dQ}(x)\le e^{a}\quad \text{for all $x\in E$ and $P,Q\in\sM$}.
\end{equation}
It follows from Baraud~\citeyearpar{BY-TEST}[Proposition~12] that the families of functions
\begin{equation}\label{eq-famKull}
\sT(\ell,\sM)=\ac{t_{(P,Q)}=\frac{1}{2a}\log\pa{\frac{dQ}{dP}},\; P,Q\in\sM} 
\end{equation}
satisfies our Assumptions~\ref{Hypo-1} and~\ref{Hypo-2} with $a_{0}=a_{1}=1/(2a)$ and $a_{2}=2a/[\tanh(a/2)]$. Note that given $P,Q\in\sM$, $P\ne Q$, the test based on the sign of $t_{(P,Q)}$ is the classical likelihood ratio test between $P$ and $Q$.

If we apply the construction described in Section~\ref{subsect-MR} to the family $\sT(\ell,\sM)$ we obtain that for all $P,Q,P_{0}\in\sM$, 
\[
\gT(\bsX,P,Q)=\gT(\bsX,P_{0},Q)-\gT(\bsX,P_{0},P).
\]
For all $\lambda>0$, the density of $\widetilde \pi_{\bsX}(\cdot|P)$
\begin{align*}
Q\mapsto \frac{\exp\cro{ \lambda \gT(\bsX,P,Q)}}{\int_{\sM}\exp\cro{\lambda  \gT(\bsX,P,Q)}d\pi(Q)}=\frac{\exp\cro{ \lambda \gT(\bsX,P_{0},Q)}}{\int_{\sM}\exp\cro{\lambda  \gT(\bsX,P_{0},Q)}d\pi(Q)}
\end{align*}
is independent of $P$ and writing $\widetilde \pi_{\bsX}(\cdot)$ in place of $\widetilde \pi_{\bsX}(\cdot|P)$ we obtain that 
\begin{align*}
\gT(\bsX,P)&=\int_{\sM}\gT(\bsX,P,Q)d\widetilde \pi_{\bsX}(Q)\\
&=\int_{\sM}\gT(\bsX,P_{0},Q)d\widetilde \pi_{\bsX}(Q)-\gT(\bsX,P_{0},P)\\
&=C-\frac{1}{2a}\sum_{i=1}^{n}\log\pa{\frac{dP}{dP_{0}}}(X_{i})\text{ with }C=\int_{\sM}\gT(\bsX,P_{0},Q)d\widetilde \pi_{\bsX}(Q).
\end{align*}
Finally, the density of our posterior $\widehat \pi_{\bsX}$ at $P\in\sM$ is given by
\begin{align*}
\frac{d\widehat \pi_{\bsX}}{d\pi}(P)&=\frac{\exp\cro{-\beta \gT(\bsX,P)}}{\int_{\sM}\exp\cro{-\beta \gT(\bsX,P)}d\pi(P)}= \frac{\cro{\prod_{i=1}^{n}(dP/dP_{0})(X_{i})}^{\beta/(2a)}}{\int_{\sM}\cro{\prod_{i=1}^{n}(dP/dP_{0})(X_{i})}^{\beta/(2a)}d\pi(P)}.
\end{align*}
This is the density of the classical Bayes posterior when $\beta=2a$ while for other values of $\beta$ it is that of fractional Bayes ones. 

Nevertheless, in the present paper we restrict our study to loss functions that satisfy some triangle-type inequality -- see Assumption~\ref{Hypo-0bis}. This excludes the Kullback--Leibler divergence unless one is ready to make strong assumptions on the unknown distribution of the data, which we do not want to do here.   

\subsection{Some heuristics}
In this section, we present the basic ideas that underline our approach. In particular, we shall see how the estimation problem we want to solve is linked to the one of testing between two disjoint $\ell$-balls $\sB(P,r)$ and $\sB(Q,r)$ with $P,Q\in\sM$. 

In order to avoid unnecessary details, we assume here that we observe i.i.d.\ data $\etc{X}$ with distribution $P\et\in \sP$ and that we have at disposal a family $\sT(\ell,\sM)$ of functions  that satisfies our Assumption~\ref{Hypo-1}. In particular it follows from Assumption~\ref{Hypo-1}-\ref{cond-2} that
\begin{align*}
\E\cro{\frac{\gT(\bsX,P,Q)}{n}}=\frac{1}{n}\sum_{i=1}^{n}\E\cro{t_{(P,Q)}(X_{i})}\le a_{0}\ell(P\et,P)-a_{1}\ell(P\et,Q).
\end{align*}
The antisymmetric property required by Assumption~\ref{Hypo-1}-\ref{cond-1b} entails that 
\[
\gT(\bsX,P,Q)=-\gT(\bsX,Q,P)
\]
and leads to the lower bound 
\begin{align*}
\E\cro{\frac{\gT(\bsX,P,Q)}{n}}\ge a_{1}\ell(P\et,P)-a_{0}\ell(P\et,Q).
\end{align*}
Assuming for the sake of simplicity that $a_{0}=a_{1}=1$, these calculations show that $n^{-1}\gT(\bsX,P,Q)=n^{-1}\sum_{i=1}^{n}t_{(P,Q)}(X_{i})$ is an unbiased and consistent estimator of $\ell(P\et,P)-\ell(P\et,Q)$. In particular, if the two $\ell$-balls $\sB(P,r)$, $\sB(Q,r)$ are disjoint and $P\et$ belongs to one of them, the sign of $n^{-1}\gT(\bsX,P,Q)=n^{-1}\sum_{i=1}^{n}t_{(P,Q)}(X_{i})$ provides a consistent test for deciding which one contains $P\et$. In fact, the test does not depend on the value of $r$ and consequently chooses the element among $\{P,Q\}$ which is the closest to $P\et$ (with respect to $\ell$), at least when $n$ is large enough. As compared to the classical likelihood ratio test between $P$ and $Q$, this test has the advantage not to assume that $P\et$ is either $P$ or $Q$ but only that it lies in a small enough $\ell$-vicinity around one of these two probabilities. The test is said to be {\em robust} with respect to the model $\{P,Q\}$. Its nonasymptotic properties have been studied in Baraud~\citeyearpar{BY-TEST}.

Let us now explain how such families $\{\gT(\bsX,P,Q), (P,Q)\in\sM^{2}\}$ of test statistics can be used to build robust estimators and not only tests. In the frequentist paradigm, the construction of $\ell$-estimators is based on the following heuristics. If, with a probability close to 1,  $n^{-1}\gT(\bsX,P,Q)$ is close to its expectation $\ell(P\et,P)-\ell(P\et,Q)$ uniformly with respect to $(P,Q)\in\sM^{2}$ then $n^{-1}\gT'(\bsX,P)=\sup_{Q\in\sM}\cro{n^{-1}\gT(\bsX,P,Q)}$ is close to 
\[
\sup_{Q\in\sM}\cro{\ell(P\et,P)-\ell(P\et,Q)}=\ell(P\et,P)-\inf_{Q\in\sM}\ell(P\et,Q).
\]
We therefore expect that a minimizer over $\sM$ of the function $P\in\sM\mapsto n^{-1}\gT'(\bsX,P)$ be close to a minimizer over $\sM$ of the function $P\in\sM\mapsto \ell(P\et,P)-\inf_{Q\in\sM}\ell(P\et,Q)$, that is an element that minimizes the loss $\ell(P\et,P)$ among the probabilities $P\in\sM$.

In the Bayesian paradigm, we may argue in a similar way as follows. Replacing $n^{-1}\gT(\bsX,P,Q)$ by its expectation $\ell(P\et,P)-\ell(P\et,Q)$, as we did before, amounts to replacing $\gT(\bsX,\gP)$ by 
\begin{align*}
&\overline \gT(\bsX,\gP)\\
&=n\int_{\sM}\pa{\ell(P\et,P)-\ell(P\et,Q)}\frac{\exp\cro{n\lambda \pa{\ell(P\et,P)-\ell(P\et,Q)}}d\pi(Q)}{\int_{\sM}\exp\cro{n\lambda  \pa{\ell(P\et,P)-\ell(P\et,Q)}}d\pi(Q)}\\
&=n\ell(P\et,P)-n\int_{\sM}\ell(P\et,Q)\frac{\exp\cro{-n\lambda\ell(P\et,Q)}}{\int_{\sM}\exp\cro{-n\lambda\ell(P\et,Q)}d\pi(Q)}d\pi(Q).
\end{align*}
Note that the second term in the right-hand side does not depend on $P$. Consequently, replacing $\gT(\bsX,\gP)$ by $\overline \gT(\bsX,\gP)$ 
in the expression \eref{def-piX} of the density of $\widehat \pi_{\bsX}$ leads to the density    
\begin{align*}
P\mapsto  \frac{\exp\cro{-\beta \overline \gT(\bsX,P)}}{\int_{\sM}\exp\cro{-\beta \overline \gT(\bsX,P)}d\pi(P)}=\frac{\exp\cro{-n\beta \ell(P\et,P) }}{\int_{\sM}\exp\cro{-n\beta \ell(P\et,P)}d\pi(P)}.
\end{align*}
We recognize here the density of a Gibbs measure associated with the energy $\ell(P\et,P)$ at point $P\in\sM$ and inverse temperature $n\beta>0$. We know that when the temperature goes to 0 (or equivalently $n\beta$ to infinity), Gibbs measures concentrate their masses in vicinities of low energy points in $\sM$. In our case, these low energy points are those for which $\ell(P\et,P)$ is minimal.  

Similar ideas can be found in Catoni's work and more specifically in his construction of Gibbs estimators -- see Catoni~\citeyearpar{Catoni04}[Chapter 4]. There, Catoni shows how to aggregate a continuous family of estimators in order to minimize a risk. In the present paper, we do not aim at aggregating estimators but we use similar ideas and tools that are due to Catoni and his co-authors for the construction of our robust posterior distribution. 

\section{The main results}\label{sect-main}
\subsection{Linking the prior to the complexity of the model}\label{sect-iotp}
For $P\in\sM$ and $r>0$, we recall that 
\[
V(P,r)=\log\pa{\frac{\pi(\sB(P,2r)}{\pi(\sB(P,r)}}
\]
where we use the convention $a/0=+\infty$ for all $a\ge 0$. We said in the Introduction that such quantities
encapsulate in some sense the complexity of the model $(\sM,\pi)$ and we shall now explain why. If $\sM=\{P_{\gtheta},\; \gtheta\in \R^{k}\}$ is a parametric model endowed with a loss $\ell$ such that $\ell(\gtheta,\gtheta')=\ab{\gtheta-\gtheta'}$, so that $(\sM,\ell)$ is isometric to $(\R^{k},\ab{\cdot})$, and if the prior $\nu$ on $\Theta=\R^{k}$ is improper and given by the Lebesgue measure, we obtain that for all $P\in\sM$ and $r>0$
\begin{align}\label{eq-dim}
V(P,r)&=\log\pa{\frac{\pi(\sB(P,2r))}{\pi(\sB(P,r))}}=\log\pa{\frac{(2r)^{k}}{r^{k}}}=k\log 2
\end{align}
and $V(P,r)$ corresponds thus to the usual dimension of $\R^{k}$ (up the factor $\log 2$). For more general models $(\sM,\pi)$ and loss functions $\ell$, we may interpret $V(P,r)$ as some notion of dimension (or complexity) associated with the element $P\in\sM$ at the scale $r>0$. 
As we do not consider improper priors but probability distributions,  $\lim_{r\to +\infty}\pi(\sB(P,r))=1$ and consequently $\lim_{r\to +\infty}V(P,r)=0$. This means that the connection with the notion of ``dimension'' is only relevant for values of $r$ which are not too large.

Given $\gamma\in (0,1]$, the set 
\[
\cR(\beta,P)=\ac{\ray\ge \frac{1}{n\beta a_{1}},\text{ such that } \sup_{r'\ge r}\frac{V(P,r')}{r'}\le \gamma n\beta a_{1}}
\]
is the subinterval of $\R_{+}$ on which the mapping $r\mapsto V(P,r)$ is not larger than $r\mapsto \gamma n\beta a_{1}\ray$. We denote by 
\begin{equation}
\ray_{n}(\beta,P)=\inf\cR(\beta,P)
\label{df-epsn}
\end{equation}
the left endpoint of $\cR(\beta,P)$. Since $\cR(\beta,P)$ is increasing with $\beta$ with respect to set inclusion, $\ray_{n}(\beta,P)$ is a nonincreasing function of $\beta$. For example, in the ideal situation given in \eref{eq-dim} where $V(P,r)\equiv k\log 2$ with $k\log 2\ge 1$, $\ray_{n}(\beta,P)=(\gamma a_{1})^{-1}[k\log 2/(n\beta)]$. When the model $\sM=\{P_{\gtheta},\; \gtheta\in \Theta\}$ is parametric and the parameter space $\Theta$ is an open subset of $\R^{k}$ endowed with a prior $\nu$, we shall see in Section~\ref{sect-BrnA} that under suitable assumptions $\ray_{n}(\beta,P_{\gtheta})$ is indeed of order $k/(n\beta)$, at least for $n$ sufficiently large.

The Bayesian paradigm offers the possibility to favour some elements of $\sM$ as compared to others. The order of magnitude of $\ray_{n}(\beta,P)$ allows one to quantify how much the prior $\pi$ advantages or disadvantages $P\in\sM$. It follows from the definition of $\ray_{n}(\beta,P)$ that 
\begin{align}
0<\pi\pa{\sB(P,2\ray)}\le \exp\pa{\gamma n\beta a_{1}\ray}\pi\pa{\sB(P,\ray)}\quad\text{for all }\ray>\ray_{n}(\beta,P).
\label{prop-epsn}
\end{align}
Letting $\ray$ decrease to $\ray_{n}(\beta,P)$, we derive that \eref{prop-epsn} {also} holds for $\ray=\ray_{n}(\beta,P)$.  In particular,  $\pi\pa{\sB(P,r)}>0$ for $\ray=\ray_{n}(\beta,P)$. If the prior puts no mass on the $\ell$-ball $\sB(P,\ray)$, which clearly corresponds to a situation where the prior disadvantages $P$, $\ray_{n}(\beta,P)>\ray$ and $\ray_{n}(\beta,P)$ is therefore large if $\ray$ is large. In the opposite case, if the prior puts enough mass on  $\sB(P,\ray)$ in the sense that 
\begin{equation}\label{eq-mass}
\pi\pa{\sB(P,\ray)}\ge \exp\pa{-\gamma n\beta a_{1}\ray},
\end{equation}
then for all $\ray'\ge \ray$, 
\begin{align*}
\pi\pa{\sB(P,\ray')}&\ge \exp\pa{-\gamma n\beta a_{1}\ray}\ge \exp\pa{-\gamma n\beta a_{1}\ray'}\\
&\ge  \exp\pa{-\gamma n\beta a_{1}\ray'}\pi\pa{\sB(P,2\ray')}
\end{align*}
hence, 
\[
\frac{\pi\pa{\sB(P,2\ray')}}{\pi\pa{\sB(P,\ray')}}\le \exp\pa{\gamma n\beta a_{1}\ray'}\quad \text{and $\ray_{n}(\beta,P)\le \ray$.}
\]
The quantity $\ray_{n}(\beta,P)$ is therefore small if $\ray$ is small. Although~\eref{eq-mass} is not equivalent to~\eref{prop-epsn} (it is actually stronger), the previous arguments provide a partial view on the relationship between $\pi$ and $\ray_{n}$ and conditions to decide whether $P$ is favoured by $\pi$ or not, according to the size of $\ray_{n}(\beta,P)$.

\subsection{A general result on the concentration property of the posterior distribution}\label{ssect-thm1}
According to the discussion of Section~\ref{sect-iotp}, we see that, when the set 
\begin{equation}\label{def-Mbeta}
\sM(\beta)=\ac{P\in\sM,\; \ray_{n}(\beta,P)\le a_{1}^{-1}\beta}
\end{equation}
is nonempty, it contains the most favoured elements of the model $(\sM,\pi)$ at level $a_{1}^{-1}\beta$. Since $\ray_{n}(\beta,P)$ is nonincreasing with $\beta$, the set $\sM(\beta)$ is increasing with $\beta$ with respect to set inclusion. If $a_{1}^{-1}\beta\ge(n\beta a_{1})^{-1}$ or equivalently $\beta\ge 1/\sqrt{n}$, the set $\sM(\beta)$ can alternatively be defined from $V(P,r)$ as follows:
\begin{align}
\sM(\beta)&=\ac{P\in\sM,\; V(P,r)\le \gamma n\beta a_{1}r \text{ for all }\ray\ge a_{1}^{-1}\beta}.\label{def-Mbetab}
\end{align}
%
This set plays a crucial role in our first result.

\begin{thm}\label{main1}
Assume that the model $(\sM,\pi)$ and the loss $\ell$ satisfy Assumptions~\ref{Hypo-0} and~\ref{Hypo-0bis} and the family $\sT(\ell,\sM)$ Assumption~\ref{Hypo-1}. Let $\gamma<(c_{0}\wedge\cc)/(2\tau)$ and $\beta\ge 1/\sqrt{n}$ be chosen in such a way that the set $\sM(\beta)$ defined by~\eref{def-Mbeta} is not empty. Then, the posterior $\widehat \pi_{\bsX}$ defined by~\eref{def-piX} possesses the following property. There exists $\kappa_{0}>0$ only depending on $\cc,\tau,\gamma$ and the ratio $a_{0}/a_{1}$ such that, for all $\xi>0$ and any distribution $\gP\et$ with marginals in $\sP$,  
\begin{align}
\E\cro{\widehat \pi_{\bsX}\pa{\co{\sB}(\overline P\et,\kappa_{0}\ray)}}\le 2e^{-\xi}\label{eq-thm01}
\end{align}
with 
\begin{equation}\label{eq-main1-ray}
\ray=\inf_{P\in\sM(\beta)}\ell(\overline P\et,P)+\frac{1}{a_{1}}\pa{\beta+\frac{2\xi}{n\beta}}.
\end{equation}
In particular, 
\begin{align*}
\P\cro{\widehat \pi_{\bsX}\pa{\co{\sB}(\overline P\et,\kappa_{0}\ray)}\ge e^{-\xi/2}}\le 2e^{-\xi/2}.
\end{align*}
\end{thm}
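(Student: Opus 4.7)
The plan is a PAC-Bayesian concentration argument tailored to the nested Gibbs structure of~\eref{def-piX}, in which $\widehat\pi_{\bsX}$ is built from the functional $\gT(\bsX,P)$ that is itself a Gibbs average of $\gT(\bsX,P,Q)$ against $\widetilde\pi_{\bsX}(\cdot|P)$. Writing the posterior mass as the ratio
\[
\widehat\pi_{\bsX}\pa{\co{\sB}(\overline P\et,\kappa_0\ray)} = \frac{N(\bsX)}{D(\bsX)}, \quad N = \int_{\co{\sB}(\overline P\et,\kappa_0\ray)} e^{-\beta\gT(\bsX,P)}\,d\pi(P), \quad D = \int_{\sM} e^{-\beta\gT(\bsX,P)}\,d\pi(P),
\]
I choose a reference point $P_0 \in \sM(\beta)$ almost achieving the infimum in~\eref{eq-main1-ray} and use the scale $\ray_0 = a_1^{-1}\beta \ge \ray_n(\beta,P_0)$, so that the doubling bound~\eref{prop-epsn} is available around $P_0$. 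The target is to show that the numerator is exponentially small because $\gT(\bsX,P)$ is forced to be large on the bad set, while the denominator is at least $\pi(\sB(P_0,\ray_0))$ times a controlled factor.

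The deviation ingredient is Hoeffding's lemma applied to the bounded variables $t_{(P,Q)}(X_i)$, whose oscillation is at most $1$ by Assumption~\ref{Hypo-1}-\ref{cond-4}: $\E\cro{\exp(\mu(\gT(\bsX,P,Q) - \E\cro{\gT(\bsX,P,Q)}))} \le \exp(n\mu^2/8)$ for any $\mu \in \R$. Together with the mean bound $\E\cro{\gT(\bsX,P,Q)} \le n\cro{a_0\ell(\overline P\et,P) - a_1\ell(\overline P\et,Q)}$, which follows from~\eref{eq-SPQ} because $S \mapsto \E_S\cro{t_{(P,Q)}}$ is linear in the measure and $\overline P\et \in \sP$, and with the Donsker--Varadhan identity
\[
\lambda\gT(\bsX,P) = \log\int_{\sM} e^{\lambda\gT(\bsX,P,Q)}\,d\pi(Q) + K\pa{\widetilde\pi_{\bsX}(\cdot|P),\pi},
\]
I can turn the pointwise Hoeffding bound into two-sided control of $\gT(\bsX,P)$, with a Markov step absorbed into the $e^{-\xi}$ probability budget. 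For the numerator, I lower-bound $\gT(\bsX,P)$ on $\co{\sB}(\overline P\et,\kappa_0\ray)$ by specialising the Donsker--Varadhan test measure to $\pi$ restricted to $\sB(P_0,\ray_0)$: the entropic penalty $-\log\pi(\sB(P_0,\ray_0))/\lambda$ is controlled by~\eref{prop-epsn} to be linear in $\beta$, while the mean contribution produces $c_0\cdot n a_1 \ell(\overline P\et,P)$ after passing between $\ell(P_0,\cdot)$ and $\ell(\overline P\et,\cdot)$ via~\eref{eq-triangle}--\eref{eq-triangleb}. For the denominator, the reverse Hoeffding inequality applied on $\sB(P_0,\ray_0)$ gives $D \ge \pi(\sB(P_0,\ray_0))\exp(-\beta\cdot C n\beta)$ on the same event.

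Combining the two estimates yields $N/D \le \exp(-c\,n\beta a_1(\kappa_0\ray - C'(\beta + \xi/(n\beta))))$ up to a prior-ratio factor itself absorbed by~\eref{prop-epsn}; choosing $\kappa_0$ large enough, depending only on $c$, $\tau$, $\gamma$, and $a_0/a_1$, turns this into the $2e^{-\xi}$ bound of~\eref{eq-thm01}, after which Markov's inequality applied to the random variable $\widehat\pi_{\bsX}(\co{\sB}(\overline P\et,\kappa_0\ray))$ delivers the ``in particular'' statement. The main obstacle is the nested Gibbs structure: both the upper and lower bounds on $\gT(\bsX,P)$ require a second layer of PAC-Bayes / Donsker--Varadhan, and the two exponential parameters $\lambda = (1+c)\beta$ and $\beta$ must be calibrated so that the positive contribution $a_0\ell(\overline P\et,P)$ (which opposes us on the bad set, having a positive sign in~\eref{eq-SPQ}) is dominated by the favourable $a_1\ell(\overline P\et,Q)$ contribution arising after the $Q$-integration; the condition $c_0 = (1+c) - c\cdot a_0/a_1 > 0$ in~\eref{eq-cc} is precisely what makes this cancellation, and hence the whole argument, work.
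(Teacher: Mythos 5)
Your proposal identifies the correct high-level ingredients (the Hoeffding deviation bound for the bounded increments $t_{(P,Q)}(X_i)$, the mean bound~\eref{eq-SPQ} applied at $S=\overline P\et$, the prior-mass doubling condition~\eref{prop-epsn} around a reference point $P_0\in\sM(\beta)$, the triangle inequalities~\eref{eq-triangle}--\eref{eq-triangleb}, and the crucial role of $c_0>0$), and the decomposition into a numerator $N$ and denominator $D$ is indeed the right skeleton. However, the Donsker--Varadhan route you sketch has genuine gaps where the paper relies on a different mechanism.

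The main gap is the \emph{denominator}. You invoke a ``reverse Hoeffding inequality'' to get a high-probability lower bound on $D$. But $\gT(\bsX,P)=\int\gT(\bsX,P,Q)\,d\widetilde\pi_\bsX(Q|P)$ is a Gibbs average against the data-dependent measure $\widetilde\pi_\bsX(\cdot|P)$, which systematically upweights the $Q$ where $\gT$ is large. Donsker--Varadhan gives $\lambda\gT(\bsX,P)=\log\int e^{\lambda\gT}\,d\pi+K\pa{\widetilde\pi_\bsX(\cdot|P),\pi}$ with $K\ge0$; dropping $K$ yields a \emph{lower} bound on $\gT(\bsX,P)$ (good for the numerator) but says nothing from above. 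There is no easy Hoeffding step that controls $\int_{\sB}\gT(\bsX,P)\,d\pi_\sB(P)$ from above, because the weights are random and adversarial. The paper bypasses this entirely: it never bounds $\gT(\bsX,P)$ from above, but instead controls $\E\cro{1/\int e^{-\beta\gT}\,d\pi}$ directly via the Audibert--Catoni reciprocal inequality (Lemma~\ref{lem-Catoni}, used twice in Lemma~\ref{Denom}) combined with the convexity step $\E\cro{e^{\beta\gT(\bsX,P)}}\le\E\cro{\int e^{\beta\gT(\bsX,P,Q)}\,d\widetilde\pi_\bsX(Q|P)}$, which collapses the nested Gibbs average to a ratio of prior integrals. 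This is the proof's main technical lever, and your sketch offers no substitute.

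Two further points. First, your claim that ``the entropic penalty $-\log\pi(\sB(P_0,\ray_0))/\lambda$ is controlled by~\eref{prop-epsn} to be linear in $\beta$'' is incorrect: \eref{prop-epsn} bounds the \emph{ratio} $\pi(\sB(P_0,2\ray))/\pi(\sB(P_0,\ray))$, not the mass $\pi(\sB(P_0,\ray_0))$ itself, which can be arbitrarily small. The paper never lower-bounds $\pi(\sB)$; instead, the choice of the Markov threshold $z$ in~\eref{def-z} and the summation Lemma~\ref{lem-som} are calibrated so that all $\pi(\sB)$ factors cancel exactly in the final combination~\eref{eq-thm01-fin}. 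Your DV route produces fractional powers $\pi(\sB)^{\beta/\lambda}$ that do not cancel as cleanly against the denominator. Second, the integration over the bad set requires the dyadic-shell summation of Lemma~\ref{lem-som} (decomposing $\co\sB(\PAP,2^J\ray)$ into shells $\sB(\PAP,2^{j+1}\ray)\setminus\sB(\PAP,2^j\ray)$ and using~\eref{prop-epsn} geometrically), which is absent from your sketch. Finally, the passage from bounds on $N$ and $D$ separately to a bound on $\E\cro{N/D}$ is not ``combining the two estimates'' pointwise: the paper splits on the event $A=\{D>z\}$, giving $\E\cro{N/D}\le\P(\co A)+z^{-1}\E\cro{N}$, which is the precise mechanism you gesture at with ``a Markov step absorbed into the $e^{-\xi}$ budget.''
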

The value of $\kappa_{0}$ is given by \eref{def-kappa0} in the proof. It only depends on the choice of the family $\sT(\ell,\sM)$ but not on the prior $\pi$. Hence, for a given family $\sT(\ell,\sM)$, $\kappa_{0}$ is a numerical constant. 

Let us now comment on Theorem~\ref{main1}. When $\etc{X}$ are truly i.i.d.\ with distribution $P\et$ and the prior puts enough mass around $P\et$, in the sense that $P\et\in\sM(\beta)$, then $r=a_{1}^{-1}[\beta+2\xi/(n\beta)]$ in \eref{eq-main1-ray}. When this ideal situation is not met, either because the data are not identically distributed or because $P\et$ does not belong to $\sM(\beta)$, $r$ increases by at most an additive term of order $\inf_{P\in\sM(\beta)}\ell(\overline P\et,P)$. When this approximation term remains small as compared to $a_{1}^{-1}\beta$, the value of $r$ does not deteriorate too much as compared to the previous situation. 

The value of $\ray$ given by~\eref{eq-main1-ray} depends on the choice of the parameter $\beta$. Since the set $\sM(\beta)$ is increasing (with respect to set inclusion) as $\beta$ gets larger, the two terms $\inf_{P\in\sM(\beta)}\ell(\overline P\et,P)$ and $a_{1}^{-1}\beta$ vary in opposite directions as $\beta$ increases. The set $\sM(\beta)$ must be large enough to provide a suitable approximation of $\overline P\et$ while $\beta$ must not be too large in order to keep $a_{1}^{-1}\beta$ to a reasonable size. Practically, we recommend to choose $\beta=\beta(\alpha)\ge 1/\sqrt{n}$ such that 
\begin{equation}\label{def-betaquantile}
\pi\pa{\sM(\beta)}\ge 1-\alpha \quad \text{for $\alpha\in (0,1/10)$.}
\end{equation}
In Example~\ref{ex-euclidien} below and in Section~\ref{sect-trans00}, we give some examples of choices of $\beta$.
\begin{exa}\label{ex-euclidien}
Let $(\sM,\pi)$ be a model where the prior $\pi$ satisfies for some $k\ge 1$ and constants $0<A\le (2/e)B$,
\begin{equation}\label{eq-ex-euclidien}
\pa{A\ray}^{k}\wedge 1\le \pi\pa{\sB(P,\ray)}\le \pa{B\ray}^{k}\wedge 1\quad \text{for all $P\in\sM$ and $\ray>0$.}
\end{equation}
This means that the prior $\pi$ behaves like the Lebesgue measure on an Euclidean space of dimension $k$ for small enough values of $r$. 
Then, 
\begin{equation}\label{eq-epsP}
V(P,r)=\log \frac{\pi\pa{\sB(P,2\ray)}}{ \pi\pa{\sB(P,\ray)}}\le  k\log \pa{\frac{2B}{A}}\quad\text{for all $P\in\sM$ and $r>0$}
\end{equation}
which implies that for all $P\in\sM$
\begin{equation}\label{eq-controlpara}
r_{n}(P,\beta)\le \frac{k}{\gamma a_{1} n \beta}\log\pa{\frac{2B}{A}}.
\end{equation}
The right-hand side is not larger than $a_{1}^{-1}\beta$ for  
%
\begin{equation}\label{eq-choixbeta1}
\beta=\sqrt{\frac{k\log(2B/A)}{\gamma n }}
\end{equation}
which is larger than $1/\sqrt{n}$ since $(2B/A)\ge e$ and $\gamma\in (0,1]$. For such a value of $\beta$, which does not depend on the distribution of the data, the element $P$ belongs to $\sM(\beta)$ by ~\eref{def-Mbetab}, and since $P$ is arbitrary we derive that $\sM(\beta)=\sM$. Applying Theorem~\ref{main1} we conclude that the distribution $\widehat \pi_{\bsX}$ concentrates on an $\ell$-ball centered at $\overline P\et$ with a radius $\ray$ of order 
\begin{equation}\label{eq-ex-euclidien01}
\overline \ray_{n}=\inf_{P\in\sM}\ell(\overline P\et,P)+\frac{1}{a_{1}}\pa{\sqrt{\frac{k}{n}}+\frac{2\xi}{\sqrt{nk}}}.
\end{equation}
 \end{exa}

\subsection{A refined result under Assumption~\ref{Hypo-2}}\label{sect-thm2}
In this section, we assume that the family $\sT(\ell,\sM)$ satisfies the stronger Assumption~\ref{Hypo-2}.

Let us first introduce the mapping
\begin{equation}\label{def-phi}
\map{\phi}{(0,+\infty)}{\R_{+}}{z}{\dps{\phi(z)=\frac{2\pa{e^{z}-1-z}}{z^{2}}}.}
\end{equation}
The function $\phi$ is increasing on $(0,+\infty)$ and tends to $1$ when $z$ tends to 0. Given $\beta>0$ and a family $\sT(\ell,\sM)$ that satisfies Assumption~\ref{Hypo-2}, we define 
\begin{align}
\overline\cc_{1}&=\cc_{0}-\beta a_{2}a_{1}^{-1}\tau^{2}\phi\cro{\beta(1+2\cc)}(1+2\cc(1+\cc));\label{def-delta1}\\
\overline \cc_{2}&= \cc-\beta a_{2}a_{1}^{-1}\tau^{2}\phi\cro{\beta(1+2\cc)}\cc^{2};\label{def-delta2}\\
\overline\cc_{3}&=(2+\cc) -\beta a_{2}a_{1}^{-1}\tau^{2}\phi\cro{\beta(3+2\cc)}(2+\cc)^{2}.\label{def-delta3}
\end{align}
Note that the value of $\overline\cc_{1}\wedge \overline\cc_{2} \wedge \overline\cc_{3}$ is positive for $\beta=0$ and decreases continuously to $-\infty$ when $\beta$ grows to infinity. Consequently, there exists some $\beta_{0}>0$ for which  $\overline\cc_{1}\wedge \overline\cc_{2} \wedge \overline\cc_{3}=0$ and $\overline\cc_{1}\wedge \overline\cc_{2} \wedge \overline\cc_{3}$ is positive for all values $\beta\in (0,\beta_{0})$. 

Let us now present our second result on the concentration property of our posterior $\widehat \pi_{\bsX}$.
\begin{thm}\label{main2}
Assume that the model $(\sM,\pi)$ and the loss $\ell$ satisfy Assumption~\ref{Hypo-0} and~\ref{Hypo-0bis} and the family $\sT(\ell,\sM)$ Assumption~\ref{Hypo-2}. For $\beta\in (0,\beta_{0})$ and $\gamma<(\overline\cc_{1}\wedge \overline\cc_{2} \wedge \overline\cc_{3})/(2\tau)$, the posterior $\widehat \pi_{\bsX}$ defined by~\eref{def-piX} satisfies the following property. There exists $\kappa_{0}>0$ only depending on $a_{0}/a_{1},a_{2}/a_{1},\cc,\tau,\beta$ and $\gamma$ such that, for all $\xi>0$ and any distribution $\gP\et$ with marginals in $\sP$,
\begin{align}\label{eq-thm02}
\E\cro{\widehat \pi_{\bsX}\pa{\co{\sB}(\overline P\et,\kappa_{0}\ray)}}\le 2e^{-\xi}
\end{align}
with 
\begin{equation}\label{eq-raythm2}
\ray=\inf_{P\in\sM}\cro{\ell(\overline P\et,P)+\ray_{n}(\beta,P)}+\frac{2\xi}{n\beta a_{1}}.
\end{equation}
In particular, 
\begin{align*}
\P\cro{\widehat \pi_{\bsX}\pa{\co{\sB}(\overline P\et,\kappa_{0}\ray)}\ge e^{-\xi/2}}\le 2e^{-\xi/2}.
\end{align*}
\end{thm}
The value of $\kappa_{0}$ is given by \eref{def-Km0} in the proof.
Note that the constraints on $\beta$ and $\gamma$, that are required in our Theorem~\ref{main2}, and that on $\cc$ given in \eref{eq-cc} 
only depend on $a_{0},a_{1}$ and $a_{2}$, hence on the choice of the family $\sT(\ell,\sM)$. When $a_{0},a_{1}$ and $a_{2}$ do not depend on $\sM$, the value of $\beta$ can be chosen as a universal constant. In particular, it neither depends on the model $(\sM,\pi)$ nor on the sample size $n$.  

\begin{exa}[Example~\ref{ex-euclidien} continued]\label{ex-euclidien2}
Let us go back to the framework of our Example~\ref{ex-euclidien} and assume that $\sT(\ell,\sM)$ satisfies the requirements of Theorem~\ref{main2}, hence Assumption~\ref{Hypo-2}. Applying our construction with some numerical value of $\beta$ which satisfies the constraint of our Theorem~\ref{main2},  we deduce from~\eref{eq-controlpara} that $\widehat \pi_{\bsX}$ concentrates on an $\ell$-ball with radius of order 
\begin{equation}\label{eq-ex-euclidien02}
\overline \ray=\inf_{P\in\sM}\ell(\overline P\et,P)+\frac{\log(2B/A)}{\gamma a_{1} \beta} \frac{k}{n}+\frac{2}{a_{1}\beta}\frac{\xi}{n}
\end{equation}
When the model is well-specified, $\inf_{P\in\sM}\ell(\overline P\et,P)=0$ and $\ray=\ray(n)$ contracts thus at the rate $1/n$. Applying our Theorem~\ref{main1} under Assumption~\ref{Hypo-1}, ignoring the fact that the family $\sT(\ell,\sM)$ also satisfies Assumption~\ref{Hypo-2}, would lead to the weaker result that the posterior concentrates on an $\ell$-ball with radius of order $\inf_{P\in\sM}\ell(\overline P\et,P)+\sqrt{k/n}$ as shown by~\eref{eq-ex-euclidien01}.
\end{exa}

\subsection{Concentrated priors}
Theorem~\ref{main1} and \ref{main2} show that starting from a prior $\pi$ that puts enough mass around most of the elements of $\sM$, the posterior $\widehat \pi_{\bsX}$ concentrates on an $\ell$-ball with radius of order $\inf_{P\in\sM}\ell(\overline P\et,P)+r_{n}$ where $r_{n}$ is small, at least under suitable assumptions and for $n$ sufficiently large. The situation we want to investigate now is what happens when the prior is very concentrated on a small $\ell$-ball with radius $\eps>0$ around an element $\PAP\in \sM$ that might not be the true distribution of the data. More precisely, assume the following 
\begin{ass}\label{ass-dege}
For $\PAP\in\sM$ and $\eps>0$, 
\[
\pi\pa{\co{\sB}(\PAP,\eps)}\le e^{-(2\xi+1)}\pi\pa{\sB(\PAP,\eps)}\quad \text{with $\xi>0$.}
\]
\end{ass}

In this case, we establish the following result.
\begin{thm}\label{main1b}
Assume that the model $(\sM,\pi)$ and the loss $\ell$ satisfy Assumption~\ref{Hypo-0} and~\ref{Hypo-0bis} and the family $\sT(\ell,\sM)$ Assumption~\ref{Hypo-1}. If Assumption~\ref{ass-dege} is satisfied, there exists $\kappa_{0}>0$ only depending on $\cc,\tau$ and the ratio $a_{0}/a_{1}$ such that for any distribution $\gP\et$ with marginals in $\sP$,  
\begin{align}
\E\cro{\widehat \pi_{\bsX}\pa{\co{\sB}(\overline P\et,\kappa_{0} \ray)}}\le 2e^{-\xi}\quad \text{with}\quad \ray=\ell(\overline P\et,\PAP)\vee \frac{\beta}{a_{1}}\vee \eps.
\end{align}
In particular, for the choice $\beta=a_{1} \eps$, $r=\ell(\overline P\et,\PAP)\vee \eps$.

If furthermore, Assumption~\ref{Hypo-2} is satisfied and $\beta\in (0,\beta_{0})$ (where $\beta_{0}$ is defined in Section~\ref{sect-thm2}), there exists $\kappa_{0}'>0$ only depending on $\tau,\beta, a_{0}/a_{1}$ and $a_{2}/a_{1}$ such that for any distribution $\gP\et$ with marginals in $\sP$, 
\begin{align}
\E\cro{\widehat \pi_{\bsX}\pa{\co{\sB}(\overline P\et,\kappa_{0}' \ray)}}\le 2e^{-\xi}\quad \text{with}\quad \ray=\ell(\overline P\et,\PAP)\vee \eps.
\end{align}
\end{thm}
This result shows that for a suitable choice of $\beta$, the posterior $\widehat \pi_{\bsX}$ also concentrates on an $\ell$-ball centred at $\overline P\et$ with radius of order $\eps$ when the model is well-specified, that is, when the data are i.i.d.\ with distribution $\overline P\et=\PAP$. When the model is misspecified, the radius of the ball is of order $\ell(\overline P\et,\PAP)\vee \eps$ and therefore does not inflate more than the distance of $\overline P\et$ to the center $\PAP$. This result illustrates the stability of the posterior  $\widehat \pi_{\bsX}$ with respect to misspecification.  

\section{Applications to classical loss functions}\label{sect-ACLF}
The aim of this section is to show how our general construction can be applied to loss functions $\ell$ of interest. The propositions contained in this section about the corresponding families $\sT(\ell,\sM)$ have been established in Baraud~\citeyearpar{BY-TEST} except for the squared Hellinger loss for which we refer to Baraud and Birg\'e~\citeyearpar{BarBir2018}[Proposition~3]. The list of loss functions we present here is not exhaustive. Our results also apply to all loss functions that derive from a variational formula of the form
\[
\ell(P,Q)=\sup_{f\in\sF}\cro{\int_{E}fdP-\int_{E}fdQ}
\]
where $\sF$ is a suitable class of bounded functions. For such losses, we refer the reader to Baraud~\citeyearpar{BY-TEST}. 

In this section, we consider models $\sM=\{P=p\cdot\mu, p\in\cM\}$ which are dominated by a measure $\mu$ on $(E,\cE)$ and we denote by $\cM\subset \sL_{1}(E,\cE,\mu)$ the corresponding families of densities with respect to $\mu$. Elements $P,Q,\ldots$ in $\sM$ are associated with their densities in $\cM$ by using lower case letters $p,q,\ldots$. In all the cases we consider, $t_{(P,Q)}(x)$ is a measurable function of $(p(x),q(x))$ for $P,Q\in\sM$ and $x\in E$. In order to satisfy our measurability Assumption~\ref{Hypo-1}-\ref{cond-1}, it is therefore sufficient to assume that  
\[
\begin{array}{rcl}
(E\times\sM,\cE\otimes\cA) & \longrightarrow & \R \\
(x,P) & \longmapsto & p(x) 
\end{array}
\]
is measurable. In the case of a parametrized model $\sM=\{P_{\theta}=p_{\theta}\cdot\mu, \theta\in \Theta\}$, as described in Section~\ref{sect-scpm}, this condition is satisfied as soon as  the mapping 
\[
\map{p}{(E\times \Theta,\cE\otimes \frak{B})}{\R_{+}}{(x,\theta)}{p_{\theta}(x)}
\]
is measurable. Throughout this section, we assume that such measurability assumptions are satisfied.

\subsection{The case of the total variation distance}
In this section, $\sP$ is the set of all probability measures on $(E,\cE)$ and 
%
\begin{equation}\label{def-VT}
\norm{P-Q}=\frac{1}{2}\int_{E}\ab{p-q}d\mu
\end{equation}
denotes the total variation loss (TV-loss for short) between $P,Q\in \sP$. 
\begin{prop}\label{prop-loss-TV}
The family $\sT(\ell,\sM)$ which consists of all the functions $t_{(P,Q)}$ defined for $P=p\cdot\mu$ and $Q=q\cdot \mu$ in $\sM$ by 
\begin{equation}\label{eq-t-TV}
t_{(P,Q)}=\frac{1}{2}\cro{\1_{q>p}-Q(q>p)}-\frac{1}{2}\cro{\1_{p>q}-P(p>q)}
\end{equation}
satisfies Assumption~\ref{Hypo-0bis} with $\tau=1$ and Assumption~\ref{Hypo-1} with $a_{0}=3/2$ and $a_{1}=1/2$.
\end{prop}

It follows from Proposition~\ref{prop-loss-TV} that we may apply our general construction to the so-defined family $\sT(\ell,\sM)$ with the values  $\cc=\cc_{0}=1/3$ (hence $\lambda=4/3$). The reader can check that the value $\gamma=1/100$ satisfies the requirement of our Theorem~\ref{main1} and that \eref{eq-thm01} is satisfied with $\kappa_{0}=220$. Theorem~\ref{main1} can therefore be rephrased as follows.  
\begin{cor}\label{cor-TV}
Let $\beta\ge 1/\sqrt{n}$, $\cc=1/3$ and $\widehat \pi_{\bsX}^{{\rm TV}}$ be the posterior defined by~\eref{def-piX} and associated with the family $\sT(\ell,\sM)$ given in Proposition~\ref{prop-loss-TV}.
For all $\xi>0$ and any distribution $\gP\et$, with a probability at least $1-2e^{-\xi/2}$,  the posterior $\widehat \pi_{\bsX}^{{\rm TV}}$ satisfies 
\begin{align}
\widehat \pi_{\bsX}^{{\rm TV}}&\pa{\ac{P\in\sM,\; \ell(\overline P\et,P)\le 220\cro{\inf_{P'\in\sM(\beta)}\ell(\overline P\et,P')+2\pa{\beta+\frac{2\xi}{n\beta}}}}}\nonumber \\
&\ge 1-e^{-\xi/2}\label{eq-cor-TV}
\end{align}
where 
\[
\sM(\beta)=\ac{P\in\sM,\; \sup_{r\ge 2\beta}\cro{\frac{200}{n\ray}\log\pa{\frac{\pi\pa{\sB(P,2\ray)}}{\pi\pa{\sB(P,\ray)}}}}\le \beta}.
\]
\end{cor}

By convexity, we may write that 
\[
\inf_{P\in\sM(\beta)}\norm{P-\overline P\et}\le \inf_{P\in\sM(\beta)}\cro{\frac{1}{n}\sum_{i=1}^{n}\norm{P-P_{i}\et}}
\]
and the left-hand side is therefore small  when there exists $P\in\sM(\beta)$ that approximates well enough most of the marginals of $\gP\et$. The concentration properties of $\widehat \pi_{\bsX}^{{\rm TV}}$ remain thus stable with respect to a possible misspecification of the model and a departure from the equidistribution assumption. 

In fact, as we shall see in our Example~\ref{exa-TV} below, the average distribution $\overline P\et$ may belong to
$\sM(\beta)$ even when none of the marginals $P_{i}\et$ does. This means that in good situations, the posterior may concentrate around $\overline P\et$, as it would do in the i.i.d.\ case when the distribution of the data does belong to $\sM(\beta)$, even when the data are non-i.i.d.\ and their marginals do not belong to $\sM(\beta)$.  

\begin{exa}\label{exa-TV}[Example~\ref{ex-euclidien} continued]
Going back to Example~\ref{ex-euclidien} and taking for $\ell$ the TV-loss (then $a_{1}=1/2$), we deduce from~\eref{eq-ex-euclidien01} that 
\[
\overline \ray_{n}=\inf_{P\in\sM}\norm{\overline P\et-P}+2\pa{\sqrt{\frac{k}{n}}+\frac{2\xi}{\sqrt{n k}}}.
\]
In particular, if for each $i\in\{1,\ldots,n\}$, $P_{i}\et$ is the uniform distribution on $[(i-1)/n,i/n]$ and $\sM$ contains the uniform distribution $\cU([0,1])$ on $[0,1]$, $\sM$ contains  $\overline P\et=\cU([0,1])$, even if none of the marginals $P_{i}\et$ belongs to $\sM$. We then get that
\[
\overline \ray_{n}=2\pa{\sqrt{\frac{k}{n}}+\frac{2\xi}{\sqrt{n k}}}
\]
and the posterior concentrates around $\overline P\et$ at a parametric rate.
\end{exa}

\subsection{Case of the $\L_{j}$-loss}
Let $j\in (1,+\infty)$. We denote by $\sP_{j}$ the set of all finite and signed measures on $(E,\cE,\mu)$ which are of the form $P=p\cdot \mu$ with $p\in \sL_{j}(E,\mu)\cap \sL_{1}(E,\mu)$. Let $\ell_{j}$ be the loss defined by $\ell_{j}(P,Q)=\norm{p-q}_{\mu,j}$ for all $P=p\cdot \mu$ and $Q=q\cdot \mu$ in $\sP_{j}$. In this section, $\sP$ is the subset that consists of all the probability measures in $\sP_{j}$. 

\begin{prop}\label{prop-loss-Lj}
Let $\sM=\ac{P=p\cdot\mu,\; p\in\cM}$ be a subset of $\sP_{j}$ for which $\cM$ satisfies for some $R>0$
\begin{equation}\label{eq-LinftyLj}
\hspace{8mm}\norm{p-q}_{\infty}\le R\norm{p-q}_{\mu,j}\quad \text{for all $p,q\in\cM$.}
\end{equation}
Define for $P=p\cdot\mu$ and $Q=q\cdot\mu$ in $\sM$, 
\[
\hspace{12mm}f_{(P,Q)}=\frac{\pa{p-q}_{+}^{j-1}-\pa{p-q}_{-}^{j-1}}{\norm{p-q}_{\mu,j}^{j-1}}\quad \text{when }P\ne Q\quad \text{and}\quad f_{(P,P)}= 0.
\]
Then, the family $\sT(\ell_{j},\sM)$ which contains the functions $t_{(P,Q)}$ defined for $P,Q\in\sM$ by
\begin{equation}\label{T-Lj}
t_{(P,Q)}=\frac{1}{2R^{j-1}}\cro{\int_{E}f_{(P,Q)}\frac{dP+dQ}{2}-f_{(P,Q)}}
\end{equation}
satisfies Assumption~\ref{Hypo-0bis} with $\tau=1$ and Assumption~\ref{Hypo-1} with $a_{0}=3/(4R^{j-1})$ and $a_{1}=1/(4R^{j-1})$.
\end{prop}
When $j=2$, \eref{eq-LinftyLj} is typically satisfied when $\cM$ is a subset of a linear space enjoying good connections between the $\L_{2}(\mu)$ and the supremum norms. Many finite dimensional linear spaces with good approximation properties do satisfy such connections (e.g.\ piecewise polynomials of a fixed degree on a regular partition of $[0,1]$, trigonometric polynomials on $[0,1)$ etc.).  We refer the reader to Birg\'e and Massart~\citeyearpar{MR1653272}[Section~3] for additional examples. The property may also hold for infinite dimensional linear spaces as proven in Baraud~\citeyearpar{BY-TEST}. 

It follows from Proposition~\ref{prop-loss-Lj} that one may choose $\cc=\cc_{0}=1/3$ in \eref{eq-cc} and $\gamma=1/100$ in Theorem~\ref{main1}. Besides, Theorem~\ref{main1} applies with $\kappa_{0}=220$. 

\begin{exa}[Example~\ref{ex-euclidien} continued]
Let us go back to our Example~\ref{ex-euclidien} with $\ell=\ell_{j}$ and $\sT(\ell,\sM)$ given in Proposition~\ref{prop-loss-Lj}. For the choice of $\beta$ given in~\eref{eq-choixbeta1} and $\gamma=1/100$, we deduce from~\eref{eq-ex-euclidien01} (with $a_{1}=1/(4R^{j-1})$) that the resulting posterior $\widehat \pi_{\bsX}$ concentrates on an $\ell_{j}$-ball around $\overline P\et$ with a radius of order 
\[
\overline r_{n}=\inf_{p\in\cM}\norm{\frac{1}{n}\sum_{i=1}^{n}p_{i}\et-p}_{\mu,j}+4R^{j-1}\pa{\sqrt{\frac{k}{n}}+\frac{2\xi}{\sqrt{n k}}}.
\]
\end{exa}

\subsection{The case of the squared Hellinger loss}
In this section, $\sP$ is the set of all probability measures on $(E,\cE)$ and 
\begin{equation}\label{def-h2}
\ell(P,Q)=h^{2}(P,Q)=\frac{1}{2}\int_{E}\pa{\sqrt{p}-\sqrt{q}}^{2}d\mu,
\end{equation}
is the squared Hellinger distance between two probabilities $P,Q\in\sP$. 
\begin{prop}\label{prop-loss-h2}
Let $\psi$ be the function defined by
\[
\map{\psi}{[0,+\infty]}{[-1,1]}{x}{{
\begin{cases}
\dps{\frac{x-1}{x+1}} &\text{ if $x\in [0,+\infty)$}\\
1 &\text{ if $x=+\infty$.}
\end{cases}
}}
\]
The family $\sT(\ell,\sM)$ containing the functions $t_{(P,Q)}$ defined for $P=p\cdot\mu$ and $Q=q\cdot \mu$ in $\sM$ by 
\begin{equation}\label{eq-t-h}
t_{(P,Q)}=\frac{1}{2}\psi\pa{\sqrt{\frac{q}{p}}}
\end{equation}
(with the conventions $0/0=1$ and $x/0=+\infty$ for all $x>0$) satisfies Assumption~\ref{Hypo-0bis} with $\tau=2$ and Assumption~\ref{Hypo-2} with $a_{0}=2$, $a_{1}=3/16$, $a_{2}=3\sqrt{2}/4$.
\end{prop}

With such a choice of family $\sT(\ell,\sM)$, \eref{eq-cc} is satisfied with $\cc=1/125$, then $\cc_{0}\in [0.922,0.923]$,  
and the requirements of Theorem~\ref{main2} are satisfied with $\beta=2\gamma=1/500$. Then the value $\kappa_{0}=1694$ suits. The definition \eref{df-epsn} of $\ray_{n}(\beta,P)$ for $P\in\sM$ becomes 
%
\begin{align}
&\ray_{n}(\beta,P)\nonumber\\
&\quad =\inf\ac{\ray\ge \frac{8000}{3n},\; \frac{\pi\pa{\sB(P,2\ray')}}{\pi\pa{\sB(P,\ray')}}\le \exp\pa{\frac{3n\ray'}{8.10^{6}}}\text{ for all $\ray'\ge \ray$}},\label{eq-rnb-h}
\end{align}
with the convention $\sup\vide=8000/(3n)$. Theorem~\ref{main2} can then be rephrased as follows.

\begin{cor}\label{cor-hell}
Let $\pi_{\bsX}^{h}$ be the posterior defined by~\eref{def-piX} and associated with the family $\sT(\ell,\sM)$ given in Proposition~\ref{prop-loss-h2} and the choices $\cc=1/125$ and $\beta=1/500$. 
%
For all $\xi>0$ and any distribution $\gP\et$, with a probability at least $1-2e^{-\xi/2}$, 
\[
\widehat \pi_{\bsX}^{h}\pa{\ac{P\in\sM,\; h^{2}\pa{\overline P\et,P}\le 1694\ray}}\ge 1-e^{-\xi/2}
\]
where
\[
\ray=\inf_{P\in\sM}\cro{h^{2}\pa{\overline P\et,P}+\ray_{n}(\beta,P)}+\frac{5334\xi}{n}
\]
and $\ray_{n}(\beta,P)$ is given by \eref{eq-rnb-h}.
\end{cor}

As for the total variation distance, we may write that 
\[
\inf_{P\in\sM}h^{2}\pa{\overline P\et,P}\le \inf_{P\in\sM}\cro{\frac{1}{n}\sum_{i=1}^{n}h^{2}\pa{P_{i}\et,P}}.
\]
The left-hand side is small when there exists an element $P\in\sM$ that approximates well most of the marginal distribution $P_{i}\et$. If for such a $P$, the quantity $\ray_{n}(\beta,P)$ is small enough, the posterior concentrates around $\overline P\et$ just as it would do if the data were truly i.i.d.\ with distribution $P\in\sM$.

\begin{exa}[Example~\ref{ex-euclidien} continued]
Let us go back to Example~\ref{ex-euclidien}, more precisely Example~\ref{ex-euclidien2}, with $\ell=h^{2}$ and $\sT(\ell,\sM)$ given in Proposition~\ref{prop-loss-h2}. Inequality \eref{eq-controlpara} is satisfied with $\beta=2\gamma=1/500$ and $a_{1}=3/16$. It follows from~\eref{eq-ex-euclidien02} that $\widehat \pi_{\bsX}^{h}$ concentrates on an $h^{2}$-ball around $\overline P\et$ with a radius of order 
\[
\overline r=\inf_{P\in\sM}h^{2}\pa{\overline P\et,P}+\frac{k+\xi}{n}.
\]
\end{exa}

\section{Comparing the classical Bayesian approach to ours}\label{sect-ctcbato}
In this section, our aim is to highlight some similarities and differences between the Bayesian posterior and ours. Throughout this section, we consider the squared Hellinger loss $\ell=h^{2}$ and denote by $\widehat \pi_{\bsX}^{K}$ the Bayes posterior associated with the model $(\sM,\pi)$. The letter $K$ in the notation $\widehat \pi_{\bsX}^{K}$ refers to the fact that the Bayesian posterior can be obtained from our general construction by using the Kullback--Leibler divergence as explained in Section~\ref{sect-cwtcbpd}. When $\sM=\{P_{\gtheta},\; \gtheta\in \Theta\}$ is parametric with $\Theta\subset \R^{k}$, we denote by $\widehat \nu_{\bsX}^{K}$ the Bayesian posterior on the parameter space $\Theta$ and $\widehat \nu_{\bsX}^{h}$ that associated to $\widehat \pi_{\bsX}^{h}$. 

\subsection{Some classical concentration results for the Bayes posterior distribution}
Most of the results that have been established about the concentration properties of the Bayesian posterior are asymptotic in nature. It seems difficult to establish a general nonasymptotic version of those as we do for our posterior. One of the only exceptions we are aware of 
is Birg\'e~\citeyearpar{BirgeLucien2015Atnb}. 

When the data are i.i.d.\ with a distribution $P\et\in\sM$, a typical asymptotic form of these results is the following one (see Ghosal, Ghosh and van der Vaart~\citeyearpar{MR1790007} Theorems~2.1 and 2.4 for example). Let $\eps_{n}$ be a sequence of positive numbers that converges to zero when $n$ goes to infinity. If $P\et$ fulfils some suitable conditions, that we shall discuss later on and which depend on the prior $\pi$ and $\eps_{n}$, the following convergence in probability holds true
\begin{align}\label{eq-vaartetmoi0}
\widehat \pi^{K}_{\bsX}\pa{\{P\in\sM,\; h^{2}(P\et,P)\ge M_{n}\eps_{n}^{2}\}}\CP{n\to +\infty} 0 \quad \text{under $P\et$}.
\end{align}
In \eref{eq-vaartetmoi0}, $M_{n}=M$ denotes some large enough positive constant if $n\eps_{n}^{2}\to +\infty$ as $n\to +\infty$ while $M_{n}$ is increasing to infinity as $n\to +\infty$ if $\liminf n\eps_{n}^{2}>0$ as $n\to +\infty$. The first condition on $\eps_{n}$ is typically satisfied when $\sM$ is a nonparametric model while the second one generally applies to parametric ones. 

In comparison, in this well-specified framework, our Corollary~\ref{cor-hell} leads to the following result. For all $P\et\in\sM$ and $\xi>0$ 
\begin{align}
&\P\cro{\widehat \pi_{\bsX}^{h}\pa{\ac{P\in\sM,\; h^{2}(P\et,P)\ge \kappa_{0}'\pa{r_{n}(\beta,P\et)+\frac{\xi}{n}}}}\ge e^{-\xi/2}}\nonumber \\
&\le 2e^{-\xi/2}\label{eq-controlpih}
\end{align}
for some numerical constant $\kappa_{0}'>0$. If $P\et$ satisfies $r_{n}(\beta,P\et)\le \eps_{n}^{2}$, we recover~\eref{eq-vaartetmoi0} by setting $\xi=\xi_{n}=(M_{n}/(\kappa_{0}')-1)n\eps_{n}^{2}$. However, our condition that $r_{n}(\beta,P\et)\le \eps_{n}^{2}$ is not equivalent to that imposed on $P\et$ by Ghosal, Ghosh and van der Vaart~\citeyearpar{MR1790007}. It is actually weaker. In their paper, this condition is fulfilled when the prior puts enough mass on Kullback--Leibler type balls around $P\et$. Our approach allows one to consider  Hellinger balls only, which are larger and make our assumption weaker. In fact, as already underlined in the Introduction, these Kullback--Leibler type balls could be empty, and the condition unsatisfied, while our theorem would still apply. 

The result established by Birg\'e~\citeyearpar{BirgeLucien2015Atnb} provides an improvement as compared to the one presented above and established by Ghosal, Ghosh and van der Vaart. Birg\'e  shows that it is essentially possible to get rid of the Kullback--Leibler divergence (see his Theorem~2) but only when the model is parametric and well-specified. Apart for the nonparametric framework, this result leaves little place for improvement since we know that the Bayesian posterior may fail to concentrate around the true parameter when the model becomes slightly ill-specified. 

Another consequence of our Corollary~\ref{cor-hell}, as compared to \eref{eq-vaartetmoi0}, is that it allows one to control  
\[
\widehat \pi_{\bsX}^{h}\pa{\ac{P\in\sM,\; h^{2}(P\et,P)\ge \kappa_{0}'\pa{\eps_{n}^{2}+\frac{\xi}{n}}}}
\]
uniformly over the set $\{P\et\in\sM, r_{n}(\beta,P\et)\le \eps_{n}^{2}\}$. For example, in the framework of Example~\ref{ex-euclidien2}, for the choice $\eps_{n}^{2}=ck/n$ with $c=\log(2B/A)/(\gamma a_{1}\beta)$, we know that $r_{n}(\beta,P\et)\le \eps_{n}^{2}$ for all $P\et\in \sM$ and we deduce from~\eref{eq-controlpih}  that 
\begin{align*}
\sup_{P\et\in\sM}\P\cro{\widehat \pi_{\bsX}^{h}\pa{\ac{P\in\sM,\; h^{2}(P\et,P)\ge \kappa_{0}'\pa{\eps_{n}^{2}+\frac{\xi}{n}}}}\ge e^{-\xi/2}}\le 2e^{-\xi/2}.
\end{align*}
The concentration properties of our posterior is therefore uniform over the statistical model $\sM$. 

\subsection{About the shapes and sizes of the credible regions}
A nice feature of the Bayesian approach lies in the fact that it allows one to build credible regions. In practice, they often play the same role as the confidence regions in the frequentist paradigm. 
When the data are i.i.d.\ with distribution $P\et=P_{\gtheta\et}$ in a parametric model $\sM=\{P_{\gtheta},\; \gtheta\in \Theta\}$, $\Theta\subset \R^{k}$, a credible set for the parameter $\gtheta\et$ is a subset $\widehat \Theta_{n,\bsX}\subset \Theta$ (only depending on observable quantities) that satisfies $\widehat \nu_{\bsX}^{K}(\widehat \Theta_{n,\bsX})\ge 1-e^{-\xi}$ for some choice of $\xi>0$. When $\sM$ is a regular parametric model with a nonsingular Fisher information matrix $\gJ$, and provided that it satisfies additional assumptions -- see  van der Vaart~\citeyearpar{MR1652247} -- the Bernstein-von Mises theorem applies and tells us that 
\[
\norm{\widehat \nu_{\bsX}^{K}-\cN\pa{\widehat \gtheta_{n},(n\gJ(\gtheta\et))^{-1}}}\CP{n\to +\infty} 0\quad \text{under $P_{\gtheta\et}$}
\]
where $\widehat \gtheta_{n}$ denotes the Maximum Likelihood Estimator (MLE for short). Denoting by $\overline \chi_{k}^{-1}(\xi)$ the $(1-e^{-\xi})$-quantile of a chi-square random variable with $k$ degrees of freedom and 
\begin{equation}\label{eq-ConfReg}
\Theta_{n,\bsX}=\ac{\gtheta\in \Theta,\; n\ab{\gJ^{1/2}(\gtheta\et)\pa{\widehat \gtheta_{n}-\gtheta}}^{2}\le \overline \chi_{k}^{-1}(\xi)},
\end{equation}
we deduce that 
\[
\ab{\widehat \nu_{\bsX}^{K}\pa{\Theta_{n,\bsX}}-(1-e^{-\xi})}\le \norm{\widehat \nu_{\bsX}^{K}-\cN\pa{\widehat \gtheta_{n},(n\gJ(\gtheta\et))^{-1}}}\CP{n\to +\infty} 0
\]
hence 
\[
\widehat \nu_{\bsX}^{K}\pa{\Theta_{n,\bsX}}\CP{n\to +\infty}1-e^{-\xi}\quad \text{under $P_{\gtheta\et}$}.
\]
The asymptotic level of ``credibility'' of the set $\Theta_{n,\bsX}$ is therefore $1-e^{-\xi}$. This set is not, however, a credible region since it depends on the unknown parameter $\gtheta\et$. We would obtain a genuine credible region by replacing $\gtheta\et$ by $\widehat \gtheta_{n}$ in the expression of $\Theta_{n,\bsX}$. This substitution would change the level of credibility but not the shape of the region, which is an ellipsoid centred at $\widehat \gtheta_{n}$ and the axes of which are given by the eigenvectors of the Fisher information matrix. 

The aim of this section is to show that our posterior concentrates its mass on regions that have the same shape and approximately the same size. The size of $\Theta_{n,\bsX}$ is determined by the value of the quantile $\overline \chi_{k}^{-1}(\xi)$. The aim of the following lemma is to specify the order of  magnitude of this quantile as a function of $k$ and $\xi$. In fact, we consider below the more general case of the quantiles of a gamma distribution $\gamma(s,\sigma)$ with parameters $s,\sigma>0$, that is, the distribution with density $x\mapsto (x^{s-1}e^{-s/\sigma})/(\sigma^{s}\Gamma(s))$ with respect to the Lebesgue measure on $\R_{+}$. The proof is postponed to Section~\ref{sect-pf-lem-gamma}.
\begin{lem}\label{lem-gamma}
For $s,\sigma,\xi>0$, let $\overline \gamma_{s,\sigma}^{-1}(\xi)$ be the $(1-e^{-\xi})$-quantile of the gamma distribution $\gamma(s,\sigma)$ and $\overline \Phi^{-1}(\xi)$ that of a standard Gaussian random variable. Then, 
\begin{equation}\label{lem-gamma00}
\overline \gamma_{s,\sigma}^{-1}(\xi)\le \sigma\pa{\sqrt{s}+\sqrt{\xi}}^{2}
\end{equation}
and for all $s=t+1>1$ and $\xi\ge \log 2+1/(12t)$, 
\begin{equation}\label{lem-gamma01}
\overline \gamma_{s,\sigma}^{-1}(\xi)\ge \sigma\cro{t+\cro{\sqrt{t}\;\overline \Phi^{-1}\pa{\xi-\frac{1}{12t}}}\vee \cro{\xi +\log\pa{\frac{e^{-1/(12t)}}{\sqrt{2\pi t}}}}}.
\end{equation}
\end{lem}
Since $\overline \Phi^{-1}(\xi)$ is equivalent to $\sqrt{2\xi}$ for large values of $\xi>0$, these two inequalities show that for $s$ and $\xi$ large enough, $\overline \gamma_{s,\sigma}^{-1}(\xi)$ is of order $\sigma\cro{s+\xi}$. In particular, $\overline \chi_{k}^{-1}(\xi)=\overline \gamma_{k/2,2}^{-1}(\xi)$ is of order $k+\xi$ for $k$ and $\xi$ large enough.

In comparison, we prove in Section~\ref{sect-pf-prop-credibleset} the result below for our posterior. This result is based on the assumption that the statistical model $\sM$ is regular in the sense that is defined in Ibragimov and Has'minski\u \i~\citeyearpar{MR620321}. In order to avoid too many technicalities here, we refer the reader to our Section~\ref{sect-RegMod}, more precisely Corollary~\ref{cor-odgh}, for a complete description of the assumptions on the statistical model $\sM$.
\begin{thm}\label{prop-credibleset}
Assume that the statistical model $\sM$ satisfies the assumptions of Corollary~\ref{cor-odgh}. If $X_{1},\ldots,X_{n}$  are i.i.d.\ with distribution $P_{\gtheta\et}\in\sM$, for all $\xi>0$ and $n$ large enough, with a probability $1-2e^{-\xi}$, 
\begin{equation}\label{eq-credibleset}
\widehat \nu_{\bsX}^{h}\pa{\ac{\gtheta\in \Theta,\; n\ab{\gJ^{1/2}(\gtheta\et)\pa{\gtheta-\gtheta\et}}^{2}\le \kappa\et\pa{k+\xi}}}\ge 1-e^{-\xi}
\end{equation}
where  $\kappa\et$ is a positive numerical constant.
\end{thm}
The set 
\[
\ac{\gtheta\in \Theta,\; n\ab{\gJ^{1/2}(\gtheta\et)\pa{\gtheta-\gtheta\et}}^{2}\le \kappa\et\pa{k+\xi}}
\]
possesses the same shape and, by Lemma~\ref{lem-gamma}, approximately the same size as the set $\Theta_{n,\bsX}$ defined by~\eref{eq-ConfReg}. We deduce from Theorem~\ref{prop-credibleset} that the classical Bayes posterior and ours concentrate both on similar sets. If $\widehat \gtheta_{n}$ is an asymptotically efficient estimator of $\gtheta\et$, it is therefore reasonable to look for a credible region of the form 
\[
\ac{\gtheta\in \Theta,\;n\ab{\gJ^{1/2}(\widehat \gtheta_{n})\pa{\gtheta-\widehat \gtheta_{n}}}^{2}\le t}, t>0
\]
for $\widehat \nu_{\bsX}^{h}$ as we would do for the classical Bayes one.

\subsection{Robustness}\label{sect-rob}
As already mentioned, our approach allows the statistician to design robust posteriors by choosing as a loss function the squared Hellinger  loss or the total variation one. In this section, we illustrate this property on a concrete example. Consider the statistical model $\sM=\{P_{\theta}=\cN(\theta,1),\; \theta\in\R\}$ and the prior $\pi$ associated with the distribution $\nu=\cN(0,1)$ on $\Theta=\R$. Then, the Bayes posterior on $\Theta$ is $\widehat \nu_{\bsX}^{K}=\cN(\widehat m_{n}, \sigma_{n}^{2})$ with $\widehat m_{n}=(n+1)^{-1}\sum_{i=1}^{n} X_{i}$ and $\sigma_{n}^{2}=1/(n+1)$. It concentrates on intervals of the form $[\widehat m_{n}-c/\sqrt{n+1},\widehat m_{n}+c/\sqrt{n+1}]$ for $c>0$ large enough. If the distribution of the data is contaminated so that $X_{1},\ldots,X_{n}$ are i.i.d.\ with distribution 
\[
P\et=\pa{1-\frac{1}{n}}P_{0}+\frac{1}{n}\cN\pa{10^{4}(n+1),1/n},
\]
then with a probability at least $1-(1-1/n)^{n}\ge 1-1/e>1/2$, the posterior concentrates around $\widehat m_{n}\approx 10^{4}$, hence far away from 0, even though $P\et$ and $P_{0}$ are close: $\norm{P\et-P_{0}}\le 1/n$. 

In this specific framework, the model $\sM$ is regular, the Fisher information is constant and positive, $\nu$ admits a positive density which is continuous at $\theta\et=0$ and for all $\theta,\theta'\in \Theta$, $h^{2}(\theta,\theta')=1-e^{-|\theta-\theta'|^{2}/8}$. We shall see in Section~\ref{sect-copaic}, more precisely in Corollary~\ref{cor-odgh}, that for such regular statistical models $r_{n}(\beta,P_{0})\le \kappa\et/n$ for some numerical constant $\kappa\et>0$, at least for $n$ large enough. Since $h^{2}(P\et,P_{0})\le \norm{P\et-P_{0}}\le 1/n$, we deduce from Corollary~\ref{cor-hell} that the posterior $\widehat \nu_{\bsX}^{h}$ concentrates on a set of the form 
\[
\ac{\theta\in \R,\; h^{2}(\theta,0)\le\frac{c}{n}}=\ac{\theta\in \R,\; |\theta|\le \sqrt{8\log\pa{\frac{1}{1-c/n}}}}
\]
with $c>0$. This set is an interval around 0 of length $1/\sqrt{n}$, at least for $n$ sufficiently large. Despite the contamination of the data, the concentration property of $\widehat \nu_{\bsX}^{h}$ remains thus the same as in the well-specified case.

\section{Applications}\label{sct-app}
\subsection{How to choose $\beta$ in Theorem~\ref{main1} for a translation model?}\label{sect-trans00}
In this section, we consider the  translation model $\sM=\{P_{\theta}=p(\cdot-\theta)\cdot \mu,\; \theta\in\R\}$ where  $p$ is a density on $\R$ with respect to the Lebesgue measure $\mu$. Our aim is to estimate the translation parameter $\theta$ by using a prior $\nu_{\sigma}$ on $\Theta=\R$ with a density (with respect to $\mu$) of the form $q(\cdot/\sigma)/\sigma$ for some density $q$ and positive number $\sigma$. We evaluate the estimation error by means of the total variation loss. In order to use our construction we need to tune the parameter $\beta$.  In Section~\ref{ssect-thm1}, we suggested to choose $\beta\ge 1/\sqrt{n}$ satisfying \eref{def-betaquantile}. In order to find such a value of $\beta=\beta(\alpha)$, we may proceed as follows. Consider a symmetric bounded interval $I=[-l/2,l/2]\subset \R$ of length $l>0$ satisfying $\nu_{\sigma}(I)\ge 1-\alpha$, hence concentrating most of the mass of the prior $\nu_{\sigma}$. If the set 
$\sM(\beta)$ is large enough to contain $\{P_{\theta},\; \theta\in I\}$, 
\begin{equation}\label{eq-sect-trans0}
\pi\pa{\sM(\beta)}\ge \pi\pa{\{P_{\theta},\; \theta\in I\}}=\nu_{\sigma}(I)\ge 1-\alpha
\end{equation}
and $\beta$ satisfies \eref{def-betaquantile}. We deduce from our Corollary~\ref{cor-TV} that the corresponding posterior $\widehat \pi_{\bsX}^{{\rm TV}}$ concentrates with a probability at least $1-2e^{-\xi/2}$ on a TV-ball with a radius of order
%
\begin{equation}\label{eq-sect-trans00}
\inf_{P'\in\sM(\beta)}\ell(\overline P\et,P')+2\pa{\beta+\frac{2\xi}{n\beta}}\le\inf_{\theta\in I}\ell(\overline P\et,P_{\theta})+ 2\beta+\frac{4\xi}{\sqrt{n}}=r( \beta).
\end{equation}
The approximation term $\inf_{\theta\in I}\ell(\overline P\et,P_{\theta})$ is small as soon as  $\overline P\et$ is close enough to a distribution $P_{\theta\et}$ whose parameter $\theta\et$ belongs to $I$. If we want to prevent us from the situation where $\argmin_{\theta\in \Theta}\ell(\overline P\et,P_{\theta})$ is far from 0, we need to increase $I$ (or equivalently diminish $\alpha$). What would be the consequence on the value of $ \beta=\beta(\alpha)$? What if we increase $\sigma$, to make the prior distribution flatter, or diminish $\sigma$ to make it more picky? Finally, what is the influence of the choice of the density $q$ on the size of $ \beta$?  

These are the questions we want to answer in this section. In order to simplify the presentation of our results and avoid technicalities, we make the change of variables $l=2\sigma t$, or equivalently $t=l/(2\sigma)>0$, and assume the following.
\begin{ass}\label{Hypo-Trans}
The density $q$ is positive, symmetric and decreasing on $\R_{+}$. There exists some nonnegative and nondecreasing function $\varphi:[0,1)\to \R_{+}$ such that 
\[
\norm{P_{0}-P_{\theta}}\le r\iff \ab{\theta}\le \varphi(r)\quad \text{for all $r\in [0,1)$.}
\]
%
%
\end{ass}
When $p$ is symmetric and nonincreasing on $\R_{+}$, the total variation distance between $P_{0}$ and $P_{\theta}$ is given by
\[
\norm{P_{0}-P_{\theta}}=2P_{0}\pa{[0,|\theta|/2]}\quad \text{for all $\theta\in\R$.}
\]
Our Assumption~\ref{Hypo-Trans} is then satisfied with  $\varphi(r)=F_{0}^{-1}[(r+1)/2]$ for all $r\in [0,1)$, where $F_{0}^{-1}$ denotes the quantile function of the distribution $P_{0}$.  
We set
\begin{equation}\label{def-Gammabar}
\overline \Gamma=\max\ac{\cro{\sup_{0<r\le 1/4}\frac{\varphi(2r)}{\varphi(r)}}q(0),\frac{1}{2\varphi(1/4)}}
\end{equation}
and assume that this quantity is finite. Note that it only depends on $q(0)$ and $p$. For example, if $p$ is the density $x\mapsto (1/2)e^{-|x|}$, 
\[
\norm{P_{0}-P_{\theta}}=1-\exp\cro{-|\theta|/2}\quad \text{and }\quad  \varphi: r\mapsto-2\log(1-r).
\]
Since the mapping $r\mapsto [\varphi(2r)/\varphi(r)]$ is increasing, we obtain in this case
\[
\overline \Gamma=\frac{1}{\log(4/3)}\max\ac{q(0)\log 2,\frac{1}{4}}.
\]
If now $p:x\mapsto (s/2)(1-|x|)^{s-1}\1_{|x|<1}$ with $s>0$, 
\[
\norm{P_{0}-P_{\theta}}=1-(1-|\theta|/2)^{s}\quad \text{and}\quad  \varphi: r\mapsto 2[1-(1-r)^{1/s}].
\]
The mapping $r\mapsto \varphi(2r)/\varphi(r)$ has a continuous extension on $[0,1/4]$ and is therefore bounded.  Given $q(0)$, $\overline \Gamma$ is therefore a finite number.

 The following result is proven in Section~\eref{Pf-Mbeta-trans}. 
\begin{prop}\label{Mbeta-trans}
Assume that Assumption~\ref{Hypo-Trans} is satisfied and $\overline \Gamma$ is finite. Let $t$ be a $(1-\alpha/2)$-quantile of $q$ with $\alpha\le 1/2$. The set $\sM(\beta)$  contains the subset $\ac{P_{\theta}, \theta\in [-\sigma t,\sigma t]}$ and therefore satisfies  \eref{eq-sect-trans0} if
\begin{equation}\label{eq-beta}
\beta\ge \overline \beta=\sqrt{\frac{1}{n\gamma}\max\ac{\log\pa{\frac{\overline \Gamma \pa{\sigma \vee 1}}{q(2 t)}},\log 4}}.
\end{equation}
\end{prop}

Let us now comment on this result. The quantity $\overline \beta$ may be written as $C/\sqrt{n}$ with  
%
%
%
\[
C=\sqrt{\frac{1}{\gamma}\max\ac{\log\pa{\frac{\overline \Gamma \pa{\sigma \vee 1}}{q(2 t)}},\log 4}}.
\]
%
Increasing the value of $\sigma$ or that of $t$ enlarges the interval $I=[-\sigma t,\sigma t]$. It also makes the value of $C=C(\sigma,t)$ larger. Increasing $\sigma$ makes the prior $\nu_{\sigma}$ flatter and for a fixed value of $t>0$, $C=C(\sigma)$ increases as $\sqrt{\log \sigma}$ when $\sigma$ is larger than 1. In the other case, for a fixed value of $\sigma$, $C=C(t)$ increases as $\sqrt{\log(1/q(2t))}$. For example, when $q$ is the density of a standard Gaussian random variable, $\sqrt{\log(1/q(2t))}$ is of order $t$, while for the Laplace and the Cauchy distributions it is of order $\sqrt{t}$ and $\sqrt{\log t}$ respectively. This result illustrates the fact that it is safer to use priors with heavy tails when the size of the location parameter is uncertain. In case of a light-tailed prior, it may be wise to introduce a scaling parameter $\sigma>1$. 
By taking $\sigma=10$, the concentration radius only increases by a factor less than 1.6, while the interval $I$ is ten times longer.
\subsection{Fast rates}\label{sect-FR}
We go back to the statistical framework described in Section~\ref{sect-trans00} and consider the special case of the density $p:x\mapsto s x^{s-1}\1_{(0,1]}$ with $s\in (0,1]$. As before, we choose the TV-loss. In this specific situation, 
\begin{equation}\label{eq-ex30}
\norm{P_{\theta}-P_{\theta'}}=\ab{\theta-\theta'}^{s}\wedge 1\quad \text{for all $\theta,\theta'\in\R$}
\end{equation}
and consequently, $\varphi(r)=r^{1/s}$ for all $r\in [0,1)$. Besides, the family $\sT(\ell,\sM)$ given by~\eref{eq-t-TV} satisfies not only Assumption~\ref{Hypo-1} but also Assumption~\ref{Hypo-2} with $a_{2}=1$. These two facts are proven in Baraud~\citeyearpar{BY-TEST} [Examples 5 and 6]. As a consequence, Theorem~\ref{main2} applies. The reader can check that the constants $\cc=\beta=0.1$ and $\gamma=0.01$ satisfy the requirements of Theorem~\ref{main2} and that its conclusion holds true with $\kappa_{0}=144$.

In order to be more specific about the concentration radius of our posterior $\widehat \pi_{\bsX}^{{\rm TV}}$, the following Proposition provides an upper bound for the quantity $r_{n}(\beta,P_{\theta})$. The proof is postponed to Section~\ref{Sect-PLemex30}.
\begin{prop}\label{prop-priorex3}
Let $t_{0}$ be the third quartile of $\nu_{1}$. If the density $q$ is positive, symmetric and decreasing on $[0,+\infty)$, for all $\theta\in\R$ the quantity $r_{n}(\beta,P_{\theta})$ is not larger than
\begin{equation}\label{eq-prop-priorex3}
\overline r_{n}(\beta,P_{\theta})= \frac{2000}{ n}\max\ac{\log\pa{\frac{\overline \Gamma \pa{\sigma \vee 1}}{{q\cro{2\pa{\frac{|\theta|}{\sigma}\vee t_{0}}}}}},\log 4}.
\end{equation}
\end{prop}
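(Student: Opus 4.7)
The plan is to establish the uniform bound
\[
\frac{\pi(\sB(P_\theta,2r'))}{\pi(\sB(P_\theta,r'))}\le A:=\max\left\{\frac{\overline\Gamma(\sigma\vee 1)}{q(2T)},\,4\right\}\qquad\text{for every }r'>0,
\]
where $T:=(|\theta|/\sigma)\vee t_0$. Since $\log A=n\overline r_n(\beta,P_\theta)/2000$, and the chosen parameters $\beta=1/10$, $a_1=1/2$, $\gamma=1/100$ give $\gamma n\beta a_1=n/2000$ together with $1/(n\beta a_1)=20/n\le 2000(\log 4)/n\le \overline r_n(\beta,P_\theta)$, this uniform bound at once yields $\log(\text{ratio})\le n\overline r_n(\beta,P_\theta)/2000\le nr'/2000$ for all $r'\ge\overline r_n(\beta,P_\theta)$, hence $r_n(\beta,P_\theta)\le\overline r_n(\beta,P_\theta)$ directly from~\eref{df-epsn}.

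For the ratio bound itself, I would first invoke the identity $\norm{P_\theta-P_{\theta'}}=|\theta-\theta'|^\alpha\wedge 1$ recalled before the statement, so that $\sB(P_\theta,r)=\{P_{\theta'}:|\theta-\theta'|\le G(r)\}$ for $r\le 1$ with $G(r)=r^{1/\alpha}$, and $\sB(P_\theta,r)=\sM$ for $r\ge 1$. The change of variable $u=s/\sigma$ (together with the WLOG assumption $\theta\ge 0$ from symmetry of $q$) writes $\pi(\sB(P_\theta,r))=\int_{\theta_0-G(r)/\sigma}^{\theta_0+G(r)/\sigma}q(u)\,du$ with $\theta_0:=|\theta|/\sigma$, and the monotonicity and symmetry of $q$ on $\R_+$ yield, for $r\le 1$, the two elementary bounds
\[
\frac{2G(r)}{\sigma}\,q\!\left(\theta_0+\frac{G(r)}{\sigma}\right)\le\pi(\sB(P_\theta,r))\le \frac{2G(r)}{\sigma}\,q(0).
\]

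The argument would then split along the threshold $r_\star:=(\sigma T)^\alpha$, the scale at which the ball around $P_\theta$ starts to capture a substantial share of the prior. For $r'\le r_\star$, one has $G(r')/\sigma\le T$, hence $\theta_0+G(r')/\sigma\le 2T$ and $q(\theta_0+G(r')/\sigma)\ge q(2T)$. When $r'<1/2$ the two elementary bounds combine (using $G(2r')/G(r')=2^{1/\alpha}$) into ratio $\le 2^{1/\alpha}q(0)/q(2T)\le\overline\Gamma/q(2T)$; when $r'\in[1/2,1)$ the numerator equals $1$ while the denominator lower bound together with $G(r')\ge G(1/4)$ gives ratio $\le \sigma/[2G(1/4)q(2T)]\le(\sigma\vee 1)\overline\Gamma/q(2T)$; and when $r'\ge 1$ both sides equal $1$. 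For $r'>r_\star$, monotonicity yields $\pi(\sB(P_\theta,r'))\ge\pi(\sB(P_\theta,r_\star))$, and the defining property $\int_0^{t_0}q(u)\,du=1/4$ of the third quartile forces $\pi(\sB(P_\theta,r_\star))\ge 1/4$: either $\sigma T\ge 1$ (ball is $\sM$), or $T=\theta_0\ge t_0$ (giving $\int_0^{2\theta_0}q\ge\int_0^{t_0}q=1/4$), or $T=t_0>\theta_0$ (giving $\int_{\theta_0-t_0}^{\theta_0+t_0}q\ge\int_0^{t_0+\theta_0}q\ge 1/4$). So ratio $\le 4$ in that regime.

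The main technical delicacy lies in the intermediate sub-case $r'\in[1/2,1)$ with $r'\le r_\star$: this is the only place where one genuinely needs the second term $1/(2G(1/4))$ entering $\overline\Gamma$ (through $G(r')\ge G(1/4)$) and the factor $\sigma\vee 1$ (to absorb the $\sigma$ coming from $1/\nu_\sigma(\cdot)$). The remaining verification that every sub-case fits under the single envelope $A=\overline\Gamma(\sigma\vee 1)/q(2T)\vee 4$ is careful but routine bookkeeping.
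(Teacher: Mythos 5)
Your argument is correct and reaches the same conclusion, but you re-derive the key inequality rather than cite it. The paper's own proof is three lines: set $t := (|\theta|/\sigma)\vee t_0$, note that $\theta\in[-\sigma t,\sigma t]$ and that $t$ is a $(1-\alpha)$-quantile with $\alpha\le 1/4$ (because $t\ge t_0$), and then invoke the uniform ratio bound \eref{eq-fonda-r} established in the proof of Proposition~\ref{Mbeta-trans}; the prefactor $2000/n$ is simply $1/(\gamma n a_1\beta)$ for the section's values $\gamma=1/100$, $\beta=1/10$, $a_1=1/2$, and the lower-bound constraint $r\ge 1/(n\beta a_1)=20/n$ is absorbed into $2000\log 4/n$. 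What you do instead is re-prove a version of \eref{eq-fonda-r} from scratch, specialised to the power map $G(r)=r^{1/\alpha}$. Your case split (below versus above $r_\star=(\sigma T)^\alpha$, further split by the thresholds $1/2$ and $1$) is a mild reshuffling of the one in Section~\ref{Pf-Mbeta-trans}, which splits at $r_0=H(\sigma t)$ and $1/4$; they give the same envelope because $G(2r)/G(r)=2^{1/\alpha}$ is constant for this $G$, so the supremum over $(0,1/4]$ in the definition of $\overline\Gamma$ coincides with the ratio you use up to $1/2$. Every step you take checks out, including the lower bound on the prior mass of $\sB(P_\theta,r_\star)$ via the third-quartile property and the final reduction to the definition \eref{df-epsn}. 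So there is no gap; you simply duplicate work that Proposition~\ref{Mbeta-trans} already makes available.
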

Then, our Theorem~\ref{main2} tells us that for all $\xi>0$, with a probability at least $1-2e^{-\xi/2}$, the posterior satisfies 
\[
\widehat \pi_{\bsX}^{{\rm TV}}\pa{\sB(\overline P\et,144r)}\ge 1-e^{-\xi/2}
\]
with 
%
\begin{equation}\label{eq-defr-TVspecial}
\ray\le \inf_{\theta\in\R}\cro{\norm{\overline P\et-P_{\theta}}+\overline r_{n}(\beta,P_{\theta})}+\frac{40\xi}{n}.
\end{equation}
When the data are i.i.d.\ with distribution $P_{\theta\et}$, with probability close to 1, a randomized estimator $P_{\widehat \theta}$ with distribution $\widehat \pi_{\bsX}^{{\rm TV}}$ satisfies with high probability 
\begin{align*}
\ab{\theta\et-\widehat \theta}^{s}\wedge 1&=\norm{P_{\theta\et}-P_{\widehat \theta}}\le \frac{C(\xi,s,q,\theta\et,\sigma)}{n}.
\end{align*}
This inequality implies, at least for $n$ large enough, that
\begin{align*}
\ab{\theta\et-\widehat \theta}\le \frac{C^{1/s}(\xi,s,q,\theta\et,\sigma)}{n^{1/s}},
\end{align*}
which means that the parameter $\theta\et$ is estimated at rate $n^{-1/s}$. This rate is much faster than the usual $(1/\sqrt{n})$-parametric one that is reached by an estimator based on a moment method for instance. For example, when $s=1/3$ and $n=100$, a moment estimator provides an accuracy of order $10^{-1}$ while that of $\widehat \theta$ is of order $10^{-6}$. Since $p$ is unbounded, note that the maximum likelihood estimator for $\theta\et$ does not exist and is therefore useless. 

It follows from the work of Le Cam that in a translation model $\sM$ of the form $\{P_{\theta}=p(\cdot-\theta)\cdot \mu, \theta\in\R\}$, where $p$ is a density with respect to the Lebesgue measure $\mu$, it is impossible to estimate a distribution $P\et\in\sM$ from an $n$-sample at a rate faster than $1/n$ for the TV-loss. Because of~\eref{eq-ex30}, the rate we get is not only optimal for estimating $P_{\theta\et}$ but also for estimating $\theta\et$ with respect to the Euclidean distance. 

An alternative rate-optimal estimator for estimating $\theta\et$  is that given by the minimum of the observations. This estimator is unfortunately obviously non-robust to the presence of an outlier among the sample. Our construction provides an estimator which possesses the property of being both rate-optimal and robust.

It also interesting to see how the quantity $\overline r_{n}(\beta,P_{\theta})$ given in \eref{eq-prop-priorex3} deteriorates under a misspecification of the prior $\nu_{\sigma}$, that is, when the size of the parameter $\theta\et$ is large compared to $\sigma$. When $q$ is Gaussian, $\overline r_{n}(\beta,P_{\theta\et})$ increases by a factor of order $(\theta\et/\sigma)^{2}$ while for the Laplace and Cauchy distributions it is of order $|\theta\et|/\sigma$ and $\log(|\theta\et|/\sigma)$ respectively. From these results, we conclude as before that the Cauchy distribution possesses some advantages over the other two distributions when little information is available on the location of the parameter $\theta\et$. 

\subsection{A general result under entropy}
In this section, we equip $E=\R^{k}$ with the Lebesgue measure $\mu$ and the norm $\ab{\cdot}_{\infty}$. We consider the TV-loss  and the location-scale family 
\begin{equation}\label{eq-defLF}
\sM=\ac{P_{(p,\gm,\sigma)}=\frac{1}{\sigma^{k}}p\pa{\frac{\cdot-\gm}{\sigma}}\cdot \mu,\ p\in\cM_{0},\; \gm\in\R^{k}, \sigma>0},
\end{equation}
where $\cM_{0}$ is a set of densities on $\R^{k}$. Given independent observations $\etc{X}$ with presumed distribution $P\et=P_{(p\et,\gm\et,\sigma\et)}\in \sM$, our aim is to estimate the density $p\et\in\cM_{0}$, the location parameter $\gm\et\in\R^{k}$ and the scale parameter $\sigma\et>0$, hence the parameter $\theta\et=(p\et,\gm\et,\sigma\et)\in \Theta=\cM_{0}\times \R^{k}\times (0,+\infty)$. We assume that the set of densities $\cM_{0}$ satisfies the following conditions.
\begin{ass}\label{hypo-entro}
Let $\widetilde D$ be a continuous nonincreasing mapping from $(0,+\infty)$ to $[1,+\infty)$ such that $\lim_{\eta\to +\infty}\eta^{-2}\widetilde D(\eta)=0$. For all $\eta>0$, there exists a finite subset $\cM_{0}[\eta]\subset \cM_{0}$ satisfying
\begin{equation}\label{eq-cardi}
\card{\cM_{0}[\eta]}\le \exp\cro{\widetilde D(\eta)}
\end{equation}
such that for all $p\in\cM_{0}$, there exists $\overline p\in \cM_{0}[\eta]$ that satisfies
\begin{equation}\label{eq-trans}
\norm{P_{(p,\bs{0},1)}-P_{(\overline p,\bs{0},1)}}=\frac{1}{2}\int_{\R^{k}}\ab{p-\overline p}d\mu\le \eta.
\end{equation}
Besides, we assume that there exist $A,s>0$ such that for all $p\in\cM_{0}$, $\gm\in\R^{k}$ and $\sigma\ge 1$, 

\begin{equation}\label{app00}
\norm{P_{(p,\bs{0},1)}-P_{(p,\gm,\sigma)}}\le \cro{A\pa{\left(\ab{\frac{\gm}{\sigma}}_{\infty}\right)^{s}+\pa{1-\frac{1}{\sigma}}^{s}}}\bigwedge 1.
\end{equation}
\end{ass}

The first part of Assumption~\ref{hypo-entro}, which corresponds to inequalities~\eref{eq-cardi} and~\eref{eq-trans}, aims at measuring the size of the set $\cM_{0}$ 
by means of its entropy. The entropy of a set controls its metric dimension and usually determines the minimax rate of convergence over it as shown in Birg\'e~\citeyearpar{MR722129}. With the second part of Assumption~\ref{hypo-entro}, namely inequality~\eref{app00}, we require some regularity properties of the TV-loss with respect to the location and scale parameters. It will be commented on later. We shall see that this condition may be satisfied even when the densities in $\cM_{0}$ are not smooth. 

Let us now turn to the choice of our prior. We first consider a countable subset of the parameter space $\Theta$ that will be proven to possess good approximation properties. Namely,  we define for $\eta,\delta>0$ 
\[
\Theta[\eta,\delta]=\ac{\pa{\overline p,(1+\delta)^{j_{0}}\delta\gj,(1+\delta)^{j_{0}}},\; (\overline p,j_{0},\gj)\in\cM_{0}[\eta]\times \Z\times \Z^{k}}
\]
and we associate a positive weight $L_{\theta}$ with any element $\theta=\theta(\overline p,j_{0},\gj)\in \Theta[\eta,\delta]$ as follows 
\begin{equation}\label{def-poids}
L_{\theta}=(k+1)L+\log\ab{\cM_{0}[\eta]}+2\sum_{i=0}^{k}\log(1+|j_{i}|)
\end{equation}
with $L=\log\cro{(\pi^{2}/3)-1}$. It is not difficult to check that $\sum_{\theta\in \Theta[\eta,\delta]}e^{-L_{\theta}}=1$, and we may therefore endow $\sM$ with the (discrete) prior $\pi$ defined as
\begin{equation}\label{def-pi00}
\pi(\ac{P_{\theta}})=e^{-L_{\theta}}\quad \text{for all $\theta\in \Theta[\eta,\delta]$.}
\end{equation}

With such a  prior, our posterior $\widehat \pi_{\bsX}^{{\rm TV}}$ given in Corollary~\ref{cor-TV} possesses the following properties.
\begin{cor}\label{cor-Entrop}
Let $\xi>0$ \and $K>1$. Assume that $\cM_{0}$ satisfies Assumption~\ref{hypo-entro} and define
\begin{align}
\eta&=\eta_{n}=\inf\sD_{n}\quad \text{with}\quad \sD_{n}=\ac{\eta>0,\; \widetilde D(\eta)\le \frac{n\eta^{2}}{24}}\label{eq-etan}\\
\delta&=\delta_{n}=\pa{\frac{\eta_{n}}{2A}}^{1/s},\label{eq-deltan}\\
\beta&=\beta_{n}= \frac{1}{2}\cro{K\eta_{n}+2\sqrt{\frac{18.6(k+1)}{n}}}\label{def-b00}
\end{align}
and the subset $\sM_{n}(K)$ of $\sM$ that consists of the elements $P_{(p,\gm,\sigma)}$ for which 
\begin{equation}\label{def-sMK}
|\log\sigma|\vee \ab{\frac{\gm}{\sigma}}_{\infty}\le \Lambda_{n}=\exp\cro{\frac{(K^{2}-1)n\eta_{n}^{2}}{48(k+1)}+\log\log(1+\delta_{n})}.
\end{equation}
Then, the posterior $\widehat \pi_{\bsX}^{{\rm TV}}$ satisfies the following property: there exists a numerical constant $\kappa_{0}'>0$ such that for all $\xi>0$,
\begin{equation}\label{eq-cor-Entrop0}
\E\cro{\widehat \pi_{\bsX}\pa{\co{\sB}(\overline P\et,\kappa_{0}'\ray_{n})}}\le 2e^{-\xi}
\end{equation}
with
\begin{equation}\label{eq-cor-Entrop}
\ray_{n}=\inf_{P\in\sM_{n}(K)}\ell(\overline P\et,P)+K\eta_{n}+\sqrt{\frac{k+1}{n}}+\frac{\xi}{\sqrt{n(k+1)}}\wedge \frac{\xi}{Kn\eta_{n}}.
\end{equation}
\end{cor}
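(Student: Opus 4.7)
My plan is to apply Theorem~\ref{main1}---concretely, Corollary~\ref{cor-TV} with $\cc=1/3$---to the discrete prior $\pi$ of~\eref{def-pi00}. Admissibility $\beta_n\ge 1/\sqrt n$ is immediate since $\beta_n\ge \sqrt{18.6(k+1)/n}$. The core of the proof will be the inclusion $\sM_n(K)\subseteq \sM(\beta_n)$; granted this, \eref{eq-cor-Entrop} follows from Theorem~\ref{main1} because $2\beta_n=K\eta_n+2\sqrt{18.6(k+1)/n}$ and $4\xi/(n\beta_n)\lesssim \xi/\sqrt n$, with the universal constant $\kappa_0'$ absorbing $\kappa_0=220$ and the remaining numerical factors.

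To prove the inclusion I fix $P=P_{(p,\gm,\sigma)}\in \sM_n(K)$ and construct a natural approximant $P_{\theta^\star}\in\mathrm{supp}(\pi)$ by picking $\overline p\in\cM_0[\eta_n]$ with $\ell(P_{(p,\bs 0,1)},P_{(\overline p,\bs 0,1)})\le \eta_n$ via~\eref{eq-trans}, an integer $j_0$ so that $\sigma'=(1+\delta_n)^{j_0}$ matches $\sigma$ up to the factor $(1+\delta_n)$, and $\gj\in\Z^k$ so that $\gm'=\sigma'\delta_n\gj$ is at coordinatewise distance $\le \sigma'\delta_n$ from $\gm$. Translation--scale invariance of TV reduces $\ell(P_{(\overline p,\gm,\sigma)},P_{(\overline p,\gm',\sigma')})$ to the canonical form targeted by~\eref{app00}, which together with the calibration $\delta_n=(\eta_n/(2A))^{1/\alpha}$ yields $2A\delta_n^\alpha=\eta_n$. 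A triangle inequality then gives $\ell(P,P_{\theta^\star})\le 2\eta_n$.

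I will then bound the weight $L_{\theta^\star}$ from~\eref{def-poids}. The entropy term satisfies $\log|\cM_0[\eta_n]|\le\widetilde D(\eta_n)\le n\eta_n^2/24$ by the definition~\eref{eq-etan} of $\eta_n$. Membership in $\sM_n(K)$ caps $|\log\sigma|\vee|\gm/\sigma|_\infty\le\Lambda_n$, so $|j_0|\le\Lambda_n/\log(1+\delta_n)$ and $|j_i|\lesssim\Lambda_n/\delta_n$ for $i\ge 1$. Substituting the explicit form of $\Lambda_n$ from~\eref{def-sMK} and using $\log(1+\delta_n)/\delta_n\le 1$, each of these is at most $e^{(K^2-1)n\eta_n^2/[48(k+1)]}$ up to an $O(1)$ factor, whence $\log(1+|j_i|)\le (K^2-1)n\eta_n^2/[48(k+1)]+O(1)$. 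Summing the $2(k+1)$ such terms with the entropy term yields $L_{\theta^\star}\le K^2 n\eta_n^2/24+O(k+1)$; the exponent in $\Lambda_n$ is calibrated precisely so that the integer-index contribution and the entropy contribution are of the same order.

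Finally I verify the defining inequality of $\sM(\beta_n)$: $\log[\pi(\sB(P,2r))/\pi(\sB(P,r))]\le \gamma n r\beta_n a_1$ for all $r\ge 2\beta_n$, with $\gamma$ chosen close to the permitted upper bound $(\cc_0\wedge \cc)/(2\tau)=1/6$ of Theorem~\ref{main1}. Because $\ell(P,P_{\theta^\star})\le 2\eta_n\le 2\beta_n$, one has $P_{\theta^\star}\in\sB(P,r)$ for all $r\ge 2\beta_n$, so $\pi(\sB(P,r))\ge e^{-L_{\theta^\star}}$ and the inequality reduces (worst case $r=2\beta_n$) to $L_{\theta^\star}\le \gamma n\beta_n^2$. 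The formula~\eref{def-b00} for $\beta_n$ is designed so that $n\beta_n^2\ge K^2 n\eta_n^2/4+18.6(k+1)$, which, with $\gamma$ near $1/6$, absorbs both the $K^2 n\eta_n^2/24$-part and the $O(k+1)$-part of $L_{\theta^\star}$. The hard part will be exactly this calibration: the coefficients $K^2$, $24$, $48$ and $18.6$ are not independent but must fit together so that the entropy and integer-weight contributions jointly stay inside $\gamma n\beta_n^2$ for an admissible $\gamma<1/6$.
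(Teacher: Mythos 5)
Your overall plan (reduce to Theorem~\ref{main1} via Corollary~\ref{cor-TV} with $\cc=1/3$, use the discrete approximant, and calibrate $\beta_n$ against the weight $L_{\theta^\star}$) is aligned with the paper, and your bound on $L_{\theta^\star}$ and the final calibration $L_{\theta^\star}\le \gamma n\beta_n^2$ with $\gamma\approx 1/6$ are essentially the paper's computations. However, there is a genuine gap in the key step you identify as ``the core of the proof'': the inclusion $\sM_n(K)\subseteq\sM(\beta_n)$ does not hold in general, and the paper does not attempt it.

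The issue is that $\pi$ is a purely atomic prior supported on the grid $\{P_\theta,\theta\in\Theta[\eta,\delta]\}$. For a generic $P\in\sM_n(K)$ (not a grid point) one has $\pi(\sB(P,r'))=0$ for all $r'$ smaller than the distance $d(P)$ from $P$ to the grid, and by the paper's convention $a/0=+\infty$ the defining ratio of $\sM(\beta_n)$ is then $+\infty$ on $[2\beta_n,d(P))$ whenever $d(P)>2\beta_n$, so $P\notin\sM(\beta_n)$. You correctly note that $\ell(P,P_{\theta^\star})\le 2\eta_n$, but you then silently assert $2\eta_n\le 2\beta_n$ to salvage the worst case $r=2\beta_n$. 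That assertion fails for $1<K<2$: since $\beta_n=\tfrac{K}{2}\eta_n+\sqrt{18.6(k+1)/n}$, the inequality $\eta_n\le\beta_n$ requires $\eta_n\le\tfrac{2}{2-K}\sqrt{18.6(k+1)/n}$, which is not a consequence of the hypotheses (and typically fails when the entropy $\widetilde D$ is large, making $\eta_n\gg\sqrt{(k+1)/n}$). So the inclusion fails for $K$ near $1$.

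The fix is precisely what the paper does: instead of trying to place $P$ itself in $\sM(\beta_n)$, you should show that the grid point $P_{\theta^\star}\in\Theta[\eta_n,\delta_n]$ you constructed belongs to $\sM(\beta_n)$. This is immediate in principle since $\pi(\sB(P_{\theta^\star},r'))\ge\pi(\{P_{\theta^\star}\})=e^{-L_{\theta^\star}}>0$ for all $r'>0$, so the ratio $\pi(\sB(P_{\theta^\star},2r'))/\pi(\sB(P_{\theta^\star},r'))\le e^{L_{\theta^\star}}$ is always finite, and the condition $r_n(\beta_n,P_{\theta^\star})\le a_1^{-1}\beta_n$ reduces exactly to your $L_{\theta^\star}\le\gamma n\beta_n^2$ with no constraint on the relative sizes of $\eta_n$ and $\beta_n$. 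Once $P_{\theta^\star}\in\sM(\beta_n)$, you conclude through $\inf_{P'\in\sM(\beta_n)}\ell(\overline P\et,P')\le\ell(\overline P\et,P_{\theta^\star})\le\ell(\overline P\et,P)+2\eta_n$ and take the infimum over $P\in\sM_n(K)$. Note also that you implicitly need the bound on $|(j_0,\gj)|_\infty$ coming from $P\in\sM_n(K)$ to control $L_{\theta^\star}$ (the paper's bound $|(j_0,\gj)|_\infty\le 1+J_n$); your sketch gestures at this but should make the step explicit, since $\theta^\star$ depends on $P$, not directly on membership in $\sM_n(K)$, and one must propagate the constraint through the rounding $\sigma\mapsto\overline\sigma$, $\gm\mapsto\overline\gm$.
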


Let us now comment on this result. The radius $\ray_{n}$ is the sum of three main terms, omitting the dependency with respect to $\xi$. The first one, $\inf_{P\in\sM_{n}(K)}\ell(\overline P\et,P)$, corresponds to the approximation of $\overline P\et$ by an element of $\sM$ whose location and scale parameters satisfy the constraints given in \eref{def-sMK}. The quantity $\eta_{n}$, involved in the second term, usually corresponds to the minimax rate for solely estimating a density $p\in\cM_{0}$ from an $n$-sample. Finally, the third term $\sqrt{(k+1)/n}$ corresponds to the rate we would get for solely estimating the location and translation parameters $(\gm,\sigma)\in\R^{k+1}$ when the density $p$ is known.


Let us now provide some examples for which our condition~\eref{app00} is satisfied. We start with an example where the densities in $\cM_{0}$ are smooth. 
\begin{lem}\label{lem-app00}
Assume that the set $\cM_{0}$ consists of densities $p$ that are supported on $[0,1]^{k}$, satisfy $\sup_{p\in\cM_{0}}\norm{p}_{\infty}\le L_{0}$ and 
\begin{equation}\label{app00b}
\sup_{p\in\cM_{0}}\ab{p(\gx)-p(\gx')}\le L_{1}\ab{\gx-\gx'}^{s}\quad \text{for all $\gx,\gx'\in \R^{k}$,}
\end{equation}
with constants $L_{0},L_{1}>0$ and $s\in (0,1]$. Then \eref{app00} is satisfied with $A=L_{1}\vee [(1+L_{1}k^{s/2}+L_{0})/2]$.
\end{lem}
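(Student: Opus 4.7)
The plan is to split the TV distance by the triangle inequality
\[
\ell\bigl(P_{(p,\bs 0,1)},P_{(p,\gm,\sigma)}\bigr)\;\le\;\underbrace{\ell\bigl(P_{(p,\bs 0,1)},P_{(p,\bs 0,\sigma)}\bigr)}_{\text{pure scaling}}+\underbrace{\ell\bigl(P_{(p,\bs 0,\sigma)},P_{(p,\gm,\sigma)}\bigr)}_{\text{pure translation (at scale $\sigma$)}}
\]
and to bound each piece by invoking two common ingredients: Hölder continuity on the intersection of supports, and the observation that a Hölder density supported on $[0,1]^k$ automatically vanishes on $\partial[0,1]^k$ (by taking limits in \eref{app00b}), which will allow me to control the integrand on the symmetric difference of supports without losing the Hölder rate. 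The bound $\wedge 1$ is trivial since the TV is always bounded by $1$, and it will be used to dispose of the regimes where $|\gm/\sigma|_\infty$ or $1-1/\sigma$ is not small.

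For the translation piece, the change of variable $z=(y-\gm)/\sigma$ in the second density yields
\[
\ell\bigl(P_{(p,\bs 0,\sigma)},P_{(p,\gm,\sigma)}\bigr)\;=\;\tfrac12\int_{\R^{k}}|p(z)-p(z-\gm/\sigma)|\,dz.
\]
The integrand is supported on $[0,1]^{k}\cup\bigl(\gm/\sigma+[0,1]^{k}\bigr)$, which has Lebesgue measure at most $2$. On the intersection of the two cubes, the Hölder assumption gives the pointwise bound $L_{1}|\gm/\sigma|^{\alpha}\le L_{1}k^{\alpha/2}|\gm/\sigma|_{\infty}^{\alpha}$. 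On the symmetric difference (where exactly one of $p(z)$, $p(z-\gm/\sigma)$ is zero), the segment joining $z$ to $z-\gm/\sigma$ crosses $\partial[0,1]^{k}$ at some point $z^{*}$ with $p(z^{*})=0$, so the integrand is again at most $L_{1}|\gm/\sigma|^{\alpha}$. Integrating yields $\le L_{1}k^{\alpha/2}|\gm/\sigma|_{\infty}^{\alpha}$, which fits inside the stated $A(|\gm/\sigma|_{\infty})^{\alpha}\wedge 1$ term.

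For the scaling piece, I would split the integration domain into $[0,1]^{k}$ and $[0,\sigma]^{k}\setminus[0,1]^{k}$. On $[0,1]^{k}$ I write
\[
|p(y)-\sigma^{-k}p(y/\sigma)|\;\le\;\sigma^{-k}|p(y)-p(y/\sigma)|+(1-\sigma^{-k})p(y),
\]
and apply $|p(y)-p(y/\sigma)|\le L_{1}|y|^{\alpha}(1-1/\sigma)^{\alpha}\le L_{1}k^{\alpha/2}(1-1/\sigma)^{\alpha}$ together with $\int_{[0,1]^k}p=1$. On the annulus $[0,\sigma]^{k}\setminus[0,1]^{k}$, the change of variable $z=y/\sigma$ reduces the contribution to $\int_{[0,1]^{k}\setminus[0,1/\sigma]^{k}}p(z)\,dz$; the key observation there is that any $z$ in this annulus has some coordinate $z_{i}\in[1/\sigma,1]$, hence distance to $\partial[0,1]^{k}$ at most $1-1/\sigma$, so by boundary-vanishing $p(z)\le L_{1}(1-1/\sigma)^{\alpha}$, giving a bound $\le L_{1}(1-1/\sigma)^{\alpha}(1-\sigma^{-k})\le L_{1}(1-1/\sigma)^{\alpha}$.

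The main obstacle will be the residual term $(1-\sigma^{-k})\int_{[0,1]^{k}}p$ arising from the scaling part, which naively produces an unwanted factor of order $k$ when converted back to $(1-1/\sigma)^{\alpha}$ via $(1-\sigma^{-k})\le k(1-1/\sigma)$. I would handle it by splitting on the value of $(1-1/\sigma)^{\alpha}$: when $(1-1/\sigma)^{\alpha}\ge 1/A$ the $\wedge 1$ clause of \eref{app00} is automatic, and otherwise $(1-\sigma^{-k})$ is absorbed into $L_{0}(1-1/\sigma)^{\alpha}$ using $L_{0}\ge 1$ (a density on $[0,1]^{k}$ has supremum at least $1$) and the boundary-vanishing bound on the annulus. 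Summing the translation and scaling estimates and checking that the resulting constant is dominated by $A=L_{1}\vee[(1+L_{1}k^{\alpha/2}+L_{0})/2]$ completes the proof.
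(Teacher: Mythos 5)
Your decomposition is the one the paper uses: split by the triangle inequality into a pure scaling piece and a pure translation piece, apply Hölder continuity on the overlap of supports, and control what remains separately. Your boundary-vanishing observation is a genuine refinement for the annulus $[0,1]^{k}\setminus[0,1/\sigma]^{k}$ (and for the symmetric difference in the translation piece): the paper instead bounds the annulus contribution more crudely by $\tfrac{L_{0}}{2}(1-\sigma^{-k})$ using only $\norm{p}_{\infty}\le L_{0}$.

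The difficulty you flag is real, but your proposed case split does not close it, and the paper's own argument has the same hole. Both you and the paper introduce a residual $\tfrac12(1-\sigma^{-k})$ via $|p(y)-\sigma^{-k}p(y/\sigma)|\le\sigma^{-k}|p(y)-p(y/\sigma)|+(1-\sigma^{-k})p(y)$; the paper's last displayed inequality then silently asserts $1-\sigma^{-k}\le(1-1/\sigma)^{\alpha}$, which is false in general (for $\alpha=1$ and $\sigma=1+1/k$, one has $1-\sigma^{-k}\to 1-e^{-1}$ while $(1-1/\sigma)^{\alpha}\to 0$ as $k\to\infty$). In your ``otherwise'' branch, where $(1-1/\sigma)^{\alpha}<1/A$, the best generic estimate is $1-\sigma^{-k}\le k(1-1/\sigma)\le k(1-1/\sigma)^{\alpha}$, and the resulting coefficient of order $k$ is not absorbed by $A=L_{1}\vee[(1+L_{1}k^{\alpha/2}+L_{0})/2]$, which grows at most like $k^{\alpha/2}$. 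A way to actually remove the obstruction is to avoid the additive split of $p(y)$ entirely: since $\ell\bigl(P_{(p,\bs{0},1)},P_{(p,\bs{0},\sigma)}\bigr)=\int\bigl[\sigma^{-k}p(y/\sigma)-p(y)\bigr]_{+}\,dy$ and $\sigma^{-k}p(y/\sigma)\le p(y/\sigma)$ for $\sigma\ge 1$, the integrand on $[0,1]^{k}$ is already at most $\bigl[p(y/\sigma)-p(y)\bigr]_{+}\le L_{1}|y|^{\alpha}(1-1/\sigma)^{\alpha}$ with no leftover $(1-\sigma^{-k})$ term; combined with your boundary-vanishing bound on the annulus, this gives a scaling piece of order $L_{1}(1+k^{\alpha/2})(1-1/\sigma)^{\alpha}$ and dissolves the problem.
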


Nevertheless, condition~\eref{app00} may also be satisfied for families $\cM_{0}$ of densities which are not smooth, as shown in  Lemma~\ref{reg00} below. It makes it possible to consider the following example.
\begin{exa}\label{examonotone}
We consider here the situation where $k=1$ and $\cM_{0}$ is the set of all nonincreasing densities on $[0,1]$ that are bounded by $B>1$. Then, $\sM$ consists of all the probabilities whose densities are supported on intervals $I$ with positive lengths, nonincreasing on $I$ and which are bounded by $B/\mu(I)$. 
Birman and Solomjak~\citeyearpar{MR0217487} proved that $\cM_{0}$ satisfies Assumption~\ref{hypo-entro} with $\widetilde D(\eta)$ of order $(1/\eta)\vee 1$ (up to some constant that depends on $B$). We deduce from~\eref{eq-etan} that $\eta_{n}$ is therefore of order $n^{-1/3}$. Besides, it follows from Lemma~\ref{reg00} below that \eref{app00} is satisfied with $A=B$ and $s=1$. We may therefore apply Corollary~\ref{cor-Entrop}. For a value of $K$ large enough compared to 1, $\Lambda_{n}$ defined by \eref{def-sMK} is larger than $\exp\cro{CK^{2}n^{1/3}}$ for some constant $C>0$ (depending on $A$). In particular, if $X_{1},\ldots,X_{n}$ are i.i.d.\ with a density of the form
\[
x\mapsto p\et(x)=\frac{1}{\sigma\et}p\pa{\frac{x-m\et}{\sigma\et}}
\]
where $p\in\cM_{0}$, $|m\et/\sigma\et|\le \exp\cro{CK^{2}n^{1/3}}$ and 
\[
\exp\cro{-\exp\cro{CK^{2}n^{1/3}}}\le \sigma\et \le \exp\cro{\exp\cro{CK^{2}n^{1/3}}},
\]
\eref{eq-cor-Entrop0} is satisfied with  $r_{n}$ of order $C'n^{-1/3}$ where the constant $C'>0$ only depends on $\xi,K,B$ but not on $m\et$ and $\sigma\et$. This means that the concentration properties of $\widehat \pi_{\bsX}$ hold true uniformly over a huge range of translation and scale parameters $\gm$ and $\sigma$ when $n$ is large enough.

\begin{lem}\label{reg00}
Let $p$ be a nonincreasing density on $(0,+\infty)$. For all $\sigma\ge 1$
\begin{equation}\label{eq-reg002}
\frac{1}{2}\int_{\R}\ab{\frac{1}{\sigma}p\pa{\frac{x}{\sigma}}-p(x)}dx\le \pa{1-\frac{1}{\sigma}}.
\end{equation}
If, furthermore, $p$ is bounded by $B\ge 1$, for all $m\in\R$, 
\begin{equation}\label{eq-reg001}
\frac{1}{2}\int_{\R}\ab{p(x)-p(x-m)}dx\le (|m|B)\wedge 1.
\end{equation}
In particular, for all $m\in\R$ and $\sigma\ge 1$, 
\begin{equation}\label{eq-reg003}
\frac{1}{2}\int_{\R}\ab{\frac{1}{\sigma}p\pa{\frac{x-m}{\sigma}}-p(x)}dx\le \cro{B\ab{\frac{m}{\sigma}}+\pa{1-\frac{1}{\sigma}}}\wedge 1 .
\end{equation}
\end{lem}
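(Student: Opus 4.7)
I would prove the three inequalities in the order stated, with each feeding into the next. Only \eref{eq-reg002} requires a real idea; the other two follow by elementary splitting and the triangle inequality in $\L_1$.

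For \eref{eq-reg002}, set $q(x)=(1/\sigma)p(x/\sigma)$ and use the decomposition
\[
q(x)-p(x)\;=\;\frac{1}{\sigma}\bigl[p(x/\sigma)-p(x)\bigr]\;-\;\Bigl(1-\frac{1}{\sigma}\Bigr)p(x).
\]
The first summand on the right is nonnegative because $\sigma\ge 1$ and $p$ is nonincreasing (and vanishes on $(-\infty,0]$), and the second is trivially nonnegative. Hence $(q-p)_+\le(1/\sigma)\bigl[p(x/\sigma)-p(x)\bigr]$ pointwise. Integrating and using the change of variable $y=x/\sigma$ (which gives $\int_{\R}p(x/\sigma)dx=\sigma$) bounds $\int_{\R}(q-p)_+dx$ by $(1/\sigma)(\sigma-1)=1-1/\sigma$. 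Since $p$ and $q$ are both densities, $\int_{\R}(q-p)dx=0$, so $(1/2)\int_{\R}|q-p|dx=\int_{\R}(q-p)_+dx$, which is the claimed bound.

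The inequality \eref{eq-reg001} is a direct computation. Taking $m\ge 0$ by symmetry, I split the integral over $\R$ at $0$ and at $m$. The contribution from $(-\infty,0)$ vanishes; on $[0,m]$ one has $p(x-m)=0$ so the integrand equals $p(x)$ and integrates to at most $mB$; on $[m,\infty)$ monotonicity yields $p(x)\le p(x-m)$, so the integrand equals $p(x-m)-p(x)$ and integrates again to $\int_0^m p\le mB$. Halving gives $|m|B$, and the clipping at $1$ is automatic since total variation never exceeds $1$.

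Finally, \eref{eq-reg003} follows from the triangle inequality in $\L_1$ applied through the intermediate density $x\mapsto(1/\sigma)p(x/\sigma)$. One of the two resulting terms is bounded by \eref{eq-reg002} by $1-1/\sigma$; the other, after the change of variable $u=x/\sigma$, is exactly the $\L_1$-distance between $p$ and its translate by $m/\sigma$, bounded via \eref{eq-reg001} by $B|m/\sigma|$. The only nontrivial step in the whole argument is the algebraic decomposition in the first part; I foresee no further obstacle.
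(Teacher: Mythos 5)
Your proof is correct and follows essentially the same route as the paper's: all three inequalities reduce to the same interpolation through $x\mapsto(1/\sigma)p(x/\sigma)$ and the same monotonicity/support observations. The only small divergence is in \eref{eq-reg002}: the paper applies the triangle inequality through $(1/\sigma)p(x)$, integrates both nonnegative pieces to get $2(1-1/\sigma)$, and then halves; you instead observe that the same decomposition exhibits $q-p$ as a difference of two nonnegative functions, bound $(q-p)_+$ by the first alone, and invoke the identity $\tfrac12\int|q-p|=\int(q-p)_+$ for densities. Both give the identical bound, so this is a cosmetic streamlining rather than a different argument.
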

\end{exa}

\subsection{Estimating a parameter under sparsity}
Let us consider a parametric dominated  model $\sM=\ac{P_{\gtheta}=p_{\gtheta}\cdot\mu,\; \bst\in \R^{k}}$ where the dimension $k$ of the parameters  is large. 
We presume, even though this might not be true, that the data are i.i.d.\ with distribution $P_{\bst\et}\in\sM$ and that the coordinates of the true parameter $\bst\et=(\theta_{1}\et,\ldots,\theta_{k}\et)$ are all zero except for a small number of them. Our aim is to estimate $P_{\bst\et}$ from the observation of $\etc{X}$ by using the squared Hellinger loss. 

To tackle this problem, we partition the model $\sM$ into the sub-models $\{\sM_{m},\; m\subset \{1,\ldots,k\}\}$ where $\sM_{m}$ consists of those distributions $P_{\bst}\in\sM$ for which the coordinates of $\bst=(\theta_{1},\ldots,\theta_{k})$ are all zero except those with an index $i\in m$. We denote by $\Theta_{m}$ the set of such parameters, so  that $\sM_{m}=\{P_{\bst},\; \bst\in \Theta_{m}\}$, and we use the conventions $\Theta_{\vide}=\{\0\}$ and $\sM_{\vide}=\{P_{\0}\}$. Given some positive number $R>0$, 
we equip each parameter space $\Theta_{m}$, $m\subset \{1,\ldots,k\}$, with the uniform distribution $\nu_{m}$ on $\Theta_{m}(R)=[-R,R]^{k}\cap \Theta_{m}$ when $m\ne \vide$ and the Dirac mass $\nu_{\vide}=\delta_{\0}$ at $\0\in\R^{k}$ when $m=\vide$. We may then define on $\R^{k}=\bigcup_{m\subset \{1,\ldots,k\}}\Theta_{m}$, the hierarchical prior
\begin{equation}\label{def-nu}
\nu=\sum_{m\subset \{1,\ldots,k\}}e^{-L_{m}}\nu_{m}\quad \text{with}\quad L_{m}=|m|\log k+k\log\pa{1+\frac{1}{k}}.
\end{equation}

We endow $\sM$ with the $\sigma$-algebra and the prior $\pi$ as described in Section~\ref{sect-scpm}. Besides, we assume that there exists $s\in (0,1]$ and a positive number $B_{k}=B_{k}(R)$, possibly depending on $k$ and $R$ (although we drop the dependency with respect to $R$), such that 
\begin{equation}\label{cond-hell}
h^{2}\pa{P_{\bst},P_{\bst'}}\le B_{k}\ab{\bst-\bst'}_{\infty}^{2s}\quad \text{for all $\bst,\bst'\in [-R,R]^{k}$.}
\end{equation}
The following result is proven in Section~\ref{Proof-cor-sparcity}.
\begin{prop}\label{cor-sparcity}
Assume that 
\[
\map{p}{E\times \R^{k}}{\R_{+}}{(x,\bst)}{p_{\bst}(x)}
\]
is measurable. If $RB_{k}^{1/s}\ge 1$ there exists a numerical constant $\kappa_{0}'>0$ such that for any distribution $\gP\et$ and $\xi>0$
\[
\E\cro{\widehat \pi_{\bsX}^{h}\pa{\co{\sB}(\overline P\et,\kappa_{0}'\ray)}}\le 2e^{-\xi}
\]
where 
\begin{equation}\label{def-r-cor-sparcity}
r=\inf_{m\subset \{1,\ldots,k\}}\cro{\inf_{\bst\in \Theta_{m}(R)}\ell(\overline P\et,P_{\bst})+\frac{|m|\log\pa{2kR(nB_{k})^{1/s}}+\xi}{n}}.
\end{equation}
\end{prop}

Let us now comment on this result. First of all, the mapping 
\[
R\mapsto \sup\ac{\frac{h^{2}\pa{P_{\bst},P_{\bst'}}}{\ab{\bst-\bst'}_{\infty}^{s}},\; \bst\neq \bst,'\; \bst,\bst'\in [-R,R]^{k}}
\]
being nondecreasing, our condition $RB_{k}^{1/s}=R[B_{k}(R)]^{1/s}\ge 1$ is always satisfied for a value of $R$ sufficiently large. 

When $B_{k}$ does not increase faster than a power of $k$, the radius $r$ given in \eref{def-r-cor-sparcity} only depends logarithmically on the dimension $k$ of the parameter space, as expected. 

Let us now illustrate Proposition~\ref{cor-sparcity} by choosing some specific models $\sM=\{P_{\bst},\; \bst\in\R^{k}\}$. If $P_{\bst}$ is the Gaussian distribution with mean $\bst\in\R^{k}$ and covariance matrix $\sigma^{2} I_{k}$, where $I_{k}$ denotes the $k\times k$ identity matrix,  
\begin{align*}
h^{2}(P_{\bst},P_{\bst'})=1-\exp\cro{-\frac{\ab{\bst-\bst'}^{2}}{8\sigma^{2}}}\le \frac{\ab{\bst-\bst'}^{2}}{8\sigma^{2}}\le \frac{k\ab{\bst-\bst'}_{\infty}^{2}}{8\sigma^{2}}.
\end{align*}
Then, inequality \eref{cond-hell} is satisfied with $B_{k}=k/(8\sigma^{2})$ and $s=2$. In particular, our condition $RB_{k}^{1/s}\ge 1$ is equivalent to $R\ge 2\sigma\sqrt{(2/k)}$. In this case, the value of $r$ given by~\eref{def-r-cor-sparcity} is of order 
\[
\inf_{m\subset \{1,\ldots,k\}}\cro{\inf_{\bst\in \Theta_{m}(R)}\ell(\overline P\et,P_{\bst})+\frac{|m|\log\pa{knR/\sigma}+\xi}{n}}.
\]
More generally, if $\sM=\{P_{\bst},\bst\in\R^{k}\}$ is a regular statistical model with a nonsingular Fisher information matrix $\gJ(\bst)$ for all $\bst\in \R^{k}$, we know from the book of Ibragimov and Has'minski\u{\i}~\citeyearpar{MR620321}[Theorem 7.1 p.81]  that for all $\bst,\bst'\in\R^{k}$ such that $\bst,\bst'\in [-R,R]^{k}$
\[
h^{2}(P_{\bst},P_{\bst'})\le \frac{\ab{\bst-\bst'}^{2}}{8}\sup_{\bst''\in\R^{k}, \ab{\bst''}_{\infty}\le R}{\rm tr}\pa{\gJ(\bst'')}.
\]
Then, Assumption~\eref{cond-hell} holds with $s=2$ and we may take
\[
B_{k}=\frac{k^{2}}{8}\sup_{\bst''\in\R^{k}, \ab{\bst''}_{\infty}\le R}\varrho\pa{\gJ(\bst'')}
\]
where $\varrho\pa{\gJ(\bst'')}$ denotes the largest eigenvalue of the matrix $\gJ(\bst'')$. This value is independent of $\bst''$ when $\sM$ is a translation model. 

Finally note that the second term in \eref{def-r-cor-sparcity} only increases logarithmically with respect to $R$, at least when $B_{k}=B_{k}(R)$ does not increase faster than a power of $R$. By taking larger values of $R$ one may therefore considerably enlarge the sizes of the cubes $\Theta_{m}(R)$, and therefore diminish the approximation term in \eref{def-r-cor-sparcity}, while only slightly increasing the second  term $[|m|\log(2kR(nB_{k})^{1/s})+\xi]/n$.

\section{Some tools for evaluating $\ray_{n}(\beta,P)$}\label{sect-copaic}
The aim of this section is to provide some mathematical results that allow one to bound the quantity $\ray_{n}(\beta,P)$ from above, or at least evaluate its order of magnitude, when $n$ is sufficiently large. Throughout this section, we consider a parametric statistical model $\sM=\{P_{\gtheta},\; \gtheta\in \Theta\}$ where the parameter space $\Theta\subset \R^{k}$ is endowed with a prior $\nu$ which admits a density $q$ with respect to the Lebesgue measure on $\R^{k}$. In order to use the definition~\eref{df-epsn} of the quantity $\ray_{n}(\beta,P)$, we assume that we have at disposal a family $\sT(\ell,\sM)$ that satisfies our Assumption~\ref{Hypo-1}, which provides us with a value of $a_{1}>0$, as well as a value $\gamma$ that satisfy the requirements of our main theorems. Our aim is to bound $\ray_{n}(\beta,P)$ as a function of $a_{1},\gamma,\beta, k$ and $n$ under suitable assumptions on the density $q$ and the behaviour of the loss $\ell$. Once $\ell$ and $\sT(\ell,\sM)$ are given,  $a_{1}$ and $\gamma$ can be considered as fixed numerical constants. The value of $\beta$ can also be considered as a numerical constant when Theorem~\ref{main2} applies. Otherwise, it can be chosen of order $\sqrt{k/n}$ as in our Example~\ref{ex-euclidien}.

\subsection{Bounding $\ray_{n}(\beta,P_{\gtheta})$ in parametric models}\label{sect-BrnNA}
In what follows, $\ab{\cdot}_{*}$ denotes some arbitrary norm on $\R^{k}$ and $\cB_{*}(\gx,z)$ the corresponding closed ball centered at $\gx\in\R^{k}$ with radius $z\ge 0$. 
\begin{ass}\label{Ass-param}
Let $\gtheta\et$ be an element of $\Theta\subset \R^{k}$.  
\begin{listi}
\item\label{Ass-param-i} There exist positive numbers $\underline a,\overline a$ and $\es$ such that 
\begin{equation}
\underline a\ab{\gtheta-\gtheta\et}_{*}^{s}\le \ell(\gtheta,\gtheta\et)\le \overline a\ab{\gtheta-\gtheta\et}_{*}^{s}\quad\mbox{for all }\gtheta\in \Theta.
\label{eq-conEh}
\end{equation}
\item\label{Ass-param-ii} There exists a positive nonincreasing function  $\upsilon_{\gtheta}$ on $\R_{+}$ such that 
\begin{equation}
\nu(\cB_{*}(\gtheta\et,2x))\le\upsilon_{\gtheta\et}(x)\nu(\cB_{*}(\gtheta\et,x))\quad\mbox{for all }x>0.
\label{eq-hypnu}
\end{equation}
\end{listi}
\end{ass}
Under Assumption~\ref{Ass-param}-\ref{Ass-param-i}, the loss function behaves like a power of a norm between the parameters. 

The following result is an extension of Proposition~10 in Baraud and Birg\'e~\citeyearpar{BarBir2020}. It was established there for the special case of the squared Hellinger loss and we provide here an extension to an arbitrary one. Since the proof follows the same lines, we omit it.
%
\begin{prop}\label{casconv}
Under Assumption~\ref{Ass-param},
\begin{align}
\ray_{n}(\beta,P_{\gtheta\et})\le\inf\ac{r\ge \frac{1}{n\beta a_{1}},\; r\ge \frac{\varrho_{0}\log\cro{\upsilon_{\gtheta\et}\pa{[r/\overline a]^{1/\es}}}}{\gamma n \beta a_{1}}}\label{eq-eta-param0}
\end{align}
%
with  $\varrho_{0}=1+\log(2\overline a/\underline a)/[\es \log 2]$. If $\upsilon_{\gtheta\et}\equiv \upsilon>0$, then 
\begin{equation}
\ray_{n}(\beta,P_{\gtheta\et})\le \frac{\pa{\varrho_{0}\log \upsilon}\vee 1}{a_{1} n \gamma\beta }.
\label{eq-eta-param9}
\end{equation}

If Assumption~\ref{Ass-param}-\ref{Ass-param-i} is satisfied and if the parameter space $\gTheta$ is convex and $q$  satisfies 
\begin{equation}
\underline b\le q(\gtheta)\le \overline b\quad\mbox{for all }\gtheta\in \gTheta
\quad\text{with }0<\underline b\le \overline b,
\label{eq-densborn}
\end{equation}
then Assumption~\ref{Ass-param}-\ref{Ass-param-ii} holds with $\upsilon_{\gtheta\et}\equiv 2^{k}(\overline b/\underline b)$. Consequently, 
\begin{equation}
\ray_{n}(\beta,P_{\gtheta\et})\le \frac{\varrho_{1}}{a_{1}\gamma}\frac{k}{n\beta}\quad \text{with}\quad \varrho_{1}=\cro{\varrho_{0}\log\left(2\left[\overline b/\underline b\right]^{1/k}\right)}\vee 1.
\label{eq-eta-param}
\end{equation}
\end{prop}
When Assumption~\ref{Ass-param}-\ref{Ass-param-i}  is satisfied and $\nu$ admits a density which is bounded away from 0 and infinity on a convex parameter space $\Theta\subset \R^{k}$, $\ray_{n}(\beta,P_{\gtheta})$ is of order $k/(n\beta)$ for all $\gtheta\in \Theta$. This result may also hold true when the density is not bounded away from 0 as shown in the following example. If $k=1$, $\Theta=[-1,1]$ and $q:\theta\mapsto (t/2)|\theta|^{t-1}\1_{[-1,1]}(\theta)$ with $t\in (0,1)$, Assumption~\ref{Ass-param}-\ref{Ass-param-ii} holds with $\upsilon_{\theta}\equiv 2^{1+t}\left(2^{t}-1\right)^{-1}$ for all $\theta\in [-1,1]$ -- see Baraud and Birg\'e~\citeyearpar{BarBir2020}[Proposition~10]. Then \eref{eq-eta-param9} applies even though $q$ is not bounded from below on the parameter space. In the other direction, when the density $q$ takes very small values in the neighbourhood of the parameter $\gtheta$, the function $\upsilon_{\gtheta}$ may take large values around 0. This is for example the case when $q$ is proportional to $\theta\mapsto \exp\left[-1/\left(2|\theta|^{t}\right)\right]\1_{[-1,1]}(\theta)$, $t>0$, and $\theta=0$. It follows from Baraud and Birg\'e~\citeyearpar{BarBir2020}[Proposition~12] (and its proof) that Assumption~\ref{Ass-param}-\ref{Ass-param-ii} is satisfied with $\upsilon_{\theta}:x\mapsto \exp(c(t)/x^{t})$ for some quantity $c(t)>0$. Applying~\eref{eq-eta-param0} leads to an upper bound on $\ray_{n}(\beta,P_{\theta})$ of order $(n \beta)^{-s/(s+t)}$.

\subsection{Some asymptotic order of magnitude}\label{sect-BrnA}
In Section~\ref{sect-BrnA}, we have given some general tools for controlling the quantity $r_{n}(\beta,P_{\gtheta})$  for a given value of $n$. In this section, we present some sufficient conditions under which $r_{n}(\beta,P_{\gtheta})$ is of order $k/(n\beta)$ at least when $n$ is large enough. These conditions are not the weakest possible ones but they have the advantage to be relatively easy to check on many examples.

\begin{ass}\label{ass-para-1}
The density $q$ is continuous and positive at $\gtheta\et\in \Theta$. The loss function $\ell$ satisfies the following properties for some positive number $s>0$ and a norm  $\ab{\cdot}_{*}$ on $\R^{k}$. 
\begin{listi}
\item\label{ass-para-1i} For all $\eps>0$, there exists $z=z(\eps)>0$ such that 
\[
(1-\eps)\ab{\gtheta-\gtheta\et}_{*}^{s}\le \ell(\gtheta,\gtheta\et)\le (1+\eps)\ab{\gtheta-\gtheta\et}_{*}^{s}\quad \text{for all $\gtheta\in\cB_{*}(\gtheta\et,z)$.}
\]
\item\label{ass-para-1ii} There exists a subset $\cK\subset \Theta$, the interior of which contains $\gtheta\et$, that satisfies for some positive numbers $\underline a_{\cK}$ and $\eta$:
\begin{equation}\label{eq-para-1ii}
\underline a_{\cK}\ab{\gtheta-\gtheta\et}_{*}^{s}\le  \ell(\gtheta,\gtheta\et)\quad \text{for $\gtheta\in \cK$ and for $\gtheta\not \in \cK$}\quad  \ell(\gtheta,\gtheta\et)\ge \eta>0.
\end{equation}
\end{listi}
\end{ass}
Under these assumptions, we establish the following proposition, the proof of which is postponed to Section~\ref{sect-pf-prop-odg}.
\begin{prop}\label{prop-odg}
Under Assumption~\ref{ass-para-1}, at least for $n$ sufficiently large, 
\begin{equation}\label{eq-odg}
r_{n}\pa{\beta,P_{\gtheta\et}}\le \frac{(1+1/s)}{a_{1}\gamma}\frac{k}{n\beta}.
\end{equation}
\end{prop}

\subsection{The case of the squared Hellinger loss on a regular statistical model}\label{sect-RegMod}
Of particular interest is the situation where the statistical model $\sM=\{P_{\gtheta},\; \gtheta\in \Theta\}$, $\Theta\subset \R^{k}$,  is regular. There exists several way of defining a regular model in statistics and we adopt here the definition of Ibragimov and Has'minski\u \i~\citeyearpar{MR620321}. 
\begin{df}\label{df-regulier}
Let $\mu$ be a measure on $(E,\cE)$ and $\Theta$ an open subset of $\R^{k}$. The statistical model $\sM=\{P_{\gtheta}=p_{\gtheta}\cdot\mu,\; \gtheta\in \Theta\}$ is said to be {\em regular} if the family of functions $\{\zeta_{\gtheta}=\sqrt{p_{\gtheta}},\; \theta\in \Theta\}\subset \sL_{2}(E,\cE,\mu)$ satisfies the following properties.
\begin{listi}
\item For $\mu$-almost all $x\in E$, $\gtheta\mapsto \zeta_{\gtheta}(x)$ is continuous.
\item For all $\gtheta\in \Theta$, there exists $\dot{\bs{\zeta}}_{\gtheta}=(\dot{\zeta}_{\gtheta,1},\ldots,\dot{\zeta}_{\gtheta,k}):E\to \R^{k}$ such that 
\[
\int_{E}\ab{\dot{\bs{\zeta}}_{\gtheta}(x)}^{2}d\mu(x)<+\infty
\]
and
\[
\int_{E}\ab{\zeta_{\gtheta+\bs{\epsilon}}(x)-\zeta_{\gtheta}(x)-\scal{\dot{\bs{\zeta}}_{\gtheta}(x)}{\bs{\epsilon}}}^{2}d\mu(x)=o(\ab{\bs{\epsilon}}^{2})\quad \text{when $\ab{\bs{\epsilon}}\to 0$.}
\]
\item  For all $i\in\{1,\ldots,k\}$, the mapping $\gtheta\mapsto \dot{\zeta}_{\gtheta,i}$ is continuous in $ \sL_{2}(E,\cE,\mu)$.
\end{listi}
When the model is regular, the matrix
\[
\gJ(\gtheta)=\pa{4\int_{E}\dot{\zeta}_{\gtheta,i}(x)\dot{\zeta}_{\gtheta,j}(x)d\mu(x)}_{\atop{1\le i\le k}{1\le j\le k}},
\]
is called the {\em Fisher information matrix}. 
\end{df}
The matrix $\gJ(\gtheta)$ is symmetric and nonnegative and we may therefore consider its square root $\gJ^{1/2}(\gtheta)$, that is, the symmetric $(k\times k)$-nonnegative matrix that satisfies $\gJ^{1/2}(\gtheta)\gJ^{1/2}(\gtheta)=\gJ(\gtheta)$.

Regular statistical models enjoy nice metric properties that are described in Proposition~\ref{prop-regulstat} below. For a proof we refer the reader to Ibragimov and Has'minski\u \i~\citeyearpar{MR620321} -- Lemma~7.1 page 65, Theorem~7.6 page 81 and its proof. 
\begin{prop}\label{prop-regulstat}
Let $\Theta$ be an open subset of $\R^{k}$ and $\gtheta\et\in \Theta$. If $\sM=\{P_{\gtheta}=p_{\gtheta}\cdot\mu,\; \gtheta\in \Theta\}$ is regular  and the Fisher information matrix $\gJ(\gtheta\et)$ nonsingular at $\gtheta\et\in \Theta$, Assumption~\ref{ass-para-1}-\ref{ass-para-1i} is satisfied with $\ell=h^{2}$, $s=2$ and for the norm $\ab{\cdot}_{*}$ defined by 
\begin{equation}\label{eq-normh}
\ab{\gx}_{*}=\frac{1}{\sqrt{8}}\ab{\gJ^{1/2}(\gtheta\et)\gx}\quad \text{for all $\gx\in\R^{k}$.}
\end{equation}
Besides, for any compact subset $\cK\subset \Theta$ there exist positive numbers $ \overline a_{\cK},\underline a_{\cK}$ such that 
\begin{equation}\label{eq-regulstat01}
\underline a_{\cK}\ab{\gtheta-\gtheta\et}_{*}^{2}\le h^{2}\pa{\gtheta,\gtheta\et}\le \overline a_{\cK}\ab{\gtheta-\gtheta\et}_{*}^{2}\quad \text{for all $\gtheta\in \cK$.}
\end{equation}
\end{prop}

Using Proposition~\ref{prop-odg}, we immediately infer the following result.
\begin{cor}\label{cor-odgh}
Let $\Theta$ be an open subset of $\R^{k}$. Assume that $\sM=\{P_{\gtheta}=p_{\gtheta}\cdot\mu,\; \gtheta\in \Theta\}$ is regular and the Fisher information matrix $\gJ(\gtheta\et)$ nonsingular at $\gtheta\et\in \Theta\subset \R^{k}$. Assume that there exists a compact set $\cK\subset \Theta$, containing $\gtheta\et$ in its interior, such that $h(\gtheta,\gtheta\et)\ge \eta>0$ for all $\gtheta\not \in \cK$. Furthermore, assume that the density $q$ is continuous and positive at $\gtheta\et$. Then, $r_{n}(\beta,P_{\gtheta\et})\le [3/(2a_{1}\gamma \beta)] (k/n)$, at least for $n$ sufficiently large. 
\end{cor}

\section{Proofs of Theorems~\ref{main1}, ~\ref{main2} and \ref{main1b}}\label{sct-PMR}
Throughout this proof we fix some $\PAP\in\sM$, $\ray,\beta>0$ and use  the following notation: $\cc_{1}=1+\cc$, $\cc_{2}=2+\cc$, 
\[
\cV(\pi,\PAP)=\ac{\ray>0,\pi\pa{\sB(\PAP,\ray)}>0}
\]
and for $\ray\in \cV(\pi,\PAP)$ , $\sB=\sB(\PAP,\ray)$ and $\pi_{\sB}=\cro{\pi(\sB)}^{-1}\1_{\sB}\cdot \pi$.

\subsection{Main parts of the proofs of Theorems~\ref{main1} and~\ref{main2}}
Throughout the proofs of these two theorems we fix some positive number $z$, that will be chosen later on, and $r\ge \ray_{n}(\beta,\PAP)$. Then we set 
\[
A=\ac{\int_{\sM}\exp\cro{-\beta \gT(\bsX,P)}d\pi(P)>z}.
\]
It follows from the definition~\eref{def-piX} of $\widehat \pi_{\bsX}$ that for all $J\in\N$
\begin{align}
\E\cro{\widehat \pi_{\bsX}\pa{\co{\sB}(\PAP,2^{J}\ray)}}&=\E\cro{\widehat \pi_{\bsX}\pa{\co{\sB}(\PAP,2^{J}\ray)}\1_{\co{A}}}+\E\cro{\widehat \pi_{\bsX}\pa{\co{\sB}(\PAP,2^{J}\ray)}\1_{A}}\nonumber\\
&\le \P(\co{A})+\frac{1}{z}\E\cro{\int_{\co{\sB}(\PAP,2^{J}\ray)}\exp\cro{-\beta \gT(\bsX,P)}d\pi(P)}\nonumber\\
&= \P(\co{A})+\frac{1}{z}\int_{\co{\sB}(\PAP,2^{J}\ray)}\E\cro{\exp\cro{-\beta \gT(\bsX,P)}}d\pi(P)\label{eq-02}.
\end{align}
In a first step, we prove that for some well chosen values of $\beta,z,\ray$ and for $J$ large enough, each of the two terms in the right-hand side of~\eref{eq-02} is not larger than $e^{-\xi}$. To achieve this goal, we bound the first term of the right-hand side of~\eref{eq-02} by applying Markov's inequality
\begin{align*}
\P(\co{A})&=\P\cro{\int_{\sM}\exp\cro{-\beta \gT(\bsX,P)}d\pi(P)\le z}\\
&=\P\cro{\cro{\int_{\sM}\exp\cro{-\beta \gT(\bsX,P)}d\pi(P)}^{-1}\ge  z^{-1}}\\
&\le z\E\cro{\frac{1}{\int_{\sM}\exp\cro{-\beta \gT(\bsX,P)}d\pi(P)}}
\end{align*}
and then by using Lemma~\ref{Denom}, we obtain that
\begin{align}
\P(\co{A})\le \frac{z}{\pi^{2}(\sB)} \cro{\int_{\sB^{2}}\exp\cro{-\gL(P,Q)}d\pi_{\sB}(P)d\pi_{\sB}(Q)}^{-1}.\label{eq-borne01}
\end{align}
We therefore have a control of $\P(\co{A})$ by choosing $z$ small enough. We bound the second term of~\eref{eq-02} by using Lemma~\ref{LaplaceZ}. 

We then finish the proofs of Theorems~\ref{main1} and~\ref{main2} as follows. In the context of Theorem~\ref{main1}, we finally establish that for a suitable value of $J$ and all $\PAP\in\sM(\beta)$, 
\begin{align*}
\E\cro{\widehat \pi_{\bsX}\pa{\co{\sB}(\PAP,2^{J}\ray)}}\le 2e^{-\xi}\quad \text{with}\quad r=r(\PAP)=\ell(\overline P\et,\PAP)+a_{1}^{-1}\pa{\beta+\frac{2\xi}{n\beta}}.
\end{align*}
By~\eref{eq-triangle}, $\sB(\PAP,2^{J}r)\subset \sB(\overline P\et,\tau \ell(\overline P\et,\PAP)+\tau 2^{J}r)$ for all $\PAP\in\sM(\beta)$, and consequently $\E\cro{\widehat \pi_{\bsX}\pa{\co{\sB}(\overline P\et,\overline \ray}}\le 2e^{-\xi}$ with 
\begin{align*}
\overline \ray=\overline\ray(\PAP)=\tau\cro{\ell(\overline P\et,\PAP)+2^{J}\ray}=\tau\cro{(1+2^{J})\ell(\overline P\et,\PAP)+2^{J}a_{1}^{-1}\pa{\beta+\frac{2\xi}{n\beta}}}.
\end{align*}
We obtain~\eref{eq-thm01} by monotone convergence, taking a sequence $(\PAP_{N})_{N\ge 0}\subset \sM(\beta)$ such that $\ell(\overline P\et,\PAP_{N})$ is nonincreasing to $\inf_{P\in\sM(\beta)}\ell(\overline P\et,P)$, 
so that 
\begin{align*}
\lim_{N\to +\infty}\overline \ray(\PAP_{N})&=\tau\cro{(1+2^{J})\inf_{\PAP\in\sM(\beta)}\ell(\overline P\et,\PAP)+2^{J}a_{1}^{-1}\pa{\beta+\frac{2\xi}{n\beta}}}\\
&\le \tau(1+2^{J})\cro{\inf_{\PAP\in\sM(\beta)}\ell(\overline P\et,\PAP)+a_{1}^{-1}\pa{\beta+\frac{2\xi}{n\beta}}}
\end{align*}
and \eref{eq-thm01} holds provided that $\kappa_{0}\ge\tau(2^{J}+1)$.

In the context of Theorem~\ref{main2}, we show that for some suitable value of $J$ and all $\PAP\in\sM$,
\begin{align*}
\E\cro{\widehat \pi_{\bsX}\pa{\co{\sB}(\PAP,2^{J}\ray)}}\le 2e^{-\xi}\quad \text{with}\quad r=\ell(\overline P\et,\PAP)+\ray_{n}(\PAP,\beta)+ \frac{2\xi}{n\beta a_{1}},
\end{align*}
and we get~\eref{eq-thm02} by arguing similarly. 

\subsection{Preliminary results}
In the proofs of Theorems~\ref{main1} and~\ref{main2}, we use the following consequence of our Assumption~\ref{Hypo-1}. We may write
\[
\frac{1}{n}\sum_{i=1}^{n}\E\cro{t_{(P,Q)}(X_{i})}=\E_{S}\cro{t_{(P,Q)}(X)}\quad \text{with $S=\overline P\et=\frac{1}{n}\sum_{i=1}^{n}P_{i}\et\in\sP$}
\]
and we deduce from~\eref{eq-SPQ} that for all $P,Q\in\sM$,
\begin{equation}\label{eq-SPQbis}
\frac{1}{n}\sum_{i=1}^{n}\E\cro{t_{(P,Q)}(X_{i})}\le a_{0}\ell(\overline P\et,P)-a_{1}\ell(\overline P\et,Q).
\end{equation}
Besides, using the anti-symmetry property~\ref{cond-1b} we also obtain that 
\begin{equation}\label{eq-SPQter}
\frac{1}{n}\sum_{i=1}^{n}\E\cro{t_{(P,Q)}(X_{i})}\ge a_{1}\ell(\overline P\et,P)-a_{0}\ell(\overline P\et,Q).
\end{equation}

In the proof of Theorems~\ref{main2}, we also use the following consequence of our Assumption~\ref{Hypo-2}. By taking $S=\overline P\et$ and using the convexity of the mapping $u\mapsto u^{2}$, we deduce that for all $P,Q\in\sM$
\begin{align*}
\frac{1}{n}\sum_{i=1}^{n}\Var\cro{t_{(P,Q)}(X_{i})}&=\E_{S}\cro{t_{(P,Q)}^{2}(X)}-\frac{1}{n}\sum_{i=1}^{n}\pa{\E\cro{t_{(P,Q)}(X_{i})}}^{2}\\
&\le \E_{S}\cro{t_{(P,Q)}^{2}(X)}-\pa{\E_{S}\cro{t_{(P,Q)}(X)}}^{2}\\
&=\Var_{S}\cro{t_{(P,Q)}(X)}
\end{align*}
and it derives thus from Assumption 4~\ref{cond-3} that for all $P,Q\in\sM$
\begin{equation}\label{cond-3bis}
\frac{1}{n}\sum_{i=1}^{n}\Var\cro{t_{(P,Q)}(X_{i})}\le a_{2}\cro{\ell(\overline P\et,P)+\ell(\overline P\et,Q)}.
\end{equation}

The proofs of our main results rely on the following lemmas.
\begin{lem}\label{lem-Catoni}
Let $(U,V)$ be a pair of random variables with values in a product space $(E\times F,\cE\otimes \cF)$ and marginal distributions $P_{U}$ and $P_{V}$ respectively. For all measurable function $h$ on $(E\times F,\cE\otimes \cF)$, 
\[
\E_{U}\cro{\frac{1}{\E_{V}\cro{\exp\cro{-h(U,V)}}}}\le \cro{\E_{V}\cro{\frac{1}{\E_{U}\cro{\exp\cro{h(U,V)}}}}}^{-1}.
\]
\end{lem}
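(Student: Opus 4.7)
My plan is to prove the inequality by a single application of the Cauchy--Schwarz inequality in $L^{2}(P_{U}\otimes P_{V})$, chosen so that both resulting $L^{2}$ norms collapse, via Fubini, to the two factors appearing in the statement. The computation is self-dual, and this self-duality is what makes the inequality tight.

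Concretely, set $\phi(u,v)=\exp[-h(u,v)]$ (positive, possibly $+\infty$-valued; one reduces to the positive case by approximation), and define
\[
g(u)=\E_{V}\!\left[\phi(u,V)\right]=\int \phi(u,v)\,dP_{V}(v),\qquad k(v)=\E_{U}\!\left[\phi(U,v)^{-1}\right]=\int \phi(u,v)^{-1}\,dP_{U}(u).
\]
After multiplying both sides of the claim by $\E_{V}[1/\E_{U}[e^{h(U,V)}]]=\int dP_{V}(v)/k(v)$, the desired inequality becomes
\[
X \;\eqd\; \int \frac{dP_{U}(u)}{g(u)}\cdot\int \frac{dP_{V}(v)}{k(v)} \;\le\; 1,
\]
and by Tonelli (all integrands being nonnegative) this quantity equals the double integral $X=\int\!\!\int dP_{U}(u)\,dP_{V}(v)/(g(u)k(v))$.

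The key step is to decompose the integrand as
\[
\frac{1}{g(u)k(v)} \;=\; \frac{\phi(u,v)^{1/2}}{g(u)}\cdot \frac{\phi(u,v)^{-1/2}}{k(v)}
\]
and apply Cauchy--Schwarz with respect to $P_{U}\otimes P_{V}$:
\begin{align*}
X \;=\; \int\!\!\int \frac{\phi^{1/2}}{g(u)}\cdot\frac{\phi^{-1/2}}{k(v)}\,dP_{U}\,dP_{V}
\;\le\; \left(\int\!\!\int \frac{\phi(u,v)}{g(u)^{2}}\,dP_{U}\,dP_{V}\right)^{\!1/2}\!\left(\int\!\!\int \frac{\phi(u,v)^{-1}}{k(v)^{2}}\,dP_{U}\,dP_{V}\right)^{\!1/2}.
\end{align*}
Now each factor simplifies by Fubini using the very definitions of $g$ and $k$: integrating first in $v$,
\[
\int\!\!\int \frac{\phi(u,v)}{g(u)^{2}}\,dP_{U}\,dP_{V}\;=\;\int \frac{1}{g(u)^{2}}\Bigl(\int \phi(u,v)\,dP_{V}(v)\Bigr)dP_{U}(u)\;=\;\int \frac{dP_{U}(u)}{g(u)},
\]
and symmetrically the second factor equals $\int dP_{V}(v)/k(v)$. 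Hence $X\le \sqrt{X}$, which forces $X\le 1$ whenever $X$ is finite, yielding the stated inequality.

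\textbf{Main obstacle.} The only real obstruction is seeing the ``correct'' Cauchy--Schwarz split; once the decomposition $1/(gk)=\phi^{1/2}/g\cdot\phi^{-1/2}/k$ is written down, the rest is bookkeeping. A minor technical point is handling the degenerate cases where $g(u)$ or $k(v)$ vanishes or where $X=+\infty$. If $\int dP_{V}/k=0$ the claimed inequality is trivial (right-hand side $=+\infty$); if instead $\int dP_{V}/k>0$ but $X=+\infty$, then one of the two $L^{2}$ integrals on the right above would have to be infinite, and the same Cauchy--Schwarz bound $X\le \sqrt{X}$ applied after truncating $h$ at level $\pm N$ and letting $N\to\infty$ (monotone convergence) delivers the conclusion in full generality.
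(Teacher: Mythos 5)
The paper does not prove Lemma~\ref{lem-Catoni}; it cites it from Audibert and Catoni (2011), Lemma~4.2, p.~28. Your Cauchy--Schwarz argument is correct, self-contained, and (as far as I can tell) in the same spirit as the original: the decomposition $1/(gk)=(\phi^{1/2}/g)(\phi^{-1/2}/k)$, followed by Fubini collapsing each $L^{2}$ factor to one of the two integrals, yields $X\le\sqrt{X}$ and hence $X\le 1$ once $X<\infty$. This is a nice, clean derivation.

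One small imprecision in the ``main obstacle'' paragraph: the two-sided truncation $h_{N}=(h\wedge N)\vee(-N)$ does not make $g_{N}$ or $k_{N}$ monotone in $N$ (the two one-sided clippings push in opposite directions), so ``monotone convergence'' is not the right tool for passing to the limit. What does work: with the two-sided truncation one has $g_{N}(u)\to g(u)$ and $k_{N}(v)\to k(v)$ pointwise in $[0,+\infty]$, hence $1/g_{N}\to 1/g$ and $1/k_{N}\to 1/k$ pointwise; Fatou then gives $\liminf_{N}\int dP_{U}/g_{N}\ge\int dP_{U}/g$ and $\liminf_{N}\int dP_{V}/k_{N}\ge\int dP_{V}/k$, so $X\le\liminf_{N}X_{N}\le 1$ since each $X_{N}$ is finite (the truncated $\phi_{N}$ being bounded above and below). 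This is only a technicality: in the paper's application the functions $t_{(P,Q)}$ are bounded (Assumption~\ref{Hypo-1}-\ref{cond-4}), so $h$ is bounded, all the integrals are finite, and the core Cauchy--Schwarz step already suffices.
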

This lemma is proven in Audibert and Catoni~\citeyearpar{audibert2011linear} [Lemma 4.2, page 28].

\begin{lem}\label{LaplaceZ}
For $P,Q\in\sM$, we set 
\[
\gM(P,Q)=\log\cro{\int_{\sM}\E\cro{\exp\cro{\beta\pa{\cc\gT(\bsX,P,Q')-\cc_{1}\gT(\bsX,P,Q)}}d\pi(Q')}}.
\]
For all $\ray\in \cV(\pi,\PAP)$ and $P\in\sM$, 
\begin{align}
\E\cro{\exp\cro{-\beta\gT(\bsX,P)}}\le \frac{1}{\pi(\sB)}\cro{\int_{\sB}\exp\cro{-\gM(P,Q)}d\pi_{\sB}(Q)}^{-1}\label{eq-LaplaceZ}.
\end{align}
\end{lem}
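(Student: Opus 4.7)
The plan is to combine Jensen's inequality on the definition of $\gT(\bsX,P)$ with the Audibert--Catoni Lemma~\ref{lem-Catoni}. The starting point is the observation that $\gT(\bsX,P)$ is defined as an integral against the probability measure $\widetilde\pi_{\bsX}(\cdot|P)$, so by Jensen applied to $\exp$,
\[
\exp\cro{-\beta\gT(\bsX,P)} \le \int_{\sM}\exp\cro{-\beta\gT(\bsX,P,Q)}\,d\widetilde\pi_{\bsX}(Q|P).
\]
Plugging in the explicit density of $\widetilde\pi_{\bsX}(\cdot|P)$ with respect to $\pi$ and using $\lambda-\beta=\cc\beta$ and $\lambda=\cc_{1}\beta$, the right-hand side becomes the ratio
\[
\frac{N(\bsX)}{D(\bsX)}:=\frac{\int_{\sM}\exp\cro{\cc\beta\gT(\bsX,P,Q)}\,d\pi(Q)}{\int_{\sM}\exp\cro{\cc_{1}\beta\gT(\bsX,P,Q)}\,d\pi(Q)}.
\]

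Next I would restrict the denominator to $\sB$, which only makes it smaller, and then pass to the conditional prior $\pi_{\sB}$:
\[
D(\bsX)\ge \pi(\sB)\int_{\sB}\exp\cro{\cc_{1}\beta\gT(\bsX,P,Q)}\,d\pi_{\sB}(Q).
\]
At this point a naive second application of Jensen to the reciprocal would flip the inequality in the wrong direction (it yields $\int\exp[\gM(P,Q)]\,d\pi_{\sB}(Q)$, which is larger than the conjectured $\cro{\int\exp[-\gM(P,Q)]\,d\pi_{\sB}(Q)}^{-1}$). Avoiding this loss is the key step, and this is exactly what Lemma~\ref{lem-Catoni} is designed for.

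The trick is to absorb $N(\bsX)$ into the exponent. Setting $U=\bsX$, $V=Q$ with law $\pi_{\sB}$, and
\[
h(U,V)=-\cc_{1}\beta\gT(U,P,V)+\log N(U),
\]
the quantity we need to control is precisely
\[
\E\cro{\frac{N(\bsX)}{D(\bsX)}}\le \frac{1}{\pi(\sB)}\E_{U}\cro{\frac{1}{\E_{V}[\exp(-h(U,V))]}}.
\]
Lemma~\ref{lem-Catoni} then bounds this by $\cro{\E_{V}[1/\E_{U}\exp(h(U,V))]}^{-1}$. It remains to identify $\E_{U}\exp(h(U,V))$: expanding and using Fubini to interchange $\E$ with $\int d\pi(Q')$ (which is where the $Q'$ in the definition of $\gM$ comes from) gives exactly $\exp[\gM(P,V)]$. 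Consequently $\E_{V}[1/\E_{U}\exp(h(U,V))]=\int_{\sB}\exp[-\gM(P,Q)]\,d\pi_{\sB}(Q)$, and assembling the chain yields \eref{eq-LaplaceZ}.

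The only delicate point is the choice of $h$: recognising that one must hide $N(\bsX)$ inside the exponent so that the Audibert--Catoni swap produces $\gM(P,V)$ cleanly is the main obstacle; everything else is bookkeeping and a single Fubini step, whose measurability prerequisites come from Assumption~\ref{Hypo-1}-\ref{cond-1}.
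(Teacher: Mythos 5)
Your proof is correct and takes essentially the same route as the paper's: your function $h(U,V)=-\cc_1\beta\gT(U,P,V)+\log N(U)$ is exactly the paper's $-I(\bsX,P,Q)$, and the chain Jensen $\to$ restrict denominator to $\sB$ $\to$ factor out $\pi(\sB)$ $\to$ Audibert--Catoni with this $h$ $\to$ Fubini to identify $\E_U[\exp h]$ with $\exp\gM(P,V)$ is the same argument.
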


\begin{proof}
Let $\ray\in \cV(\pi,\PAP)$. For $P,Q\in\sM$, we set 
\[
I(\bsX,P,Q)=\cc_{1}\beta\gT(\bsX,P,Q)-\log\int_{\sM}\exp\cro{\cc\beta\gT(\bsX,P,Q')}d\pi(Q').
\]
Then, 
\begin{align}
&\E\cro{\exp\cro{-I(\bsX,P,Q)}}\nonumber\\
&=\E\cro{\exp\cro{-\cc_{1}\beta\gT(\bsX,P,Q)+\log\int_{\sM}\exp\cro{\cc\beta\gT(\bsX,P,Q')}d\pi(Q')}}\nonumber\\
&=\E\cro{\int_{\sM}\exp\cro{\cc\beta\gT(\bsX,P,Q')-\cc_{1}\beta\gT(\bsX,P,Q)}d\pi(Q')}\nonumber\\
&=\exp\cro{\gM(P,Q)}.\label{num-01}
\end{align}

Since $\lambda=\cc_{1}\beta=(1+\cc)\beta$, it follows from the convexity of the exponential that 
\begin{align*}
\E\cro{\exp\cro{-\beta \gT(\bsX,P)}}&=\E\cro{\exp\cro{\int_{\sM}[-\beta\gT(\bsX,P,Q)]d\widetilde \pi_{\bsX}(Q)}}\\
&\le \E\cro{{\int_{\sM}{\exp\cro{-\beta \gT(\bsX,P,Q)}d\widetilde \pi_{\bsX}(Q)}}}\\
&=\E\cro{\frac{\int_{\sM}\exp\cro{\cc\beta\gT(\bsX,P,Q)}d\pi(Q)}{\int_{\sM}\exp\cro{\cc_{1}\beta\gT(\bsX,P,Q)}d\pi(Q)}}\\
&\le \E\cro{\frac{\int_{\sM}\exp\cro{\cc\beta\gT(\bsX,P,Q)}d\pi(Q)}{\int_{\sB}\exp\cro{\cc_{1}\beta\gT(\bsX,P,Q)}d\pi(Q)}}.
\end{align*}
Hence, 
\begin{align*}
\E\cro{\exp\cro{-\beta\gT(\bsX,P)}}&\le \E\cro{\frac{1}{\int_{\sB}\exp\cro{I(\bsX,P,Q)}d\pi(Q)}}\\
&=\frac{1}{\pi(\sB)}\E\cro{\frac{1}{\int_{\sB}\exp\cro{I(\bsX,P,Q)}d\pi_{\sB}(Q)}}.
\end{align*}
Applying Lemma~\ref{lem-Catoni} with $U=\bsX$, $V=Q$ with distribution $\pi_{\sB}$, and $h(U,V)=-I(\bsX,P,Q)$, we obtain that 
\begin{align*}
\E\cro{\exp\cro{-\beta\gT(\bsX,P)}}&\le\frac{1}{\pi(\sB)}\cro{\int_{\sB}\frac{1}{\E\cro{\exp\cro{-I(\bsX,P,Q)}}}d\pi_{\sB}(Q)}^{-1}
\end{align*}
and \eref{eq-LaplaceZ} follows from~\eref{num-01}.
\end{proof}

\begin{lem}\label{Denom}
For $P,Q\in\sM$, we set 
\[
\gL(P,Q)=\log \int_{\sM}\E\cro{\exp\cro{\beta\pa{\cc_{2}\gT(\bsX,P,Q')-\cc_{1}\gT(\bsX,P,Q)}}}d\pi(Q').
\]
For all $\ray\in\cV(\pi,\PAP)$, 
\begin{align*}
&\E\cro{\frac{1}{\int_{\sM}\exp\cro{-\beta \gT(\bsX,P)}d\pi(P)}}\\
&\le \frac{1}{\pi^{2}(\sB)} \cro{\int_{\sB^{2}}\exp\cro{-\gL(P,Q)}d\pi_{\sB}(P)d\pi_{\sB}(Q)}^{-1}.
\end{align*}
\end{lem}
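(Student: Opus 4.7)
The plan is to mimic the structure of Lemma~\ref{LaplaceZ}, but to apply Lemma~\ref{lem-Catoni} twice: once to handle the outer integral over $P$, and then again inside the bound on $\E\cro{\exp\cro{\beta\gT(\bsX,P)}}$ in order to land on the sharper reciprocal form $[\int_{\sB}\exp(-\gL)d\pi_{\sB}]^{-1}$ required by the statement.

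First, since $\sM\supset\sB$ and $\pi(\sB)>0$, I would restrict the outer integral to the ball,
\[
\int_{\sM}\exp\cro{-\beta\gT(\bsX,P)}d\pi(P)\ge \pi(\sB)\int_{\sB}\exp\cro{-\beta\gT(\bsX,P)}d\pi_{\sB}(P),
\]
take reciprocals and expectations, and apply Lemma~\ref{lem-Catoni} with $U=\bsX$, $V=P$ of law $\pi_{\sB}$ and $h(U,V)=\beta\gT(\bsX,P)$. This produces
\[
\E\cro{\frac{1}{\int_{\sM}\exp\cro{-\beta\gT(\bsX,P)}d\pi(P)}}\le \frac{1}{\pi(\sB)}\cro{\int_{\sB}\frac{d\pi_{\sB}(P)}{\E\cro{\exp\cro{\beta\gT(\bsX,P)}}}}^{-1}.
\]
It only remains to upper bound $\E\cro{\exp\cro{\beta\gT(\bsX,P)}}$ uniformly for $P\in\sB$.

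For this upper bound, I would mimic the first two steps of the proof of Lemma~\ref{LaplaceZ}. Jensen's inequality applied to the convex function $\exp$ and the probability measure $\widetilde\pi_{\bsX}(\cdot|P)$, together with $\beta+\lambda=\cc_{2}\beta$, gives
\[
\exp\cro{\beta\gT(\bsX,P)}\le \int_{\sM}\exp\cro{\beta\gT(\bsX,P,Q)}d\widetilde\pi_{\bsX}(Q|P)=\frac{\int_{\sM}\exp\cro{\cc_{2}\beta\gT(\bsX,P,Q)}d\pi(Q)}{\int_{\sM}\exp\cro{\cc_{1}\beta\gT(\bsX,P,Q')}d\pi(Q')}.
\]
Restricting the denominator to $\sB$ produces a factor $1/\pi(\sB)$ and allows the right-hand side to be rewritten as $[\pi(\sB)]^{-1}\cdot [\int_{\sB}G(\bsX,P,Q')d\pi_{\sB}(Q')]^{-1}$, where
\[
G(\bsX,P,Q')=\frac{\exp\cro{\cc_{1}\beta\gT(\bsX,P,Q')}}{\int_{\sM}\exp\cro{\cc_{2}\beta\gT(\bsX,P,Q)}d\pi(Q)}.
\]
A second application of Lemma~\ref{lem-Catoni}, now with $V=Q'$ of law $\pi_{\sB}$ and $h(\bsX,Q')=-\log G(\bsX,P,Q')$, combined with Fubini and the observation that, by definition of $\gL$, $\E\cro{1/G(\bsX,P,Q')}=\exp\cro{\gL(P,Q')}$, yields
\[
\E\cro{\exp\cro{\beta\gT(\bsX,P)}}\le \frac{1}{\pi(\sB)}\cro{\int_{\sB}\exp\cro{-\gL(P,Q)}d\pi_{\sB}(Q)}^{-1}.
\]

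Plugging this inequality back into the first display gives $1/\E\cro{\exp\cro{\beta\gT(\bsX,P)}}\ge \pi(\sB)\int_{\sB}\exp\cro{-\gL(P,Q)}d\pi_{\sB}(Q)$; integrating over $P\in\sB$ with respect to $\pi_{\sB}$, using Fubini and taking reciprocals then produces exactly the double-integral factor $[\int_{\sB^{2}}\exp\cro{-\gL(P,Q)}d\pi_{\sB}(P)d\pi_{\sB}(Q)]^{-1}$, while the two factors $1/\pi(\sB)$ picked up along the way combine into the required $1/\pi^{2}(\sB)$. The delicate point, and the step I would handle with the most care, is the second use of Lemma~\ref{lem-Catoni}: a direct Jensen inequality would only yield the weaker bound $\E\cro{\exp\cro{\beta\gT(\bsX,P)}}\le [\pi(\sB)]^{-1}\int_{\sB}\exp\cro{\gL(P,Q)}d\pi_{\sB}(Q)$, which does not reciprocate correctly under the subsequent integration over $P$ and therefore fails to recover the double-integral form $[\int_{\sB^{2}}\exp(-\gL)d\pi_{\sB}(P)d\pi_{\sB}(Q)]^{-1}$ claimed in the statement.
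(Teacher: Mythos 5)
Your proof is correct and follows essentially the same route as the paper's: you apply Jensen via the convexity of $\exp$ and the identity $\beta+\lambda=\cc_{2}\beta$, then apply Catoni's Lemma~\ref{lem-Catoni} twice (once in the variable $Q$ to convert $\E\cro{\exp\cro{\beta\gT(\bsX,P)}}$ into $[\int_{\sB}\exp(-\gL)d\pi_{\sB}]^{-1}$, once in the variable $P$), recognizing along the way that $\E\cro{1/G(\bsX,P,Q')}=\exp\cro{\gL(P,Q')}$, which is exactly the paper's observation that $\E\cro{\exp\cro{-H(\bsX,P,Q)}}=\exp\cro{\gL(P,Q)}$. The only cosmetic difference is the order in which you restrict to $\sB$ versus invoke Catoni (you restrict first and apply Catoni over $\pi_{\sB}$; the paper applies Catoni over $\pi$ and then restricts), which yields the same bound.
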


\begin{proof}
For $P,Q\in\sM$, we set 
\[
H(\bsX,P,Q)=\beta \cc_{1}\gT(\bsX,P,Q)-\log\cro{\int_{\sM}\exp\cro{\cc_{2}\beta\gT(\bsX,P,Q')}d\pi(Q')}.
\]
Then, 
\begin{align}
&\E\cro{\exp\cro{-H(\bsX,P,Q)}}\nonumber\\
&=\E\cro{\exp\cro{-\beta\cc_{1}\gT(\bsX,P,Q)}\int_{\sM}\exp\cro{\cc_{2}\beta\gT(\bsX,P,Q')}d\pi(Q')}\nonumber\\
&=\E\cro{\int_{\sM}\exp\cro{\beta\pa{\cc_{2}\gT(\bsX,P,Q')-\cc_{1}\gT(\bsX,P,Q)}}d\pi(Q')}\nonumber\\
&=\exp\cro{\gL(P,Q)}.\label{Deno-eq01}
\end{align}

It follows from the convexity of the exponential and the fact that $\lambda=\cc_{1}\beta$ that for all $P\in\sM$,
\begin{align*}
\E\cro{\exp\cro{\beta \gT(\bsX,P)}}&=\E\cro{\exp\cro{\int_{\sM}[\beta\gT(\bsX,P,Q)]d\widetilde \pi_{\bsX}(Q)}}\\
&\le \E\cro{{\int_{\sM}{\exp\cro{\beta \gT(\bsX,P,Q)}d\widetilde \pi_{\bsX}(Q)}}}\\
&=\E\cro{\frac{\int_{\sM}\exp\cro{\cc_{2}\beta\gT(\bsX,P,Q)}d\pi(Q)}{\int_{\sM}\exp\cro{\cc_{1}\beta\gT(\bsX,P,Q)}d\pi(Q)}}\\
&=\E\cro{\frac{1}{\int_{\sM}\exp\cro{H(\bsX,P,Q)}d\pi(Q)}}.
\end{align*}
Applying Lemma~\ref{lem-Catoni} with $U=\bsX$ and $V=Q$ with distribution $\pi$ we obtain that
\begin{align*}
\E\cro{\exp\cro{\beta\gT(\bsX,P)}}\le \cro{\int_{\sM}\frac{1}{\E\cro{\exp\cro{-H(\bsX,P,Q)}}}d\pi(Q)}^{-1}.
\end{align*}
We deduce from~\eref{Deno-eq01} that for all $P\in\sM$
\begin{align}
\E\cro{\exp\cro{\beta\gT(\bsX,P)}}&\le \cro{\int_{\sM}\exp\cro{-\gL(P,Q)}d\pi(Q)}^{-1}\nonumber\\
&\le \frac{1}{\pi(\sB)}\cro{\int_{\sB}\exp\cro{-\gL(P,Q)}d\pi_{\sB}(Q)}^{-1}.\label{Deno-eq02}
\end{align}

Applying Lemma~\ref{lem-Catoni} with $U=\bsX$, $V=P$ with distribution $\pi$ and $h(U,V)=\beta \gT(\bsX,P)$, gives 
\begin{align*}
\E\cro{\frac{1}{\int_{\sM}\exp\cro{-\beta \gT(\bsX,P)}d\pi(P)}}&\le  \cro{\int_{\sM}\frac{1}{\E\cro{\exp\cro{\beta\gT(\bsX,P)}}}d\pi(P)}^{-1}\\
&\le \frac{1}{\pi(\sB)}\cro{\int_{\sB}\frac{1}{\E\cro{\exp\cro{\beta\gT(\bsX,P)}}}d\pi_{\sB}(P)}^{-1}
\end{align*}
which together with~\eref{Deno-eq02} leads to the result.

\end{proof}

The proofs of Theorems~\ref{main1} and~\ref{main2} rely on suitable bounds on the Laplace transforms of sums of independent random variables and on a summation lemma.  These results are presented below. 

\begin{lem}\label{lem-Hoeffding}
For all $\beta\in\R$ and random variable $U$ with values in an interval of length $l\in (0,+\infty)$, 
\begin{equation}\label{eq-Ho1}
\log\E\cro{\exp\cro{\beta U}}\le \beta\E\cro{U}+\frac{\beta^{2}l^{2}}{8}.
\end{equation}
\end{lem}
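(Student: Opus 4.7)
The plan is to prove the classical Hoeffding inequality by first reducing to the centred case and then applying convexity. Let $U$ take values in an interval $[a,b]$ with $b-a = l$, and set $\widetilde U = U - \E[U]$. Since $\log \E[\exp(\beta U)] = \beta \E[U] + \log \E[\exp(\beta \widetilde U)]$, it suffices to show $\log \E[\exp(\beta \widetilde U)] \le \beta^2 l^2 / 8$. Without loss of generality, I may therefore assume $\E[U] = 0$, in which case $a \le 0 \le b$ and still $b - a = l$.

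The next step is to exploit convexity of the exponential on $[a,b]$: for every $x \in [a,b]$,
\[
e^{\beta x} \;\le\; \frac{b-x}{b-a}\,e^{\beta a} + \frac{x-a}{b-a}\,e^{\beta b}.
\]
Taking expectations and using $\E[U]=0$ yields
\[
\E\!\left[e^{\beta U}\right] \;\le\; \frac{b}{b-a}\,e^{\beta a} - \frac{a}{b-a}\,e^{\beta b}.
\]
Setting $p = -a/(b-a) \in [0,1]$ and $u = \beta(b-a) = \beta l$, this can be rewritten as $e^{-pu}\bigl(1 - p + p\,e^{u}\bigr)$, so $\log \E[\exp(\beta U)] \le \phi(u)$ where $\phi(u) = -pu + \log(1 - p + p e^u)$.

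The remaining task is to show $\phi(u) \le u^2/8$, which is the core analytic step. I will compute $\phi(0) = 0$ and $\phi'(0) = 0$, then bound the second derivative
\[
\phi''(u) \;=\; \frac{p(1-p)\,e^u}{(1-p + p\,e^u)^2} \;=\; q(1-q) \;\le\; \tfrac{1}{4},
\]
where $q = p e^u / (1 - p + p e^u) \in [0,1]$. A second-order Taylor expansion with remainder then gives $\phi(u) \le u^2 / 8 = \beta^2 l^2 / 8$, which completes the proof. The only mildly delicate point is the uniform bound $\phi''(u) \le 1/4$; everything else is routine manipulation.
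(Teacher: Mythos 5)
Your proof is correct, and it is the standard textbook proof of Hoeffding's lemma (centering, convexity of the exponential on $[a,b]$, then a second-order Taylor bound on $\phi(u)=-pu+\log(1-p+pe^u)$ via $\phi''=q(1-q)\le 1/4$). The paper itself does not write out a proof at all — it simply cites page 21 of Massart's lecture notes — and that reference contains essentially the same argument you gave, so you have matched the paper's intended proof.
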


\begin{lem}\label{lem-Pois}
Let $U$ be a squared integrable random variable not larger than $b> 0$. For all $\beta>0$, 
\begin{equation}\label{eq-Pois01}
\log\E\cro{\exp\cro{\beta U}}\le\beta \E\cro{U}+\beta^{2}\E\cro{U^{2}}\frac{\phi(\beta b)}{2},
\end{equation}
where $\phi$ is defined by~\eref{def-phi}.
\end{lem}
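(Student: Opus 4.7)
The plan is to prove the bound via the standard Bennett-type trick: compare $e^{\beta U}$ pointwise to a quadratic polynomial in $U$, take expectations, and then apply $\log(1+x)\le x$.

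First I would establish the pointwise inequality $e^{y}\le 1+y+\tfrac{1}{2}\phi(c)\,y^{2}$ whenever $y\le c$, where $c>0$. The natural route is to introduce $g(y)=(e^{y}-1-y)/y^{2}$ (extended by $g(0)=1/2$) and show that $g$ is nondecreasing on $\R$. One way to see this is to write the power series $g(y)=\sum_{k\ge 0}y^{k}/(k+2)!$, which is manifestly increasing in $y$ on $\R_{+}$; for $y<0$ one can check directly (for instance via the identity $g(y)=\int_{0}^{1}(1-s)e^{sy}\,ds$, whose integrand is increasing in $y$ at each $s\in[0,1]$). Thus for $y\le c$ one has $g(y)\le g(c)=\phi(c)/2$, which rearranges to the announced pointwise bound.

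Next I would apply this with $y=\beta U$ and $c=\beta b$, which is legitimate because $\beta>0$ and $U\le b$. This yields
\[
e^{\beta U}\le 1+\beta U+\frac{\phi(\beta b)}{2}\,\beta^{2}U^{2}.
\]
Taking expectations (which are finite since $U$ is square integrable and bounded above, hence $e^{\beta U}$ is bounded, integrable) gives
\[
\E\cro{e^{\beta U}}\le 1+\beta\E\cro{U}+\frac{\phi(\beta b)}{2}\,\beta^{2}\E\cro{U^{2}}.
\]

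Finally I would use $\log(1+x)\le x$ for $x>-1$ (applied to the right-hand side, which is positive) to conclude
\[
\log\E\cro{e^{\beta U}}\le \beta\E\cro{U}+\frac{\phi(\beta b)}{2}\,\beta^{2}\E\cro{U^{2}},
\]
which is \eref{eq-Pois01}. The only mildly delicate point is the monotonicity of $g$; everything else is one-line bookkeeping. The power-series representation makes the monotonicity essentially trivial on $[0,\infty)$ and the integral representation handles negative arguments uniformly, so I do not anticipate any real obstacle.
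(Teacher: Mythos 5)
Your proof is correct, and it is essentially the argument the paper appeals to: the paper does not spell out a proof but cites Massart (2007, p.~23), where the same Bennett-type pointwise bound $e^{y}\le 1+y+\tfrac{1}{2}\phi(c)y^{2}$ for $y\le c$ (via monotonicity of $y\mapsto (e^{y}-1-y)/y^{2}$) followed by $\log(1+x)\le x$ is used. Your integral representation $g(y)=\int_{0}^{1}(1-s)e^{sy}\,ds$ cleanly disposes of the monotonicity claim on all of $\R$, so there is no gap.
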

The proofs of Lemmas~\ref{lem-Hoeffding} and~\ref{lem-Pois} can be found on pages 21 and 23 in Massart~\citeyearpar{MR2319879} (where our function $\phi$ is defined as twice his).
\begin{lem}\label{lem-som}
Let $J\in\N$, $\gamma>0$ and $\PAP\in \sM$. If $\ray$ satisfies $n\beta a_{1}\ray\ge 1$ and~\eref{prop-epsn}, for all $\gamma_{0}>2\gamma$ 
\begin{align}
&\int_{\co{\sB}(\PAP,2^{J}\ray)}\exp\cro{-\gamma_{0}n\beta a_{1}\ell(\PAP,P)}d\pi(P)\nonumber\\
&\le  \pi\pa{\sB}\exp\cro{\Xi-\pa{\gamma_{0}-2\gamma}n \beta a_{1}2^{J}\ray}\label{eq-lem-som01}
\end{align}
with 
\begin{align*}
\Xi&=  -\gamma +\log\cro{\frac{1}{1-\exp\cro{-\pa{\gamma_{0}-2\gamma}}}}
\end{align*}
Besides, 
\begin{align}
\int_{\sM}\exp\cro{-\gamma_{0}n\beta a_{1}\ell(\PAP,P)}d\pi(P)\le\pi(\sB)\exp\cro{\Xi'}\label{eq-lem-som02}
\end{align}
with 
\[
\Xi'=\log\cro{1+\frac{\exp\cro{-\pa{\gamma_{0}-\gamma}}}{1-\exp\cro{-\pa{\gamma_{0}-2\gamma}  }}}.
\]
\end{lem}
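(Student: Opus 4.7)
}

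The strategy is a dyadic peeling of the complement of $\sB(\PAP,2^{J}\ray)$ combined with iterated use of the doubling property~\eref{prop-epsn}. First I would iterate~\eref{prop-epsn}: since $\ray$ satisfies the doubling inequality for every $\ray'\ge \ray$, a straightforward induction on $m\ge 1$ yields
\[
\pi\bigl(\sB(\PAP,2^{m}\ray)\bigr)\le \exp\!\left(\gamma n\beta a_{1}\ray\sum_{j=0}^{m-1}2^{j}\right)\pi(\sB)=\exp\bigl(\gamma n\beta a_{1}\ray\,(2^{m}-1)\bigr)\pi(\sB).
\]

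Next I would decompose the complement of $\sB(\PAP,2^{J}\ray)$ into the dyadic annuli $\sA_{k}=\sB(\PAP,2^{J+k+1}\ray)\setminus\sB(\PAP,2^{J+k}\ray)$ for $k\ge 0$. On $\sA_{k}$ the integrand is bounded by $\exp(-\gamma_{0}n\beta a_{1}2^{J+k}\ray)$, while $\pi(\sA_{k})\le \pi\bigl(\sB(\PAP,2^{J+k+1}\ray)\bigr)$ is controlled by the preceding display with $m=J+k+1$. Setting $x=(\gamma_{0}-2\gamma)n\beta a_{1}\ray>0$, this gives
\[
\int_{\co{\sB}(\PAP,2^{J}\ray)}\!\!\exp\bigl[-\gamma_{0}n\beta a_{1}\ell(\PAP,P)\bigr]d\pi(P)\le \pi(\sB)\exp(-\gamma n\beta a_{1}\ray)\sum_{k=0}^{\infty}\exp(-x\cdot 2^{J+k}).
\]

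The main technical point is to upper-bound the super-geometric sum $\sum_{k\ge 0}\exp(-x\cdot 2^{J+k})$. Since $2^{J+k}\ge 2^{J}$, the ratio of consecutive terms is at most $\exp(-x\cdot 2^{J})$, so the sum is bounded by the geometric series $\exp(-x\cdot 2^{J})/[1-\exp(-x\cdot 2^{J})]$. Using the hypothesis $n\beta a_{1}\ray\ge 1$, together with $2^{J}\ge 1$, one has $x\cdot 2^{J}\ge \gamma_{0}-2\gamma$, so the denominator is at least $1-\exp(-(\gamma_{0}-2\gamma))$. Combining this with the factor $\exp(-\gamma n\beta a_{1}\ray)\le \exp(-\gamma)$ (again by $n\beta a_{1}\ray\ge 1$) yields exactly~\eref{eq-lem-som01}, with $\Xi$ as stated.

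For~\eref{eq-lem-som02} I would split $\sM=\sB\cup \co{\sB}(\PAP,\ray)$. On $\sB$ the integrand is $\le 1$, contributing at most $\pi(\sB)$. To the complement I would apply~\eref{eq-lem-som01} with $J=0$, and then use once more $n\beta a_{1}\ray\ge 1$ to replace $\exp(-(\gamma_{0}-2\gamma)n\beta a_{1}\ray)$ by $\exp(-(\gamma_{0}-2\gamma))$. Adding the two contributions and factoring $\pi(\sB)$ gives the prefactor $1+\exp(-(\gamma_{0}-\gamma))/[1-\exp(-(\gamma_{0}-2\gamma))]=\exp(\Xi')$, which is the claim. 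The only slightly delicate step is the super-geometric sum in paragraph two; everything else is bookkeeping.
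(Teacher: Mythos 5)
Your proposal is correct and follows essentially the same route as the paper: iterate the doubling inequality~\eref{prop-epsn}, peel $\co{\sB}(\PAP,2^{J}\ray)$ into dyadic annuli, bound the integrand and the prior mass on each annulus, and use $n\beta a_{1}\ray\ge 1$ to normalize the constants. The only (cosmetic) difference is in controlling the super-geometric sum $\sum_{k\ge 0}\exp(-x\cdot 2^{J+k})$: the paper bounds it via the pointwise inequality $2^{k}\ge k+1$, while you bound the ratio of consecutive terms by $\exp(-x2^{J})$; both yield exactly the same geometric-series bound $\exp(-x2^{J})/[1-\exp(-x2^{J})]$, so the final constants $\Xi$ and $\Xi'$ agree.
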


\begin{proof}
From~\eref{prop-epsn}, we deduce by induction that for all $j\ge 0$
\begin{align*}
\pi\pa{\sB(\PAP,2^{j+1}\ray)}&\le \exp\cro{\gamma  n \beta a_{1}\ray\sum_{k=0}^{j}2^{k}}\pi\pa{\sB}\\
&= \exp\cro{(2^{j+1}-1)\gamma n \beta a_{1}\ray}\pi\pa{\sB}
\end{align*}
Consequently, 
\begin{align*}
&\int_{\co{\sB}(\PAP,2^{J}\ray)}\exp\cro{-\gamma_{0}n\beta a_{1}\ell(\PAP,P)}d\pi(P)\\
&=\sum_{j\ge J}\int_{\sB(\PAP,2^{j+1}\ray)\setminus \sB(\PAP,2^{j}\ray)}\exp\cro{-\gamma_{0}\beta n a_{1}\ell(\PAP,P)}d\pi(P)\\
&\le \pi\pa{\sB}\sum_{j\ge J}\frac{\pi\pa{\sB(\PAP,2^{j+1}\ray)}}{\pi\pa{\sB}}\exp\cro{-\gamma_{0}n\beta a_{1} 2^{j}\ray}\\
&\le  \pi\pa{\sB}\sum_{j\ge J}\exp\cro{\gamma n\beta a_{1}(2^{j+1}-1)\ray-\gamma_{0}n\beta a_{1} 2^{j}\ray}\\
&=  \pi\pa{\sB}\exp\cro{-\gamma n \beta a_{1}\ray} \sum_{j\ge J}\exp\cro{-\pa{\gamma_{0}-2\gamma}n\beta a_{1} 2^{j}\ray}\\
&=  \pi\pa{\sB}\exp\cro{-\gamma n \beta a_{1}\ray} \sum_{j\ge 0}\exp\cro{-\pa{\gamma_{0}-2\gamma}n\beta a_{1} 2^{j}2^{J}\ray}.
\end{align*}
Since $2^{j}\ge j+1$ for all $j\ge 0$ we obtain that 
\begin{align*}
&\int_{\co{\sB}(\PAP, 2^{J}r)}\exp\cro{-\gamma_{0}n\beta a_{1}\ell(\PAP,P)}d\pi(P)\\
&\le  \pi\pa{\sB}\exp\cro{-\gamma n \beta a_{1}\ray} \sum_{j\ge 0}\exp\cro{-\pa{\gamma_{0}-2\gamma}n\beta a_{1} (j+1)2^{J}\ray}\\
&\le  \pi\pa{\sB}\exp\cro{-\gamma n \beta a_{1}\ray-\pa{\gamma_{0}-2\gamma}n\beta a_{1}2^{J}\ray}\sum_{j\ge 0}\exp\cro{-j\pa{\gamma_{0}-2\gamma}n\beta  a_{1}2^{J}\ray}\\
&= \pi\pa{\sB}\frac{\exp\cro{-\gamma n \beta a_{1}\ray}}{1-\exp\cro{-\pa{\gamma_{0}-2\gamma}n\beta  a_{1}2^{J}\ray}}\exp\cro{-\pa{\gamma_{0}-2\gamma}n \beta a_{1}2^{J}\ray}.
\end{align*}
which leads to~\eref{eq-lem-som01} since $n\beta  a_{1}2^{J}\ray\ge n\beta a_{1}\ray\ge 1$. Finally, by applying this inequality with $J=0$ we obtain that  
\begin{align*}
&\int_{\sM}\exp\cro{-\beta n a_{1}\gamma_{0}\ell(\PAP,P)}d\pi(P)\\
&=\int_{\sB}\exp\cro{-\beta n a_{1}\gamma_{0}\ell(\PAP,P)}d\pi(P)+\int_{\co{\sB}}\exp\cro{-\beta n a_{1}\gamma_{0}\ell(\PAP,P)}d\pi(P)\\
&\le \pi(\sB)\cro{1+\frac{\exp\cro{-\gamma -\pa{\gamma_{0}-2\gamma}n \beta a_{1}\ray}}{1-\exp\cro{-\pa{\gamma_{0}-2\gamma}}}}\\
&\le \pi(\sB)\cro{1+\frac{\exp\cro{-\pa{\gamma_{0}-\gamma}}}{1-\exp\cro{-\pa{\gamma_{0}-2\gamma}}}},
\end{align*}
which is~\eref{eq-lem-som02}.
\end{proof}

\subsection{Proof of Theorem~\ref{main1}}
For all $i\in\{1,\ldots,n\}$ and $P,Q,Q'\in\sM$, let us set 
\begin{align}
U_{i}&=\cc\pa{t_{(P,Q')}(X_{i})-\E\cro{t_{(P,Q')}(X_{i})}}\label{def-U}\\
&\quad  -\cc_{1}\pa{t_{(P,Q)}(X_{i})-\E\cro{t_{(P,Q)}(X_{i})}}\nonumber\\
V_{i}&=\cc_{2}\pa{t_{(P,Q')}(X_{i})-\E\cro{t_{(P,Q')}(X_{i})}}\label{def-V}\\
&\quad  -\cc_{1}\pa{t_{(P,Q)}(X_{i})-\E\cro{t_{(P,Q)}(X_{i})}}.\nonumber
\end{align}
The random variables $U_{i}$ are independent and under Assumption~\ref{Hypo-1}-\ref{cond-4}, they takes their values in an interval of length $l_{1}=\cc+\cc_{1}=1+2\cc$. The $V_{i}$ are also independent and they takes their values in an interval of length $l_{2}=\cc_{1}+\cc_{2}=3+2\cc$. Applying Lemma~\ref{lem-Hoeffding}, we obtain that 
\begin{align}
\prod_{i=1}^{n}\E\cro{\exp\cro{\beta U_{i}}}&\le \exp\cro{\frac{l_{1}^{2} n\beta^{2}}{8}}\label{eq-Th1-01b}
\end{align}
and
\begin{align}
\prod_{i=1}^{n}\E\cro{\exp\cro{\beta V_{i}}}&\le \exp\cro{\frac{l_{2}^{2} n\beta^{2}}{8}}.\label{eq-Th1-01}
\end{align}

By using Assumption~\ref{Hypo-0bis} and the fact that $\cc_{0}=\cc_{1}-\cc a_{0}/a_{1}>0$, 
\begin{align}
&\cc \pa{a_{0}\ell(\overline P\et,P)- a_{1}\ell(\overline P\et,Q')}-\cc_{1}\pa{a_{1}\ell(\overline P\et,P)-a_{0}\ell(\overline P\et,Q)}\nonumber\\
&= -\pa{\cc_{1}a_{1}-\cc a_{0}}\ell(\overline P\et,P)-\cc a_{1}\ell(\overline P\et,Q')+\cc_{1}a_{0}\ell(\overline P\et,Q)\nonumber\\
&\le -\cc_{0}a_{1}\cro{\tau^{-1}\ell(\PAP,P)-\ell(\overline P\et,\PAP)}-\cc a_{1}\cro{\tau^{-1}\ell(\PAP,Q')-\ell(\overline P\et,\PAP)}\nonumber\\
&\quad +\tau \cc_{1}a_{0}\cro{\ell(\overline P\et,\PAP)+\ell(\PAP,Q)}\nonumber\\
&=e_{0}a_{1}\ell(\overline P\et,\PAP)-\tau^{-1}\cc_{0}a_{1}\ell(\PAP,P)-\tau^{-1}\cc a_{1}\ell(\PAP,Q')+\tau \cc_{1}a_{0}\ell(\PAP,Q)\label{eq-Th1-03b}
\end{align}
with 
\begin{equation}\label{def-c0}
e_{0}=\cc_{0}+\cc+\frac{\tau \cc_{1} a_{0}}{a_{1}}.
\end{equation}

It follows from \eref{eq-Th1-03b} and Assumptions~\ref{Hypo-1}-\ref{cond-2}, more precisely its consequences~\eref{eq-SPQbis} and~\eref{eq-SPQter}, that
\begin{align}
&n^{-1}\ac{\cc\E\cro{\gT(\bsX,P,Q')}-\cc_{1}\E\cro{\gT(\bsX,P,Q)}}\nonumber\\
&\le \cc \cro{a_{0}\ell(\overline P\et,P)- a_{1}\ell(\overline P\et,Q')}-\cc_{1}\cro{a_{1}\ell(\overline P\et,P)-a_{0}\ell(\overline P\et,Q)}\nonumber\\
&\le e_{0}a_{1}\ell(\overline P\et,\PAP)-\tau^{-1}\cc_{0}a_{1}\ell(\PAP,P)-\tau^{-1}\cc a_{1}\ell(\PAP,Q')+\tau \cc_{1}a_{0}\ell(\PAP,Q).\label{eq-Th1-02b}
\end{align}

Since $a_{0}\ge a_{1}$ and $\cc_{2}>\cc_{1}$,  $\cc_{0}'=\cc_{2}(a_{0}/a_{1})-\cc_{1}>0$ and   by arguing as above, we obtain similarly that
\begin{align}
&n^{-1}\ac{\cc_{2}\E\cro{\gT(\bsX,P,Q')}-\cc_{1}\E\cro{\gT(\bsX,P,Q)}}\nonumber\\
&\le  \cc_{2} \pa{a_{0}\ell(\overline P\et,P)- a_{1}\ell(\overline P\et,Q')}-\cc_{1}\pa{a_{1}\ell(\overline P\et,P)-a_{0}\ell(\overline P\et,Q)}\nonumber\\
&=\cc_{0}'a_{1}\ell(\overline P\et,P)-\cc_{2}a_{1}\ell(\overline P\et,Q')+\cc_{1}a_{0}\ell(\overline P\et,Q)\nonumber\\
&\le\tau\cc_{0}'a_{1}\cro{\ell(\overline P\et,\PAP)+\ell(\PAP,P)}-\cc_{2}a_{1}\cro{\tau^{-1}\ell(\PAP,Q')-\ell(\overline P\et,\PAP)}\nonumber\\
&\quad +\tau\cc_{1}a_{0}\cro{\ell(\overline P\et,\PAP)+\ell(\PAP,Q)}\nonumber\\
&\le \pa{e_{1}+\cc_{2}}a_{1}\ell(\overline P\et,\PAP)+\tau\cc_{0}'a_{1}\ell(\PAP,P)\nonumber\\
&\quad -\tau^{-1}\cc_{2} a_{1}\ell(\PAP,Q')+\tau \cc_{1}a_{0}\ell(\PAP,Q),\label{eq-Th1-02}
\end{align}
with 
\begin{equation}\label{def-c1}
e_{1}=\tau\cro{\cc_{0}'+\cc_{1} a_{0}/a_{1}}=\tau\cro{\cc_{2}(a_{0}/a_{1})+\ \cc_{1} \pa{a_{0}/a_{1}-1}}.
\end{equation}

Using~\eref{eq-Th1-01b} and~\eref{eq-Th1-02b}, we deduce that for all $P,Q,Q'\in\sM$
\begin{align}
&\E\cro{\exp\cro{\beta\pa{\cc\gT(\bsX,P,Q')-\cc_{1}\gT(\bsX,P,Q)}}}\nonumber \\
&=\prod_{i=1}^{n}\E\cro{\exp\cro{\beta\pa{\cc t_{(P,Q')}(X_{i})-\cc_{1}t_{(P,Q)}(X_{i})}}}\nonumber\\
&=\exp\cro{\beta\pa{\cc\E\cro{\gT(\bsX,P,Q')}-\cc_{1}\E\cro{\gT(\bsX,P,Q)}}}\prod_{i=1}^{n}\E\cro{\exp\cro{\beta U_{i}}}\nonumber\\
&\le \exp\cro{n\beta\cro{\Delta_{1}(P,Q)-\tau^{-1}\cc a_{1}\ell(\PAP,Q')}}\label{eq-th01-10b}
\end{align}
with
\begin{align}
&\Delta_{1}(P,Q)=e_{0}a_{1}\ell(\overline P\et,\PAP)+\tau\cc_{1}a_{0}\ell(\PAP,Q)+\frac{l_{1}^{2} \beta}{8}-\tau^{-1}\cc_{0}a_{1}\ell(\PAP,P).\label{def-Delta1}
\end{align}

Using~\eref{eq-Th1-01} and~\eref{eq-Th1-02}, we obtain similarly that for all $P,Q,Q'\in\sM$ 
\begin{align}
&\E\cro{\exp\cro{\beta\pa{\cc_{2}\gT(\bsX,P,Q')-\cc_{1}\gT(\bsX,P,Q)}}}\nonumber \\
&\le \exp\cro{n\beta\cro{\Delta_{2}(P,Q)-\tau^{-1}\cc_{2}a_{1}\ell(\PAP,Q')}}\label{eq-th01-10}
\end{align}
with 
\begin{align}
\Delta_{2}(P,Q)&=\pa{e_{1}+\cc_{2}}a_{1}\ell(\overline P\et,\PAP)+\tau\cc_{0}'a_{1}\ell(\PAP,P)+\tau\cc_{1}a_{0}\ell(\PAP,Q)\nonumber\\
&\quad +\frac{l_{2}^{2}\beta}{8}.\label{def-Delta2}
\end{align}

Since $2\gamma<\tau^{-1}\cc<\tau^{-1}\cc_{2}$, we may apply Lemma~\ref{lem-som} with $\gamma_{0}=\tau^{-1}\cc $ and $\gamma_{0}=\tau^{-1}\cc_{2}$ successively which leads to
\begin{align}
\int_{\sM}\exp\cro{-\tau^{-1}\cc n \beta a_{1}\ell(\PAP,Q')}d\pi(Q')\le  \pi\pa{\sB}\exp\cro{\Xi_{1}}\label{eq-th01-11b}
\end{align}
and
\begin{align}
\int_{\sM}\exp\cro{-\tau^{-1}\cc_{2}n \beta a_{1}\ell(\PAP,Q')}d\pi(Q')\le  \pi\pa{\sB}\exp\cro{\Xi_{1}}\label{eq-th01-11}
\end{align}
with 
\begin{align}
\Xi_{1}&=\log\cro{1+\frac{\exp\cro{-\pa{\tau^{-1}\cc-\gamma}}}{1-\exp\cro{-\pa{\tau^{-1}\cc-2\gamma}}}}\label{def-xi1}\\
&\ge \log\cro{1+\frac{\exp\cro{-\pa{\tau^{-1}\cc_{2}-\gamma}}}{1-\exp\cro{-\pa{\tau^{-1}\cc_{2}-2\gamma}}}}.\nonumber
\end{align}

Putting~\eref{eq-th01-10} and~\eref{eq-th01-11} together leads to
\begin{align*}
\exp\cro{\gL(P,Q)}&=\int_{\sM}\E\cro{\exp\cro{\beta\pa{\cc_{2}\gT(\bsX,P,Q')-\cc_{1}\gT(\bsX,P,Q)}}}d\pi(Q')\\
&\le  \exp\cro{n\beta\Delta_{2}(P,Q)}\int_{\sM}\exp\cro{-\tau^{-1}\cc_{2}n \beta a_{1}\ell(\PAP,Q')}d\pi(Q')\\
&\le\pi\pa{\sB}\exp\cro{\Xi_{1}+n\beta \Delta_{2}(P,Q)},
\end{align*}
and since, for all $(P,Q)\in\sB^{2}$, by definition~\eref{def-Delta2} of $\Delta_{2}(P,Q)$,  
\begin{align}
\Delta_{2}(P,Q)&\le \pa{e_{1}+\cc_{2}}a_{1}\ell(\overline P\et,\PAP)+\cro{\tau\cc_{0}'a_{1}+\tau\cc_{1}a_{0}}\ray+\frac{l_{2}^{2}\beta}{8}\nonumber\\
&=\pa{e_{1}+\cc_{2}}a_{1}\ell(\overline P\et,\PAP)+e_{1}a_{1}\ray+\frac{l_{2}^{2}\beta}{8}=\Delta_{2}\label{def-Delta2b}
\end{align}
 we derive that
\begin{align*}
&\cro{\int_{\sB^{2}}\exp\cro{-\gL(P,Q)}d\pi_{\sB}(P)d\pi_{\sB}(Q)}^{-1}\le \pi\pa{\sB}\exp\cro{\Xi_{1}+n\beta \Delta_{2}}.
\end{align*}

We deduce from~\eref{eq-borne01} that 
\begin{align*}
\P(\co{A})&\le \frac{z}{\pi\pa{\sB}}\exp\cro{\Xi_{1}+n\beta \Delta_{2}}.
\end{align*}
In particular, $\P(\co{A})\le e^{-\xi}$ for $z$ satisfying 
\begin{align}
\log\pa{\frac{1}{z}}=\xi+\log\frac{1}{\pi(\sB)}+\Xi_{1}+n\beta \Delta_{2}.\label{def-z}
\end{align}
Putting~\eref{eq-th01-10b} and~\eref{eq-th01-11b} together, we obtain that  
\begin{align*}
&\exp\cro{\gM(P,Q)}\\
&=\int_{\sM}\E\cro{\exp\cro{\beta\pa{\cc\gT(\bsX,P,Q')-\cc_{1}\gT(\bsX,P,Q)}}}d\pi(Q')\\
&\le  \exp\cro{n\beta\Delta_{1}(P,Q)}\int_{\sM}\exp\cro{-\tau^{-1}\cc n \beta a_{1}\ell(\PAP,Q')}d\pi(Q')\\
&\le\pi\pa{\sB}\exp\cro{\Xi_{1}+n\beta \Delta_{1}(P,Q)}.
\end{align*}
It follows from the definition~\eref{def-Delta1} of $\Delta_{1}(P,Q)$ that for all $P\in\sM$ and for all $Q\in\sB$, 
\begin{align*}
\Delta_{1}(P,Q)&\le  e_{0}a_{1}\ell(\overline P\et,\PAP)+\tau\cc_{1}a_{0}\ray+\frac{l_{1}^{2} \beta}{8}-\tau^{-1}\cc_{0}a_{1}\ell(\PAP,P),
\end{align*}
and consequently, for all $P\in\sM$ and $Q\in\sB$
\begin{align*}
&\exp\cro{\gM(P,Q)}\\
&\le\pi\pa{\sB}\exp\cro{\Xi_{1}+n\beta \pa{e_{0}a_{1}\ell(\overline P\et,\PAP)+\tau\cc_{1}a_{0}\ray+\frac{l_{1}^{2} \beta}{8}-\tau^{-1}\cc_{0}a_{1}\ell(\PAP,P)}}.
\end{align*}
We derive from Lemma~\ref{LaplaceZ} that 
\begin{align*}
&\E\cro{\exp\cro{-\beta\gT(\bsX,P)}}\nonumber\\
&\le \frac{1}{\pi(\sB)}\cro{\int_{\sB}\exp\cro{-\gM(P,Q)}d\pi_{\sB}(Q)}^{-1}\\
&\le \exp\cro{\Xi_{1}+n\beta \pa{e_{0}a_{1}\ell(\overline P\et,\PAP)+\tau\cc_{1}a_{0}\ray+\frac{l_{1}^{2} \beta}{8}-\tau^{-1}\cc_{0}a_{1}\ell(\PAP,P)}},
\end{align*}
hence,
\begin{align}
\int_{\co{\sB}(\PAP,2^{J}\ray)}&\E\cro{\exp\cro{-\beta \gT(\bsX,P)}}d\pi(P)\label{eq-th01-12b}\\
&\le \exp\cro{\Xi_{1}+n\beta \pa{ e_{0}a_{1}\ell(\overline P\et,\PAP)+\tau\cc_{1}a_{0}\ray+\frac{l_{1}^{2} \beta}{8}}}\nonumber\\
&\quad \times \int_{\co{\sB}(\PAP,2^{J}\ray)}\exp\cro{-\tau^{-1}\cc_{0} n\beta a_{1} \ell(\PAP,P)}d\pi(P).\nonumber
\end{align}
Applying Lemma~\ref{lem-som} with $\gamma_{0}=\tau^{-1}\cc_{0}>2\gamma$ and setting $e_{2}=\tau^{-1}\cc_{0}-2\gamma$, we get 
\begin{align*}
\int_{\co{\sB}(\PAP,2^{J}\ray)}\exp\cro{-\tau^{-1}\cc_{0} n\beta a_{1} \ell(\PAP,P)}d\pi(P)\le \pi\pa{\sB}\exp\cro{\Xi_{2}-e_{2}n\beta a_{1}2^{J}\ray}
\end{align*}
with 
\begin{equation}\label{def-xi2}
\Xi_{2}=-\gamma+\log\cro{\frac{1}{1-\exp\cro{-e_{2}}}},
\end{equation}
which together with~\eref{eq-th01-12b} leads to 
\begin{align}
&\log\int_{\co{\sB}(\PAP,2^{J}\ray)}\E\cro{\exp\cro{-\beta \gT(\bsX,P)}}d\pi(P)\nonumber\\
&\le \log\cro{ \pi\pa{\sB}}+\Xi_{1}+\Xi_{2}\nonumber \\
&\quad +n\beta \cro{e_{0}a_{1}\ell(\overline P\et,\PAP)+\tau\cc_{1}a_{0}\ray+\frac{l_{1}^{2} \beta}{8}-e_{2}a_{1}2^{J}\ray}.\label{eq-th01-13b}
\end{align}
Using the definitions~\eref{def-z} of $z$ and~\eref{def-Delta2b} of $\Delta_{2}$ we deduce from~\eref{eq-th01-13b} that
\begin{align}
&\log\cro{\frac{1}{z}\int_{\co{\sB}(\PAP,2^{J}\ray)}\E\cro{\exp\cro{-\beta \gT(\bsX,P)}}d\pi(P)}\nonumber\\
&\le \log\pa{\frac{1}{z}}+\log\cro{ \pi\pa{\sB}}+\Xi_{1}+\Xi_{2}\nonumber\\
&\quad +n\beta \cro{e_{0}a_{1}\ell(\overline P\et,\PAP)+\tau\cc_{1}a_{0}\ray+\frac{l_{1}^{2} \beta}{8}-e_{2}a_{1}2^{J}\ray}\nonumber\\
&=\xi+\log\frac{1}{\pi(\sB)}+\Xi_{1}+n\beta \Delta_{2}+\log\cro{ \pi\pa{\sB}}+\Xi_{1}+\Xi_{2}\nonumber\\
&\quad+n\beta \cro{e_{0}a_{1}\ell(\overline P\et,\PAP)+\tau\cc_{1}a_{0}\ray+\frac{l_{1}^{2} \beta}{8}-e_{2}a_{1}2^{J}\ray}\nonumber\\
&=n\beta\cro{\pa{e_{1}+\cc_{2}+e_{0}}a_{1}\ell(\overline P\et,\PAP)+e_{1}a_{1}\ray+\frac{l_{2}^{2}\beta}{8}++\tau\cc_{1}a_{0}\ray+\frac{l_{1}^{2} \beta}{8}}\nonumber\\
&\quad +\xi+2\Xi_{1}+\Xi_{2}-e_{2}n \beta a_{1}2^{J}\ray\nonumber\\
&=n\beta\cro{\pa{e_{0}+e_{1}+\cc_{2}}a_{1}\ell(\overline P\et,\PAP)+\cro{e_{1}+\frac{\tau\cc_{1}a_{0}}{a_{1}}}a_{1}\ray+\frac{(l_{1}^{2}+l_{2}^{2}) \beta}{8}}\nonumber\\
&\quad+\xi+2\Xi_{1}+\Xi_{2}-e_{2}n \beta a_{1}2^{J}\ray.\label{eq-thm01-fin}
\end{align}
Setting, 
\[
C_{1}=e_{0}+e_{1}+\cc_{2}\quad \text{and}\quad C_{2}=e_{1}+\frac{\tau\cc_{1}a_{0}}{a_{1}},
\]
we see that the right-hand side of~\eref{eq-thm01-fin} is not larger than $-\xi$, provided that 
\begin{align*}
e_{2}n\beta a_{1}2^{J}\ray&\ge 2\xi+2\Xi_{1}+\Xi_{2}+n\beta\cro{C_{1}a_{1}\ell(\overline P\et,\PAP)+C_{2}a_{1}\ray+\frac{(l_{1}^{2}+l_{2}^{2}) \beta}{8}}
\end{align*}
or equivalently if 
\begin{align}
2^{J}&\ge \frac{1}{e_{2}}\cro{\frac{2\xi+2\Xi_{1}+\Xi_{2}}{\beta na_{1}\ray}+\frac{C_{1}\ell(\overline P\et,\PAP)}{\ray}+C_{2}+\frac{\cro{l_{1}^{2}+l_{2}^{2}}\beta}{8a_{1}\ray}}.\label{cond-J}
\end{align}
Choosing $\PAP$ in $\sM(\beta)$ and using the inequalities $a_{1}^{-1}\beta\ge \ray_{n}(\beta,\PAP)\ge 1/(\beta na_{1})$, for 
\[
\ray=\ell(\overline P\et,\PAP)+\frac{1}{a_{1}}\pa{\beta+\frac{2\xi}{n\beta}}\ge \frac{1}{\beta n a_{1}}
\]
we obtain that the right-hand side of~\eref{cond-J} satisfies 
\begin{align*}
&\frac{1}{e_{2}}\cro{\frac{2\xi+2\Xi_{1}+\Xi_{2}}{\beta na_{1}\ray}+\frac{C_{1}\ell(\overline P\et,\PAP)+C_{2}\ray}{\ray}+\frac{\cro{l_{1}^{2}+l_{2}^{2}}\beta}{8a_{1}\ray}}\\
&\le \frac{1}{e_{2}}\cro{C_{2}+2\Xi_{1}+\Xi_{2}+\frac{C_{3}}{\ray}\pa{\ell(\overline P\et,\PAP)+\frac{1}{a_{1}}\pa{\beta+\frac{2\xi}{n\beta}}}}\\
&= \frac{1}{e_{2}}\cro{C_{2}+2\Xi_{1}+\Xi_{2}+C_{3}}
\end{align*}
with $C_{3}=\max\{1,C_{1},\cro{l_{1}^{2}+l_{2}^{2}}/8\}$. Inequality~\eref{cond-J} is therefore satisfied 
for $J\in\N$ such that 
\[
2^{J}\ge \frac{C_{2}+2\Xi_{1}+\Xi_{2}+C_{3}}{e_{2}}\vee 1> 2^{J-1},
\]
and we may take 
\begin{align}\label{def-kappa0}
\kappa_{0}=\tau\cro{ \frac{2\pa{C_{2}+2\Xi_{1}+\Xi_{2}+C_{3}}}{e_{2}}\vee 1+1}\ge \tau\pa{2^{J}+1}.
\end{align}
We recall below, the list of constants depending on $a_{0},a_{1},c,\tau$ and $\gamma$ and  we have used along the proof. 
\begin{align*}
c_{0}&=1+c-\frac{ca_{0}}{a_{1}}, &c_{1}&=1+c, &c_{2}&=2+c,\\
c'_{0}&=\frac{c_{2}a_{0}}{a_{1}}-c_{1},&l_{1}&=1+2c, & l_{2}&=3+2c,\\
e_{0}&=c_{0}+c+\frac{\tau c_{1}a_{0}}{a_{1}},& 
e_{1}&=\tau\cro{c_{0}'+c_{1}\frac{a_{0}}{a_{1}}}, &e_{2}&=\tau^{-1}c_{0}-2\gamma,\\
C_{1}&=e_{0}+e_{1}+c_{2},& C_{2}&=e_{1}+\frac{\tau c_{1}a_{0}}{a_{1}},& 
C_{3}&=\max\ac{1,C_{1},\frac{l_{1}^{2}+l_{2}^{2}}{8}},
\end{align*}
and 
\begin{align*}
\Xi_{1}&=\log\cro{1+\frac{\exp\cro{-\pa{\tau^{-1}\cc-\gamma}}}{1-\exp\cro{-\pa{\tau^{-1}\cc-2\gamma}}}},\quad \Xi_{2}=-\gamma+\log\cro{\frac{1}{1-\exp\cro{-e_{2}}}}.
\end{align*}
%


\subsection{Proof of Theorem~\ref{main2}}

The proof follows the same lines as that of Theorem~\ref{main1}. Under Assumption~\ref{Hypo-1}-\ref{cond-4}, the random variables $U_{i}$ and $V_{i}$ defined by~\eref{def-U} and~\eref{def-V} are not larger than with $b=\cc+\cc_{1}=l_{1}$ and $b=\cc_{2}+\cc_{1}=l_{2}$ respectively. Since under Assumption~\ref{Hypo-2}, more precisely its consequence~\eref{cond-3bis}, that
\begin{align*}
\frac{1}{n}\sum_{i=1}^{n}\E\cro{U_{i}^{2}}&\le 2\cro{\frac{\cc^{2}}{n}\sum_{i=1}^{n}\Var\cro{t_{(P,Q')}(X_{i})}+\frac{\cc_{1}^{2}}{n}\sum_{i=1}^{n}\Var\cro{t_{(P,Q)}(X_{i})}}\\
&\le 2a_{2}\cro{(\cc^{2}+\cc_{1}^{2})\ell(\overline P\et,P)+\cc^{2}\ell(\overline P\et,Q')+\cc_{1}^{2}\ell(\overline P\et,Q)}
\end{align*}
and  
\begin{align*}
\frac{1}{n}\sum_{i=1}^{n}\E\cro{V_{i}^{2}}&\le 2a_{2}\cro{(\cc_{2}^{2}+\cc_{1}^{2})\ell(\overline P\et,P)+\cc_{2}^{2}\ell(\overline P\et,Q')+\cc_{1}^{2}\ell(\overline P\et,Q)}
\end{align*}
we may apply Lemma~\ref{lem-Pois} and using the notation $\Lambda_{1}=\tau\phi(\beta l_{1})$, $\Lambda_{2}=\tau\phi(\beta l_{2})$ and Assumption~\ref{Hypo-0}, we get 
\begin{align}
&\frac{1}{n\beta}\log\cro{\prod_{i=1}^{n}\E\cro{\exp\cro{\beta U_{i}}}}\nonumber\\
&\le {\phi(\beta l_{1})}\beta a_{2}\cro{(\cc^{2}+\cc_{1}^{2})\ell(\overline P\et,P)+\cc^{2}\ell(\overline P\et,Q')+\cc_{1}^{2}\ell(\overline P\et,Q)}\nonumber\\
&\le 2\Lambda_{1} \beta a_{2}\cro{\cc^{2}+\cc_{1}^{2}}\ell(\overline P\et,\PAP)\nonumber \\
&\quad +\Lambda_{1} \beta a_{2}\cro{(\cc^{2}+\cc_{1}^{2})\ell(\PAP,P)+\cc^{2}\ell(\PAP,Q')+\cc_{1}^{2}\ell(\PAP,Q)}\label{eq-Th2-01b}
\end{align}
and similarly
\begin{align}
&\frac{1}{n\beta}\log\cro{\prod_{i=1}^{n}\E\cro{\exp\cro{\beta V_{i}}}}\nonumber\\
&\le 2\Lambda_{2} \beta a_{2}\cro{\cc_{2}^{2}+\cc_{1}^{2}}\ell(\overline P\et,\PAP)\nonumber \\
&\quad +\Lambda_{2} \beta a_{2}\cro{(\cc_{2}^{2}+\cc_{1}^{2})\ell(\PAP,P)+\cc_{2}^{2}\ell(\PAP,Q')+\cc_{1}^{2}\ell(\PAP,Q)}.\label{eq-Th2-01}
\end{align}

It follows from~\eref{eq-Th1-02b} that 
\begin{align*}
E_{1}&=n^{-1}\ac{\cc\E\cro{\gT(\bsX,P,Q')}-\cc_{1}\E\cro{\gT(\bsX,P,Q)}}\\
&\quad +2\Lambda_{1} \beta a_{2}\cro{\cc^{2}+\cc_{1}^{2}}\ell(\overline P\et,\PAP)\nonumber \\
&\quad +\Lambda_{1} \beta a_{2}\cro{(\cc^{2}+\cc_{1}^{2})\ell(\PAP,P)+\cc^{2}\ell(\PAP,Q')+\cc_{1}^{2}\ell(\PAP,Q)}\\
&\le\cro{e_{0}a_{1}+2\Lambda_{1} \beta a_{2}\pa{\cc^{2}+\cc_{1}^{2}}}\ell(\overline P\et,\PAP)\\
&\quad -\cro{\tau^{-1}\cc_{0}a_{1}-\Lambda_{1} \beta a_{2}(\cc^{2}+\cc_{1}^{2})}\ell(\PAP,P)\\
&\quad - \cro{\tau^{-1}\cc a_{1}-\Lambda_{1} \beta a_{2}\cc^{2}}\ell(\PAP,Q')\\
&\quad +  \cro{\tau \cc_{1}a_{0}+\Lambda_{1} \beta a_{2}\cc_{1}^{2}}\ell(\PAP,Q).
\end{align*}
Using the definitions~\eref{def-delta1} of $\overline \cc_{1}$ and \eref{def-delta2} of $\overline \cc_{2}$, , that is, 
\begin{align*}
\overline \cc_{1}=\cc_{0}-\tau \Lambda_{1}\beta a_{2}a_{1}^{-1}(\cc^{2}+\cc_{1}^{2})\quad \text{and}\quad \overline \cc_{2}=\cc-\tau\Lambda_{1} \beta a_{2}a_{1}^{-1}\cc^{2}
\end{align*}
and setting 
\begin{align*}
e_{3}&=e_{0}+2\Lambda_{1} \beta \frac{a_{2}\pa{\cc^{2}+\cc_{1}^{2}}}{a_{1}}\\
e_{4}&=\frac{1}{a_{1}}\cro{\tau \cc_{1}a_{0}+\Lambda_{1} \beta a_{2}\cc_{1}^{2}}
\end{align*}
and arguing as in the proof of inequality~\eref{eq-th01-10b}, we deduce from~\eref{eq-Th2-01b} that 
\begin{align}
&\log\E\cro{\exp\cro{\beta\pa{\cc\gT(\bsX,P,Q')-\cc_{1}\gT(\bsX,P,Q)}}}\nonumber \\
&\le n\beta E_{1}\nonumber \\
&\le n\beta a_{1}\cro{e_{3}\ell(\overline P\et,\PAP)-\tau^{-1}\cro{\overline \cc_{1}\ell(\PAP,P)+\overline \cc_{2}\ell(\PAP,Q')}+e_{4}\ell(\PAP,Q)}.
\label{eq-th02-10b}
\end{align}

It follows from~\eref{eq-Th1-02} that 
\begin{align*}
E_{2}&=n^{-1}\ac{\cc_{2}\E\cro{\gT(\bsX,P,Q')}-\cc_{1}\E\cro{\gT(\bsX,P,Q)}}\\
&\quad +2\Lambda_{2} \beta a_{2}\cro{\cc_{2}^{2}+\cc_{1}^{2}}\ell(\overline P\et,\PAP)\nonumber \\
&\quad +\Lambda_{2} \beta a_{2}\cro{(\cc_{2}^{2}+\cc_{1}^{2})\ell(\PAP,P)+\cc_{2}^{2}\ell(\PAP,Q')+\cc_{1}^{2}\ell(\PAP,Q)}\\
&\le  \pa{e_{1}+\cc_{2}}a_{1}\ell(\overline P\et,\PAP)+\tau\cc_{0}'a_{1}\ell(\PAP,P)-\tau^{-1}\cc_{2} a_{1}\ell(\PAP,Q')\\
&\quad +\tau \cc_{1}a_{0}\ell(\PAP,Q)+2\Lambda_{2} \beta a_{2}\cro{\cc_{2}^{2}+\cc_{1}^{2}}\ell(\overline P\et,\PAP)\nonumber \\
&\quad +\Lambda_{2} \beta a_{2}\cro{(\cc_{2}^{2}+\cc_{1}^{2})\ell(\PAP,P)+\cc_{2}^{2}\ell(\PAP,Q')+\cc_{1}^{2}\ell(\PAP,Q)}\\
&=\cro{\pa{e_{1}+\cc_{2}}a_{1}+2\Lambda_{2} \beta a_{2}\pa{\cc_{2}^{2}+\cc_{1}^{2}}}\ell(\overline P\et,\PAP)\nonumber \\
&\quad + \cro{\tau\cc_{0}'a_{1}+\Lambda_{2} \beta a_{2}(\cc_{2}^{2}+\cc_{1}^{2})}\ell(\PAP,P)\\
&\quad -\cro{\tau^{-1}\cc_{2} a_{1}-\Lambda_{2} \beta a_{2}\cc_{2}^{2}}\ell(\PAP,Q')\\
&\quad + \cro{\tau \cc_{1}a_{0}+\Lambda_{2} \beta a_{2}\cc_{1}^{2}}\ell(\PAP,Q).
\end{align*}

Using the definition~\eref{def-delta3} of $\overline\cc_{3}$, , that is,  
\[
\overline \cc_{3}=\cc_{2}-\tau\Lambda_{2} \beta a_{2}a_{1}^{-1}\cc_{2}^{2},
\]
and setting 
\begin{align*}
e_{5}&=e_{1}+\cc_{2}+2\Lambda_{2} \beta \frac{a_{2}\pa{\cc_{2}^{2}+\cc_{1}^{2}}}{a_{1}}, &e_{6}&=\tau\cc_{0}'+\Lambda_{2} \beta \frac{a_{2}(\cc_{2}^{2}+\cc_{1}^{2})}{a_{1}}\\
e_{7}&=\frac{1}{a_{1}}\cro{\tau \cc_{1}a_{0}+\Lambda_{2} \beta a_{2}\cc_{1}^{2}},
\end{align*}
and arguing as in the proof of~\eref{eq-th01-10}, we deduce from~\eref{eq-Th2-01} that 
\begin{align}
&\log\E\cro{\exp\cro{\beta\pa{\cc_{2}\gT(\bsX,P,Q')-\cc_{1}\gT(\bsX,P,Q)}}}\nonumber \\
&\le n\beta E_{2}\nonumber \\
&=n\beta a_{1}\pa{e_{5}\ell(\overline P\et,\PAP)+e_{6}\ell(\PAP,P)-\tau^{-1}\overline \cc_{3}\ell(\PAP,Q')+e_{7}\ell(\PAP,Q)}.\label{eq-th02-10}
\end{align}

Under our assumption on $\beta$, we know that the quantities $\overline \cc_{2}$ and $\overline\cc_{3}$ are positive and that $2\gamma<\tau^{-1}\pa{\overline \cc_{2}\wedge \overline \cc_{3}}$. We may therefore apply Lemma~\ref{lem-som} with $\gamma_{0}=\tau^{-1} \overline\cc_{2}$ and $\gamma_{0}=\tau^{-1} \overline\cc_{3}$ successively and get
\begin{align}\label{eq-th02-11b}
\int_{\sM}\exp\cro{-\tau^{-1} \overline\cc_{2}n\beta a_{1}\ell(\PAP,Q')}d\pi(Q')&\le  \pi\pa{\sB}\exp\cro{\overline \Xi_{1}}
\end{align}
and
\begin{align}\label{eq-th02-11}
\int_{\sM}\exp\cro{-\tau^{-1} \overline\cc_{3}n\beta a_{1}\ell(\PAP,Q')}d\pi(Q')&\le  \pi\pa{\sB}\exp\cro{\overline \Xi_{1}}
\end{align}
with 
\begin{equation}\label{def-xibar1}
\overline \Xi_{1}=\log\cro{1+\frac{\exp\cro{-\pa{\tau^{-1}( \overline\cc_{2}\wedge  \overline\cc_{3})-\gamma}}}{1-\exp\cro{-\pa{\tau^{-1}( \overline\cc_{2}\wedge  \overline\cc_{3})-2\gamma}}}}.
\end{equation}

Putting~\eref{eq-th02-10} and~\eref{eq-th02-11} together, we obtain that for all $(P,Q)\in\sB^{2}$
\begin{align*}
&\exp\cro{\gL(P,Q)}\\
&=\int_{\sM}\E\cro{\exp\cro{\beta\pa{\cc_{2}\gT(\bsX,P,Q')-\cc_{1}\gT(\bsX,P,Q)}}}d\pi(Q')\\
&\le  \exp\cro{n\beta a_{1}\pa{e_{5}\ell(\overline P\et,\PAP)+e_{6}\ell(\PAP,P)+e_{7}\ell(\PAP,Q)}}\\
&\quad \times \int_{\sM}\exp\cro{-\tau^{-1}\overline \cc_{3}n \beta a_{1}\ell(\PAP,Q')}d\pi(Q')\\
&\le\pi\pa{\sB}\exp\cro{\overline \Xi_{1}+n\beta a_{1}\pa{e_{5}\ell(\overline P\et,\PAP)+e_{6}\ell(\PAP,P)+e_{7}\ell(\PAP,Q)}}\\
&\le \pi\pa{\sB}\exp\cro{\overline \Xi_{1}+n\beta a_{1}\pa{e_{5}\ell(\overline P\et,\PAP)+(e_{6}+e_{7})\ray}}.
\end{align*}
Consequently, 
\begin{align*}
&\cro{\int_{\sB^{2}}\exp\cro{-\gL(P,Q)}d\pi_{\sB}(P)d\pi_{\sB}(Q)}^{-1}\\
&\le \pi\pa{\sB}\exp\cro{\overline \Xi_{1}+n\beta a_{1}\pa{e_{5}\ell(\overline P\et,\PAP)+(e_{6}+e_{7})\ray}}.
\end{align*}
We deduce from~\eref{eq-borne01} that 
\begin{align*}
\P(\co{A})&\le \frac{z}{\pi\pa{\sB}}\exp\cro{\overline \Xi_{1}+n\beta a_{1}\pa{e_{5}\ell(\overline P\et,\PAP)+(e_{6}+e_{7})\ray}}.
\end{align*}
In particular, $\P(\co{A})\le e^{-\xi}$ for $z$ satisfying 
\begin{align}
\log\pa{\frac{1}{z}}=\xi+\log\frac{1}{\pi(\sB)}+\overline \Xi_{1}+n\beta a_{1}\cro{e_{5}\ell(\overline P\et,\PAP)+(e_{6}+e_{7})\ray}.\label{def-zb}
\end{align}
Putting~\eref{eq-th02-10b} and~\eref{eq-th02-11b} together, we obtain that  for all $Q\in\sB$
\begin{align*}
&\exp\cro{\gM(P,Q)}\\
&=\int_{\sM}\E\cro{\exp\cro{\beta\pa{\cc\gT(\bsX,P,Q')-\cc_{1}\gT(\bsX,P,Q)}}}d\pi(Q')\\
&\le  \exp\cro{n\beta a_{1}\pa{e_{3}\ell(\overline P\et,\PAP)-\tau^{-1} \overline\cc_{1}\ell(\PAP,P)+e_{4}\ell(\PAP,Q)}} \\
&\quad \times \int_{\sM}\exp\cro{-\tau^{-1} \overline\cc_{2} n \beta a_{1}\ell(\PAP,Q')}d\pi(Q')\\
&\le \pi(\sB)\exp\cro{\overline \Xi_{1}+n\beta a_{1}\pa{e_{3}\ell(\overline P\et,\PAP)+e_{4}\ray-\tau^{-1} \overline\cc_{1}\ell(\PAP,P)}}.
\end{align*}
We derive from Lemma~\ref{LaplaceZ} that 
\begin{align*}
&\E\cro{\exp\cro{-\beta\gT(\bsX,P)}}\nonumber\\
&\le \frac{1}{\pi(\sB)}\cro{\int_{\sB}\exp\cro{-\gM(P,Q)}d\pi_{\sB}(Q)}^{-1}\\
&\le  \exp\cro{\overline \Xi_{1}+n\beta a_{1}\pa{e_{3}\ell(\overline P\et,\PAP)+e_{4}\ray-\tau^{-1} \overline\cc_{1}\ell(\PAP,P)}},
\end{align*}
and consequently,  
\begin{align}
&\int_{\co{\sB}(\PAP,2^{J}\ray)}\E\cro{\exp\cro{-\beta \gT(\bsX,P)}}d\pi(P)\nonumber\\
&\le \exp\cro{\overline \Xi_{1}+n\beta a_{1}\pa{e_{3}\ell(\overline P\et,\PAP)+e_{4}\ray}}\nonumber \\
&\quad \times \int_{\co{\sB}(\PAP,2^{J}\ray)}\exp\cro{-\tau^{-1} \overline \cc_{1}n\beta a_{1} \ell(\PAP,P)}d\pi(P).\label{eq-th02-12b}
\end{align}
Since under our assumptions, $ \overline\cc_{1}>0$ and $2\gamma<\tau^{-1} \overline \cc_{1}$ we may apply Lemma~\ref{lem-som} with $\gamma_{0}=\tau^{-1} \overline \cc_{1}$, and setting $e_{8}=\tau^{-1} \overline\cc_{1}-2\gamma$ which leads to
\begin{align*}
& \int_{\co{\sB}(\PAP,2^{J}\ray)}\exp\cro{-\tau^{-1} \overline\cc_{1} n\beta a_{1} \ell(\PAP,P)}d\pi(P)\le \pi\pa{\sB}\exp\cro{\overline \Xi_{2}-e_{8}n\beta a_{1}2^{J}\ray}.
\end{align*}
with 
\begin{equation}\label{def-xibar2}
\overline \Xi_{2}=-\gamma +\log\cro{\frac{1}{1-\exp\cro{-e_{8}}}},
\end{equation}
which together with~\eref{eq-th02-12b} leads to 
\begin{align}
&\int_{\co{\sB}(\PAP,2^{J}\ray)}\E\cro{\exp\cro{-\beta \gT(\bsX,P)}}d\pi(P)\nonumber\\
&\le \pi\pa{\sB}\exp\cro{\overline \Xi_{1}+\overline \Xi_{2}+n\beta a_{1}\pa{e_{3}\ell(\overline P\et,\PAP)+e_{4}\ray-e_{8}2^{J}\ray}}.\label{eq-th02-13b}
\end{align}
Using the definition~\eref{def-zb} of $z$, we deduce that 
\begin{align*}
&\log\cro{\frac{1}{z}\int_{\co{\sB}(\PAP,2^{J}\ray)}\E\cro{\exp\cro{-\beta \gT(\bsX,P)}}d\pi(P)}\\
&\le \log\pa{\frac{1}{z}}+\log \pi(\sB)+\overline \Xi_{1}+\overline \Xi_{2}+n\beta a_{1}\pa{e_{3}\ell(\overline P\et,\PAP)+e_{4}\ray-e_{8}2^{J}\ray}\\
&=\xi+\log\frac{1}{\pi(\sB)}+\overline \Xi_{1}+n\beta a_{1}\cro{e_{5}\ell(\overline P\et,\PAP)+(e_{6}+e_{7})\ray}\\
&\quad +\log \pi(\sB)+\overline \Xi_{1}+\overline \Xi_{2}+n\beta a_{1}\pa{e_{3}\ell(\overline P\et,\PAP)+e_{4}\ray-e_{8}2^{J}\ray}\\
&= \xi +2\overline \Xi_{1}+\overline \Xi_{2}+n\beta a_{1}\cro{\pa{e_{3}+e_{5}}\ell(\overline P\et,\PAP)+(e_{4}+e_{6}+e_{7})\ray}\\
&\quad - e_{8}n\beta a_{1}2^{J}\ray.
\end{align*}
The right-hand side is not larger than $-\xi$ provided that 
\begin{align}\label{J-thm2}
2^{J}&\ge \frac{1}{e_{8}}\cro{\frac{2\xi+2\overline \Xi_{1}+\overline \Xi_{2}}{n\beta a_{1}\ray}+\cro{\pa{e_{3}+e_{5}}\frac{\ell(\overline P\et,\PAP)}{\ray}+e_{4}+e_{6}+e_{7}}}.
\end{align}
Using the fact that $\ray_{n}(\beta,\PAP)\ge 1/(n \beta a_{1})$, with the choice 
\[
\ray=\ell(\overline P\et,\PAP)+\ray_{n}(\beta,\PAP)+\frac{2\xi}{n\beta a_{1}}\ge \ell(\overline P\et,\PAP)+ \frac{1+2\xi}{n \beta a_{1}}\ge \frac{1}{n\beta a_{1}}, 
\]
the right-hand side of~\eref{J-thm2} satisfies 
\begin{align*}
 &\frac{1}{e_{8}}\cro{\frac{2\xi+2\overline \Xi_{1}+\overline \Xi_{2}}{n\beta a_{1}\ray}+\cro{\pa{e_{3}+e_{5}}\frac{\ell(\overline P\et,\PAP)}{\ray}+e_{4}+e_{6}+e_{7}}}\\
 &\le \frac{1}{e_{8}}\cro{2\overline \Xi_{1}+\overline \Xi_{2}+e_{4}+e_{6}+e_{7}+\frac{(e_{3}+e_{5})\vee 1}{\ray}\pa{\ell(\overline P\et,\PAP)+\frac{2\xi}{n\beta a_{1}}}}\\
 &\le \frac{2\overline \Xi_{1}+\overline \Xi_{2}+e_{4}+e_{6}+e_{7}+(e_{3}+e_{5})\vee 1}{e_{8}}.
 \end{align*}
Inequality~\eref{J-thm2} holds for $J\in\N$ such that 
\[
2^{J}\ge \frac{2\overline \Xi_{1}+\overline \Xi_{2}+e_{4}+e_{6}+e_{7}+(e_{3}+e_{5})\vee 1}{e_{8}}\vee 1>2^{J-1},
\]
and we may take 
\begin{align}
\kappa_{0}&= \tau\cro{\frac{2\cro{2\overline \Xi_{1}+\overline \Xi_{2}+e_{4}+e_{6}+e_{7}+(e_{3}+e_{5})\vee 1}}{e_{8}}\vee 1+1}\label{def-Km0}\\
&\ge \tau\pa{2^{J}+1}.\nonumber
\end{align}
In complements to constants listed at the end of the proof of Theorem~\ref{main1}, we recall that 
\[
\Lambda_{1}=\tau\phi(\beta l_{1}),\quad \Lambda_{2}=\tau\phi(\beta l_{2})
\]
\begin{align*}
\overline \cc_{1}&=\cc_{0}-\tau \Lambda_{1}\beta \frac{a_{2}(\cc^{2}+\cc_{1}^{2})}{a_{1}}, &\overline \cc_{2}&=\cc-\tau\Lambda_{1} \beta \frac{a_{2}\cc^{2}}{a_{1}},&\overline \cc_{3}&=\cc_{2}-\tau\Lambda_{2} \beta \frac{a_{2}\cc_{2}^{2}}{a_{1}},
\end{align*}
%
\begin{align*}
e_{3}&=e_{0}+2\Lambda_{1} \beta \frac{a_{2}\pa{\cc^{2}+\cc_{1}^{2}}}{a_{1}}, & e_{4}&=\frac{1}{a_{1}}\cro{\tau \cc_{1}a_{0}+\Lambda_{1} \beta a_{2}\cc_{1}^{2}},\\
e_{5}&=e_{1}+\cc_{2}+2\Lambda_{2} \beta \frac{a_{2}\pa{\cc_{2}^{2}+\cc_{1}^{2}}}{a_{1}}, & 
e_{6}&=\tau\cc_{0}'+\Lambda_{2} \beta \frac{a_{2}(\cc_{2}^{2}+\cc_{1}^{2})}{a_{1}},\\
e_{7}&=\frac{1}{a_{1}}\cro{\tau \cc_{1}a_{0}+\Lambda_{2} \beta a_{2}\cc_{1}^{2}},& e_{8}&=\tau^{-1} \overline\cc_{1}-2\gamma,\\
\end{align*}
and 
\begin{align*}
\overline \Xi_{1}&=\log\cro{1+\frac{\exp\cro{-\pa{\tau^{-1}( \overline\cc_{2}\wedge  \overline\cc_{3})-\gamma}}}{1-\exp\cro{-\pa{\tau^{-1}( \overline\cc_{2}\wedge  \overline\cc_{3})-2\gamma}}}},\\ 
\overline \Xi_{2}&=-\gamma +\log\cro{\frac{1}{1-\exp\cro{-e_{8}}}}.\\
\end{align*}

\subsection{Proof of Theorem~\ref{main1b}}
Let us take $\ray\ge \eps$ and set $\varpi=2\xi+1$ so that 
\[
\pi\pa{\co{\sB}}\le \pi\pa{\co{\sB}(\PAP,\eps)}\le e^{-\varpi}\pi\pa{\sB(\PAP,\eps)}\le e^{-\varpi}\pi\pa{\sB}.
\]
In order to prove the first part, let us go back to the proof of Theorem~\ref{main1}. Clearly, 
\begin{align*}
\int_{\sM}\exp\cro{-\tau^{-1}\cc n \beta a_{1}\ell(\PAP,Q')}d\pi(Q')\le 1=\pi\pa{\sB}+\pi\pa{\co{\sB}}\le \pi\pa{\sB}(1+e^{-\varpi})
\end{align*}
and similarly, 
\begin{align*}
\int_{\sM}\exp\cro{-\tau^{-1}\cc_{2}n \beta a_{1}\ell(\PAP,Q')}d\pi(Q')\le \pi\pa{\sB}(1+e^{-\varpi}).
\end{align*}
Inequalities \eref{eq-th01-11b} and \eref{eq-th01-11} are therefore satisfied with $\Xi_{1}=\log(1+e^{-1})$. 
Moreover, 
\begin{align*}
&\int_{\co{\sB}(\PAP,2^{J}\ray)}\exp\cro{-\tau^{-1}\cc_{0} n\beta a_{1} \ell(\PAP,P)}d\pi(P)\\
&\le \exp\cro{-\tau^{-1}\cc_{0} n\beta a_{1}2^{J}\ray}\pi\pa{\co{\sB}}\le \pi\pa{\sB} \exp\cro{-\varpi-\tau^{-1}\cc_{0} n\beta a_{1}2^{J}\ray}.
\end{align*}
We deduce from~\eref{eq-th01-12b} that 
\begin{align*}
&\int_{\co{\sB}(\PAP,2^{J}\ray)}\E\cro{\exp\cro{-\beta \gT(\bsX,P)}}d\pi(P)\\
&\le \exp\cro{\Xi_{1}+n\beta \pa{ e_{0}a_{1}\ell(\overline P\et,\PAP)+\tau\cc_{1}a_{0}\ray+\frac{l_{1}^{2} \beta}{8}}}\\
&\quad \times \pi\pa{\sB} \exp\cro{-\varpi-\tau^{-1}\cc_{0} n\beta a_{1}2^{J}\ray},
\end{align*}
and consequently, 
\begin{align*}
&\log\int_{\co{\sB}(\PAP,2^{J}\ray)}\E\cro{\exp\cro{-\beta \gT(\bsX,P)}}d\pi(P)\\
&\le \log \pi\pa{\sB} + \Xi_{1}-\varpi\\
&\quad +n\beta\cro{e_{0}a_{1}\ell(\overline P\et,\PAP)+\tau\cc_{1}a_{0}\ray+\frac{l_{1}^{2} \beta}{8}-\tau^{-1}\cc_{0}  a_{1}2^{J}\ray}.
\end{align*}
Using the definitions~\eref{def-z} of $z$ and~\eref{def-Delta2b} of $\Delta_{2}$, we deduce that 
\begin{align*}
&\log\cro{\frac{1}{z}\int_{\co{\sB}(\PAP,2^{J}\ray)}\E\cro{\exp\cro{-\beta \gT(\bsX,P)}}d\pi(P)}\\
&\le \xi+\log\frac{1}{\pi(\sB)}+\Xi_{1}+n\beta \Delta_{2}+\log \pi\pa{\sB} + \Xi_{1}-\varpi\\
&\quad +n\beta\cro{e_{0}a_{1}\ell(\overline P\et,\PAP)+\tau\cc_{1}a_{0}\ray+\frac{l_{1}^{2} \beta}{8}-\tau^{-1}\cc_{0}a_{1}2^{J}\ray}\\
&=\xi+2\Xi_{1}+n\beta\cro{\pa{e_{1}+\cc_{2}}a_{1}\ell(\overline P\et,\PAP)+e_{1}a_{1}\ray+\frac{l_{2}^{2}\beta}{8}}-\varpi\\
&\quad +n\beta\cro{e_{0}a_{1}\ell(\overline P\et,\PAP)+\tau\cc_{1}a_{0}\ray+\frac{l_{1}^{2} \beta}{8}-\tau^{-1}\cc_{0}  a_{1}2^{J}\ray}\\
&=\xi+2\Xi_{1}-\varpi\\
&\quad +n\beta a_{1}\cro{C_{1}\ell(\overline P\et,\PAP)+C_{2}\ray+\frac{(l_{1}^{2}+l_{2}^{2}) \beta}{8 a_{1}}-\tau^{-1}\cc_{0} 2^{J}\ray},
\end{align*}
where the constants $C_{1}$ and $C_{2}$ are the same as those defined in the proof of Theorem~\ref{main1}. If we choose $r=\ell(\overline P\et,\PAP)\vee (\beta/a_{1})\vee \eps$ and $J$ such that $\tau^{-1}c_{0}2^{J}\ge C_{1}+C_{2}+(l_{1}^{2}+l_{2}^{2})/8$, we obtain that 
\begin{align*}
\log\cro{\frac{1}{z}\int_{\co{\sB}(\PAP,2^{J}\ray)}\E\cro{\exp\cro{-\beta \gT(\bsX,P)}}d\pi(P)}\le \xi+2\Xi_{1}-\varpi\le -\xi
\end{align*}
since $\varpi=2\xi+1\ge 2(\xi+\Xi_{1})$. We conclude as in the proof of Theorem~\ref{main1}.

In order to prove the second part of Theorem~\ref{main1b}, we go back to the proof of Theorem~\ref{main2}. The arguments are similar. As before,  
\begin{align*}
\int_{\sM}\exp\cro{-\tau^{-1} \overline\cc_{2}n\beta a_{1}\ell(\PAP,Q')}d\pi(Q')&\le  \pi\pa{\sB}(1+e^{-\varpi})
\end{align*}
and 
\begin{align*}
\int_{\sM}\exp\cro{-\tau^{-1} \overline\cc_{3}n\beta a_{1}\ell(\PAP,Q')}d\pi(Q')&\le  \pi\pa{\sB}(1+e^{-\varpi}).
\end{align*}
Inequalities \eref{eq-th02-11b} and \eref{eq-th02-11} are therefore both satisfied with $\overline \Xi_{1}=\log(1+e^{-1})$. Moreover 
\begin{align*}
& \int_{\co{\sB}(\PAP,2^{J}\ray)}\exp\cro{-\tau^{-1} \overline\cc_{1} n\beta a_{1} \ell(\PAP,P)}d\pi(P)\\
&\le \pi\pa{\sB}\exp\cro{-\varpi-\tau^{-1} \overline\cc_{1} n\beta a_{1}2^{J}\ray},
\end{align*}
and we deduce from \eref{eq-th02-12b} that 
\begin{align*}
&\int_{\co{\sB}(\PAP,2^{J}\ray)}\E\cro{\exp\cro{-\beta \gT(\bsX,P)}}d\pi(P)\\
&\le \exp\cro{\overline \Xi_{1}+n\beta a_{1}\pa{e_{3}\ell(\overline P\et,\PAP)+e_{4}\ray}}\\
&\quad \times \int_{\co{\sB}(\PAP,2^{J}\ray)}\exp\cro{-\tau^{-1} \overline \cc_{1}n\beta a_{1} \ell(\PAP,P)}d\pi(P)\\
&\le \pi\pa{\sB}\exp\cro{\overline \Xi_{1}+n\beta a_{1}\cro{e_{3}\ell(\overline P\et,\PAP)+e_{4}\ray-\tau^{-1} \overline\cc_{1} 2^{J}\ray}-\varpi}.
\end{align*}
Using the definition~\eref{def-zb} of $z$, we deduce that 
\begin{align*}
&\log\cro{\frac{1}{z}\int_{\co{\sB}(\PAP,2^{J}\ray)}\E\cro{\exp\cro{-\beta \gT(\bsX,P)}}d\pi(P)}\\
&\le \xi+\log\frac{1}{\pi(\sB)}+\overline \Xi_{1}+n\beta a_{1}\cro{e_{5}\ell(\overline P\et,\PAP)+(e_{6}+e_{7})\ray}\\
&\quad +\log \pi\pa{\sB}+ \overline \Xi_{1}+n\beta a_{1}\cro{e_{3}\ell(\overline P\et,\PAP)+e_{4}\ray-\tau^{-1} \overline\cc_{1} 2^{J}\ray}-\varpi\\
&=\xi+2\overline \Xi_{1}-\varpi\\
&\quad +n\beta a_{1}\cro{(e_{3}+e_{5})\ell(\overline P\et,\PAP)+(e_{4}+e_{6}+e_{7})\ray-\tau^{-1} \overline\cc_{1} 2^{J}\ray}.\end{align*}
Taking $r=\ell(\overline P\et,\PAP)\vee \eps \ge \eps$ and $J\ge 0$ such that 
\[
\tau^{-1} \overline\cc_{1} 2^{J}\ge e_{3}+e_{5}+ e_{4}+e_{6}+e_{7}
\]
we obtain that 
\[
\log\cro{\frac{1}{z}\int_{\co{\sB}(\PAP,2^{J}\ray)}\E\cro{\exp\cro{-\beta \gT(\bsX,P)}}d\pi(P)}\le -\xi
\]
and we conclude as before.

\section{Other proofs}\label{sct-OP}
\subsection{Proof of Lemma~\ref{lem-gamma}}\label{sect-pf-lem-gamma}
Let $Y$ be a random variable with gamma distribution $\gamma(s,1)$. Since $\sigma Y\sim\gamma(s,\sigma)$, it is sufficient to prove the result for $\sigma=1$. Using the inequality $\log(1-x)\ge -x/(1-x)$ which holds for all $x\in [0,1)$, we obtain that 
\[
\log\E\cro{e^{\beta(Y-s)}}=-s\cro{\log(1-\beta)+\beta}\le \frac{s \beta^{2}}{1-\beta}\quad \text{for all $\beta\in [0,1)$.}
\]
Applying Lemma~8.2 in Birg\'e~\citeyearpar{MR1836557} with $a=\sqrt{s}$ and $b=1$, we obtain that 
\[
\P\cro{Y\ge s+2\sqrt{s \xi}+\xi}\le e^{-\xi}\quad \text{for all $\xi\ge 0$}
\]
which proves~\eref{lem-gamma00}.  Let us now turn to the lower bound. For $x\ge 0$, let us set
\[
g(x)=x-\log(1+x)\le \pa{\frac{x^{2}}{2}}\wedge x.
\]
For all $t,u\ge 0$,
\begin{align*}
\int_{t+u}^{+\infty}x^{t}e^{-x}dx&=\int_{u}^{+\infty}(t+y)^{t}e^{-t-y}dy=t^{t}e^{-t}\int_{u}^{+\infty}e^{-tg(y/t)}dy\\
&\ge t^{t}e^{-t}\pa{\int_{u}^{+\infty}e^{-y^{2}/(2t)}dy\vee \int_{u}^{+\infty}e^{-y}dy}\\
&=t^{t}e^{-t}\cro{\pa{\sqrt{2\pi t}\;\overline F\pa{\frac{u}{\sqrt{t}}}}\vee e^{-u}},
\end{align*}
where $\overline F(z)=\P\cro{\cN(0,1)\ge z}$ for all $z\in\R$. Besides, it follows from Whittaker and Watson~\citeyearpar{MR1424469}[page 253] that 
\begin{equation}\label{eq-gamma01}
t^{t-1/2}e^{-t}\sqrt{2\pi}\le \Gamma(t)\le  t^{t-1/2}e^{-t}\sqrt{2\pi}\exp[1/(12t)].
\end{equation}
Taking $t=s-1>0$, we deduce that 
\begin{align*}
\P\cro{Y\ge t+u}&=\frac{1}{\Gamma(t+1)}\int_{t+u}^{+\infty}x^{t}e^{-x}dx=\frac{1}{t\Gamma(t)}\int_{t+u}^{+\infty}x^{t}e^{-x}dx\\
&\ge\cro{\overline F\pa{\frac{u}{\sqrt{t}}}e^{-1/(12t)}}\vee \cro{\frac{e^{-u-1/(12t)}}{\sqrt{2\pi t}}}.
\end{align*}
Using the fact that $\overline F(\overline \Phi^{-1}(z))=e^{-z}$ for all $z\ge 0$, we obtain that for the choice 
\[
u=\cro{\sqrt{t}\;\overline \Phi^{-1}\pa{\xi-\frac{1}{12t}}}\vee \log\pa{\frac{e^{\xi-1/(12t)}}{\sqrt{2\pi t}}},
\]
which is nonnegative for $\xi\ge \log 2+1/(12t)$, the quantity $\P\cro{Y\ge t+u}$ is at least $e^{-\xi}$, which proves \eref{lem-gamma01}.

\subsection{Proof of Theorem~\ref{prop-credibleset}}\label{sect-pf-prop-credibleset}
Throughout this proof, $a_{0}=2$, $a_{1}=3/16$, $\beta=2\gamma=1/500$ and $\kappa$ denotes a positive numerical constant that may vary from line to line. It follows from Corollary~\ref{cor-odgh} that for $n$ large enough, $r_{n}(\beta,P_{\theta\et})\le r_{n}\et=\kappa k/n$. Applying our Corollary~\ref{cor-hell} with $\ell=h^{2}$ (and $2\xi$ in place of $\xi$), we obtain that for $n$ large enough, with a probability at least $1-2e^{-\xi}$, 
\[
1-e^{-\xi}\le \widehat \nu_{\bsX}^{h}\pa{\ac{\gtheta\in \Theta,\; h^{2}(\gtheta,\gtheta\et)\le r_{n}(\xi)}} \text{ with } r_{n}(\xi)=\frac{\kappa(k+\xi)}{n}.
\]
We know by Proposition~\ref{prop-regulstat} that under the assumptions of Corollary~\ref{cor-odgh}, Assumption~\ref{ass-para-1}-\ref{ass-para-1i} is satisfied with $\ab{\cdot}_{*}$ given by~\eref{eq-normh} and $\eps=1/2$ for $n$ large enough. This implies that for $n$ large
\[
\ac{\gtheta\in \Theta,\; h^{2}(\gtheta,\gtheta\et)\le r_{n}(\xi)}\subset \ac{\gtheta\in \Theta,\; \ab{\gtheta-\gtheta\et}_{*}^{2}\le 2r_{n}(\xi)},
\]
which leads to \eref{eq-credibleset}.

\subsection{Proof of Proposition~\ref{Mbeta-trans}}\label{Pf-Mbeta-trans}
Let us denote by $F_{\sigma}$ the distribution function of $\nu_{\sigma}$. Throughout this proof, we fix some $\theta\et \in [-\sigma t, \sigma t]$. Our aim is to prove that $P_{\theta\et}$ belongs to $\sM(\overline \beta)$. 

Since the total variation distance is translation invariant, $\norm{P_{\theta}-P_{\theta\et}}=\norm{P_{\theta-\theta\et}-P_{0}}=\norm{P_{\theta\et-\theta}-P_{0}}$ and consequently, for all $r\in [0,1)$, 
\[
\ac{\theta\in \Theta,\; \norm{P_{\theta}-P_{\theta\et}}\le r}=\ac{\theta\in \Theta,\; \ab{\theta\et-\theta}\le \varphi(r)}\quad \text{for all $r\in [0,1)$}
\]
while for $r\ge 1$, $\ac{\theta\in \Theta,\; \norm{P_{\theta}-P_{\theta\et}}\le r}=\Theta=\R$.

We set  $r_{0}=\sup\{r>0,\; \varphi(r)\le \sigma t\}$ and distinguish between two cases.

\noindent
{\bf Case 1:} Assume $r_{0}\le 1/4$. For all $r<r_{0}$, $\varphi(r)<\sigma t$, $2r<1$, and since $q$ is symmetric, positive and decreasing on $\R_{+}$, 
\begin{align*}
\frac{\pi(\sB(P_{\theta\et},2r))}{\pi(\sB(P_{\theta\et},r))}&=\frac{\nu_{\sigma}\pa{\ac{\theta\in\R,\; \norm{P_{\theta}-P_{ \theta\et}}\le 2r}}}{\nu_{\sigma}\pa{\ac{\theta\in\R,\; \norm{P_{\theta}-P_{\theta\et}}\le r}}}\\
&=\frac{\nu_{\sigma}\pa{\ac{\theta\in\R,\;\ab{\theta-\theta\et}\le \varphi(2r)}}}{\nu_{\sigma}\pa{\ac{\theta\in\R,\; \ab{\theta-\theta\et}\le \varphi(r)}}}\le \frac{2q_{\sigma}(0)\varphi(2r)}{2q_{\sigma}(|\theta\et|+\varphi(r))\varphi(r)}\\
&\le   \frac{q_{\sigma}(0)\varphi(2r)}{q_{\sigma}(|\theta\et|+\sigma t)\varphi(r)}\le \frac{q_{\sigma}(0)\varphi(2r)}{q_{\sigma}(2\sigma t)\varphi(r)}\\
&=\frac{q(0)\varphi(2r)}{q(2 t)\varphi(r)}\le  \frac{\overline \Gamma }{q(2 t)}.
\end{align*}
For all $r_{0}<r<1$, $|\theta\et|\le \sigma t< \varphi(r)$, hence $F_{\sigma}(|\theta\et|-\varphi(r))\le F_{\sigma}(0)=1/2$ and $F_{\sigma}(|\theta\et|+\varphi(r))\ge F_{\sigma}(\varphi(r))\ge F_{\sigma}(\sigma t)= F_{1}(t)\ge 3/4$ under our assumption on $t$. 
Consequently, 
\begin{align*}
\frac{\pi(\sB(P_{\theta\et},2r))}{\pi(\sB(P_{\theta\et},r))}&\le \frac{1}{\nu_{\sigma}\pa{\ac{\theta\in\R,\; \ab{\theta-\theta\et}\le \varphi(r)}}}\\
&=\frac{1}{F_{\sigma}\pa{|\theta\et|+\varphi(r)}-F(|\theta\et|-\varphi(r))}\\
&\le \frac{1}{3/4-1/2}=  4.
\end{align*}
Note that the result also holds for $r=r_{0}$ by letting $r$ decrease to $r_{0}$.

\noindent
{\bf Case 2:} Assume that $r_{0}>1/4$. Then $\varphi(1/4)\le \sigma t$ and arguing as before, we obtain that for all $r\le 1/4<r_{0}$, 
\begin{align*}
\frac{\pi(\sB(P_{\theta\et},2r))}{\pi(\sB(P_{\theta\et},r))}&\le \frac{2q_{\sigma}(0)\varphi(2r)}{2q_{\sigma}(|\theta\et|+\varphi(r))\varphi(r)}=\frac{q_{\sigma}(0)\varphi(2r)}{q_{\sigma}(|\theta\et|+\varphi(1/4))\varphi(r)}\\
&\le \frac{q_{\sigma}(0)\varphi(2r)}{q_{\sigma}(2\sigma t)\varphi(r)}\le \frac{\overline \Gamma }{q(2 t)}.
\end{align*}

For all $r\in (1/4,1)$, $\varphi(r)\ge \varphi(1/4)$ and
\begin{align*}
\frac{\pi(\sB(P_{\theta\et},2r))}{\pi(\sB(P_{\theta\et},r))}&\le \frac{1}{\nu_{\sigma}\pa{\ac{\theta\in\R,\; \ab{\theta-\theta\et}\le \varphi(r)}}}\\
&\le\frac{1}{\nu_{\sigma}\pa{\ac{\theta\in\R,\; \ab{\theta-\theta\et}\le \varphi(1/4)}}}\\
&\le \frac{1}{2q_{\sigma}(|\theta\et|+\varphi(1/4))\varphi(1/4)}\\
&\le \frac{1}{2q_{\sigma}(2\sigma t)\varphi(1/4)}\le \frac{\overline \Gamma \sigma}{q(2t)}.
\end{align*}

We obtain that in any case, for all $r\in (0,1)$ and $\theta\et\in [-\sigma t,\sigma t]$,
\begin{align}
\log\pa{\frac{\pi(\sB(P_{\theta\et},2r))}{\pi(\sB(P_{\theta\et},r))}}\le \max\ac{\log\pa{\frac{\overline \Gamma \pa{\sigma \vee 1}}{q(2 t)}},\log 4}.\label{eq-fonda-r}
\end{align}
The inequality is also clearly true for $r\ge 1$ since then $\pi(\sB(P_{\theta\et},2r))=\pi(\sB(P_{\theta\et},r))=1$. Hence,  for all $r\ge a_{1}^{-1}\beta$
\begin{align*}
\frac{1}{n \gamma a _{1} r}\log\pa{\frac{\pi(\sB(P_{\theta\et},2r))}{\pi(\sB(P_{\theta\et},r))}}&\le \frac{1}{n \gamma \beta}\sup_{r>0}\log\pa{\frac{\pi(\sB(P_{\theta\et},2r))}{\pi(\sB(P_{\theta\et},r))}}\\
&\le \frac{1}{n \gamma \beta}\max\ac{\log\pa{\frac{\overline \Gamma \pa{\sigma \vee 1}}{q(2 t)}},\log 4}.
\end{align*}
The right-hand side is not larger than $\beta$ provided that it satisfies~\eref{eq-beta} and this lower bound is not smaller than $1/\sqrt{n}$ since $\gamma\le 1$. We conclude by using \eref{def-Mbetab}.

\subsection{Proof of Proposition~\ref{prop-priorex3}}\label{Sect-PLemex30}
Under our assumption on $q$, Assumption~\ref{Hypo-Trans} is satisfied and 
\[
\overline \Gamma=2^{1/s}\max\ac{q(0),2^{(1/s)-1}}.
\]
Let $t=(|\theta|/\sigma)\vee t_{0}$. Then, $\theta\in [-\sigma t,\sigma t]$, $\nu_{1}([t,+\infty))\le 1/4$ and inequality \eref{eq-fonda-r} holds true. We deduce from \eref{df-epsn} that 
\[
r_{n}(\beta,P_{\theta})\le  \frac{1}{\gamma n a_{1}\beta }\max\ac{\log\pa{\frac{\overline \Gamma \pa{\sigma \vee 1}}{q(2 t)}},\log 4}
\]
and the result follows from our specific choices of $a_{1},\gamma$ and $\beta$.

\subsection{Proof of Corollary~\ref{cor-Entrop}}\label{Proof-cor-Entrop}
We set for short $\Theta=\Theta[\eta,\delta]$ with the parameters $\eta$ and $\delta$ defined by~\eref{eq-etan} and \eref{eq-deltan} respectively and also define 
\begin{equation}\label{def-Jn}
J_{n}=\exp\cro{\frac{(K^{2}-1)\gamma\tau^{4}a_{1}^{2} n\eta_{n}^{2}}{2(k+1)}}
\end{equation}
so that $\sM_{n}(K)$ contains the elements $P=P_{(p,\gm,\sigma)}$ of $\sM$ such that 
\[
|\log\sigma|\vee \ab{\frac{\gm}{\sigma}}_{\infty}\le \log(1+\delta)J_{n}.
\]
Hereafter we fix $P=P_{(p,\gm,\sigma)}\in\sM_{n}(K)$. There exist $\theta=\theta(P)=(\PAP,\overline \gm,\overline \sigma)\in \Theta$ with $\overline \sigma=(1+\delta)^{j_{0}}$, $\overline \gm=\overline \sigma\delta \gj$, $(j_{0},\gj)\in\Z\times \Z^{k}$ such that 
\begin{equation}\label{eq-control00}
\frac{\overline \sigma}{(1+\delta)}\le \sigma<\overline \sigma\quad \text{and}\quad \overline m_{i}=j_{i}\overline \sigma\delta\le m_{i}<\overline m_{i}+\overline \sigma\delta,
\end{equation}
for all $i\in\{1,\ldots,k\}$. Consequently, 
\begin{align}
0\le \pa{1-\frac{\sigma}{\overline \sigma}}\le \frac{\delta}{1+\delta}< \delta\quad \text{and}\quad \ab{\frac{\gm-\overline \gm}{\overline \sigma}}_{\infty}\le \delta,\label{eq-approx00}
\end{align}
and we infer from \eref{eq-trans} and \eref{app00} and the fact that the total variation loss is translation and scale invariant that $P_{\theta}$ satisfies 
\begin{align*}
\ell\pa{P_{(p,\gm,\sigma)},P_{\theta}}&\le \ell\pa{P_{(p,\gm,\sigma)},P_{(\PAP,\gm,\sigma)}}+\ell\pa{P_{(\PAP,\gm,\sigma)},P_{(\PAP,\overline \gm,\overline \sigma)}}\\
&\le \ell\pa{P_{(p,\bs{0},1)},P_{(\PAP,\bs{0},1)}}+\ell\pa{P_{(\PAP,\bs{0},1)},P_{(\PAP,\frac{\overline \gm-\gm}{\sigma},\frac{\overline \sigma}{\sigma})}}\\
&\le \eta+\cro{A\pa{\ab{\frac{\gm-\overline \gm}{\overline \sigma}}_{\infty}^{s}+\pa{1-\frac{\sigma}{\overline \sigma}}^{s}}}\wedge 1\\
&\le \eta+2A\delta^{s}=2\eta.
\end{align*}

Besides, the parameters $(j_{0},\gj)\in\Z\times \Z^{k}$ can be controlled in the following way. Using that $\sigma\le \overline \sigma$, the inequality $\log(1+\delta)\le \delta$  and \eref{eq-approx00}, we obtain that for all $i\in\{1,\ldots,k\}$, 
\begin{align*}
\ab{j_{i}}&=\ab{\frac{\overline m_{i}}{\overline \sigma\delta}}=\frac{1}{\overline \sigma\delta}\ab{\overline m_{i}-m_{i}+m_{i}}\le \frac{1}{\overline \sigma\delta}\cro{\overline \sigma \delta+
\sigma\ab{\frac{m_{i}}{\sigma}}}\le 1+\frac{1}{\log(1+\delta)}\ab{\frac{m_{i}}{\sigma}}.
\end{align*}
Besides, 
\begin{align*}
j_{0}&=\frac{\log\overline \sigma}{\log(1+\delta)}=\frac{1}{\log(1+\delta)}\cro{-\log\pa{1+\frac{\sigma}{\overline \sigma}-1}+\log \sigma}\\
&\le \frac{1}{\log(1+\delta)}\cro{-\log\pa{1-\frac{\delta}{1+\delta}}+|\log \sigma|}\\
&=\frac{1}{\log(1+\delta)}\cro{\log\pa{1+\delta}+|\log \sigma|}\le 1+\frac{|\log \sigma|}{\log(1+\delta)}
\end{align*}
and using the inequality $\log(1+2x)\le 2\log(1+x)$, which holds for all $x\ge 0$, we obtain that
\begin{align*}
j_{0}&\ge \frac{\log \sigma}{\log(1+\delta)}\ge -\frac{|\log \sigma|}{\log(1+\delta)}\ge -\cro{1+\frac{|\log \sigma|}{\log(1+\delta)}}.
\end{align*}
Putting these inequalities together and using the fact that $P\in\sM_{n}(K)$, we get 
\begin{equation}\label{eq-b001}
\ab{(j_{0},\gj)}_{\infty}\le 1+\frac{1}{\log(1+\delta)}\cro{|\log \sigma|\vee \ab{\frac{\gm}{\sigma}}_{\infty}}\le 1+J_{n}.
\end{equation}

For all $r>0$, $e^{-L_{\theta}}\le \pi\pa{\sB(P_{\theta},r)}\le 1$ and these two inequalities together with the definition~\eref{eq-etan} of $\eta$ and Assumption~\ref{hypo-entro} imply that for all $r>0$
\begin{align*}
\frac{\pi\pa{\sB(P_{\theta},2r)}}{\pi\pa{\sB(P_{\theta},r)}}&\le \exp\cro{L_{\theta}}\le \exp\cro{\widetilde D(\eta)+2\sum_{i=0}^{k}\cro{\frac{L}{2}+\log(1+|j_{i}|)}}\\
&\le \exp\cro{\gamma\tau^{4}a_{1}^{2} n\eta^{2}+(k+1)\cro{L+2\log(1+|(j_{0},\gj)|_{\infty})}}.
\end{align*}
Using~\eref{eq-b001}, the definition \eref{def-Jn} of $J_{n}$ and the fact that $\log(2+x)\le \log 3+\log x$ for all $x\ge 1$, we derive that
\begin{align*}
\frac{\pi\pa{\sB(P_{\theta},2r)}}{\pi\pa{\sB(P_{\theta},r)}}&\le \exp\cro{\gamma\tau^{4}a_{1}^{2} n\eta^{2}+(k+1)L+2(k+1)\log(2+J_{n})},\\
&\le \exp\cro{K^{2}\gamma\tau^{4}a_{1}^{2} n\eta^{2}+(k+1)\pa{L+\log 9}}
\end{align*}
and since $\gamma=1/6\le L'=L+\log 9<3.1$, 
\begin{align*}
\frac{1}{n\beta a_{1}}\le r_{n}(\beta,P_{\theta})&\le \frac{1}{\gamma n\beta a_{1}}\cro{K^{2}\gamma\tau^{4}a_{1}^{2} n\eta^{2}+(k+1)L'}\\
&=\frac{1}{a_{1}\beta}\cro{K^{2}\tau^{4}a_{1}^{2} \eta^{2}+\frac{(k+1)L'}{\gamma n}}.
\end{align*}
For the choice of $\beta=\beta_{n}$ given by~\eref{def-b00}, 
\[
\beta\ge \sqrt{K^{2}\tau^{4}a_{1}^{2} \eta^{2}+\frac{(k+1)L'}{\gamma n}}\ge \sqrt{\frac{k+1}{n}}\vee \frac{K\eta}{2}
\]
hence, $r_{n}(\beta,P_{\theta})\le a_{1}^{-1}\beta$ and $P_{\theta}\in \sM(\beta)$. This implies that 
\begin{align*}
\inf_{P'\in\sM(\beta)}\ell(\overline P\et,P')+a_{1}^{-1}\beta&\le \ell(\overline P\et,P_{\theta})+a_{1}^{-1}\beta\\
&\le  \ell(\overline P\et,P)+\ell(P,P_{\theta})+a_{1}^{-1}\beta\\
&\le  \ell(\overline P\et,P)+ 2\eta+\cro{K\tau^{2}\eta+\frac{1}{a_{1}}\sqrt{\frac{(k+1)L'}{\gamma n}}},
\end{align*}
and the result follows by applying Corollary~\ref{cor-TV} and by using the fact that $P$ is arbitrary in $\sM_{n}(K)$.

\subsection{Proof of Lemma~\ref{lem-app00}}
For all $p\in\cM_{0}$, $\sigma\ge 1$ and $\gm\in\R^{k}$, the supports of the functions $\gx\mapsto p(\gx/\sigma)$ and $\gx\mapsto p((\gx-\gm)/\sigma)$ are included in the set 
$\cK=[0,\sigma]^{k}\cup\{\gm+\gx,\; \gx\in [0,\sigma]^{k}\}$ the Lebesgue measure of which is not larger than $2\sigma^{k}$. Consequently, using~\eref{app00b}, we deduce that for all $p\in\cM_{0}$, $\sigma\ge 1$ and $\gm\in\R^{k}$,
\begin{align*}
&\norm{P_{(p,\bs{0},1)}-P_{(p,\gm,\sigma)}}\\
&\quad \le \norm{P_{(p,\bs{0},1)}-P_{(p,\bs{0},\sigma)}}+\norm{P_{(p,\bs{0},\sigma)}-P_{(p,\gm,\sigma)}}\\
&\quad =\frac{1}{2}\int_{\R^{k}}\ab{p(\gx)-\frac{1}{\sigma^{k}}p\pa{\frac{\gx}{\sigma}}}d\gx+\frac{1}{2\sigma^{k}}\int_{\R^{k}}\ab{p\pa{\frac{\gx}{\sigma}}-p\pa{\frac{\gx-\gm}{\sigma}}}d\gx\\
&\quad \le \frac{1}{2}\int_{\R^{k}}\ab{p(\gx)-\frac{1}{\sigma^{k}}p\pa{\gx}}d\gx+\frac{1}{2\sigma^{k}}\int_{\R^{k}}\ab{p(\gx)-p\pa{\frac{\gx}{\sigma}}}d\gx\\
&\quad \quad +\frac{1}{2\sigma^{k}}\int_{\R^{k}}\ab{p\pa{\frac{\gx}{\sigma}}-p\pa{\frac{\gx-\gm}{\sigma}}}d\gx\\
&\quad \le \frac{1}{2}\int_{\R^{k}}\ab{p(\gx)-\frac{1}{\sigma^{k}}p\pa{\gx}}d\gx+\frac{1}{2\sigma^{k}}\int_{[0,1]^{k}}\ab{p(\gx)-p\pa{\frac{\gx}{\sigma}}}d\gx\\
&\quad \quad +\frac{1}{2\sigma^{k}}\int_{[0,\sigma]^{k}\setminus[0,1]^{k}}\ab{p\pa{\frac{\gx}{\sigma}}}d\gx
+\frac{1}{2\sigma^{k}}\int_{\cK}\ab{p\pa{\frac{\gx}{\sigma}}-p\pa{\frac{\gx-\gm}{\sigma}}}d\gx\\
&\quad \le \frac{1}{2}\pa{1-\frac{1}{\sigma^{k}}}+\frac{1}{2\sigma^{k}}\int_{[0,1]^{k}}L_{1}\pa{1-\frac{1}{\sigma}}^{s}\ab{\gx}^{s}d\gx\\
&\quad \quad + \frac{1}{2}\int_{[0,1]^{k}\setminus[0,1/\sigma]^{k}}\ab{p(\gx)}d\gx+\frac{L_{1}}{2\sigma^{k}}\int_{\cK}\ab{\frac{\gm}{\sigma}}^{s}d\gx\\
&\quad \le \frac{1}{2}\pa{1-\frac{1}{\sigma^{k}}}+\frac{L_{1} k^{s/2}}{2\sigma^{k}}\pa{1-\frac{1}{\sigma}}^{s}+\frac{L_{0}}{2}\pa{1-\frac{1}{\sigma^{k}}}+L_{1}\ab{\frac{\gm}{\sigma}}^{s}\\
&\quad \le  \frac{1}{2}\cro{1+L_{1}k^{s/2}+L_{0}}\pa{1-\frac{1}{\sigma}}^{s}+L_{1}\ab{\frac{\gm}{\sigma}}^{s}
\end{align*}
and~\eref{app00} is therefore satisfied with $A=L_{1}\vee [(1+L_{1}k^{s/2}+L_{0})/2]$.

\subsection{Proof of Lemma~\ref{reg00}}\label{Proof-reg00}
By doing the change of variables $u=x-m$ in~\eref{eq-reg001} if ever necessary, we may assume with no loss of generality that $m>0$. Then, since $p$ is nonincreasing in $(0,+\infty)$ and vanishes elsewhere $p(x-m)\ge p(x)$ for all $x\ge m$ and $p(x)\ge p(x-m)=0$ for all $x\in (0,m)$. Consequently, 
\begin{align*}
\int_{\R}\ab{p(x)-p(x-m)}dx&=\int_{0}^{m}p(x)dx+\int_{m}^{+\infty}\cro{p(x-m)-p(x)}dx\\
&=2\int_{0}^{m}p(x)dx+\int_{m}^{+\infty}p(x-m)dx-\int_{0}^{+\infty}p(x)dx\\
&\le 2mB+1-1,
\end{align*}
and we obtain~\eref{eq-reg001}. 

Since $\sigma\ge 1$, $p(x/\sigma)\ge p(x)$ and $p(x)/\sigma\le p(x)$ for all $x>0$. Hence,
\begin{align*}
&\int_{\R}\ab{\frac{1}{\sigma}p\pa{\frac{x}{\sigma}}-p(x)}dx\\
&\le \int_{\R}\ab{\frac{1}{\sigma}p\pa{\frac{x}{\sigma}}-\frac{1}{\sigma}p(x)}dx+\int_{\R}\ab{\frac{1}{\sigma}p\pa{x}-p(x)}dx\\
&=\frac{1}{\sigma}\int_{\R}\pa{p\pa{\frac{x}{\sigma}}-p(x)}dx+\int_{\R}\pa{p(x)-\frac{1}{\sigma}p\pa{x}}dx\\
&=2\pa{1-\frac{1}{\sigma}},
\end{align*}
which leads to~\eref{eq-reg002}.

Finally, by combining~\eref{eq-reg001} and~\eref{eq-reg002} we deduce that for all $m\in\R$ and $\sigma\ge 1$
\begin{align*}
&\frac{1}{2}\int_{\R}\ab{\frac{1}{\sigma}p\pa{\frac{x-m}{\sigma}}-p(x)}dx\\
&=\frac{1}{2}\int_{\R}\ab{\frac{1}{\sigma}p\pa{\frac{x-m}{\sigma}}-\frac{1}{\sigma}p\pa{\frac{x}{\sigma}}}dx+\frac{1}{2}\int_{\R}\ab{\frac{1}{\sigma}p\pa{\frac{x}{\sigma}}-p(x)}dx\\
&=\frac{1}{2}\int_{\R}\ab{p\pa{u-\frac{m}{\sigma}}-p(u)}du+\frac{1}{2}\int_{\R}\ab{\frac{1}{\sigma}p\pa{\frac{x}{\sigma}}-p(x)}dx\\
&\le B\ab{\frac{m}{\sigma}}+\pa{1-\frac{1}{\sigma}}
\end{align*}
which yields to~\eref{eq-reg003}.

\subsection{Proof of Proposition~\ref{cor-sparcity}}\label{Proof-cor-sparcity} 
This proposition is a consequence of Corollary~\ref{cor-hell}. Let us first check that the assumptions of this corollary are satisfied.  
For all $S\in\sP$, the mapping $\gtheta\mapsto h(S,P_{\theta})$ is continuous because of \eref{cond-hell}. It is therefore measurable and it follows from the definition of the algebra $\cA$ that Assumption~\ref{Hypo-0} is satisfied. Since the mapping $(x,\gtheta)\mapsto p(x,\gtheta)$ is measurable, so are the mappings 
\[
\map{\gp}{(\R^{k}\times E\times E}{\R_{+}}{(x,\bst,\bst')}{(p_{\bst}(x),p_{\bst'}(x)).}
\]
and $ (x,\bst,\bst')\mapsto \psi\pa{\sqrt{p_{\bst'}(x)/p_{\bst}(x)}}$, since $\psi$ is measurable. We deduce that $(x,P,P')\mapsto t_{(P,P')}(x)$ is measurable on $(E\times \sM\times \sM,\cE\otimes \cA\otimes \cA)$ which proves that Assumption~\ref{Hypo-1}-\ref{cond-1} holds true. The requirements of Corollary~\ref{cor-hell} are therefore satisfied and we may apply it. In order to evaluate the quantity $r_{n}(\beta,P_{\gtheta})$ for $\gtheta\in \R^{k}$, we use the following lemma the proof of which is postponed to Section~\ref{Proof-lem-uni}.
\begin{lem}\label{lem-uni}
Let $\bst\in [-R,R]^{k}$. For all $m\subset \{1,\ldots,k\}$ and $r>0$  
\begin{align*}
&\nu_{m}\pa{\ac{\bst'\in\R^{k},\, \ab{\bst'-\bst}_{\infty}\le r}}\\
&\quad =
\begin{cases}
\dps{\frac{1}{2^{|m|}}\prod_{i\in m}\cro{\pa{1-\frac{|\theta_{i}|}{R}}\wedge  \frac{r}{R}+\pa{1+\frac{|\theta_{i}|}{R}}\wedge  \frac{r}{R}}} & \text{if $\ab{\theta_{i}}\le  r$ for all $i\not\in m$ }\\
0 & \text{otherwise,}
\end{cases}
\end{align*}
with the convention $\prod_{\vide}=1$. In particular, if $\bst\in \Theta_{m}(R)$ and 
\begin{equation}\label{eq-lem-uni00}
\nu_{m}\pa{\ac{\bst'\in\R^{k},\, \ab{\bst'-\bst}_{\infty}\le r}}\ge \frac{1}{2^{|m|}}\pa{\frac{r}{R}\wedge 1}^{|m|}
\end{equation}
and for all $K>1$
\begin{equation}\label{eq-lem-uni}
\frac{\nu_{m}\pa{\ac{\bst'\in\R^{k},\, \ab{\bst'-\bst}_{\infty}\le Kr}}}{\nu_{m}\pa{\ac{\bst'\in\R^{k},\, \ab{\bst'-\bst}_{\infty}\le r}}}\le K^{|m|}.
\end{equation}
\end{lem}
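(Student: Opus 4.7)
The plan is to compute the measure directly by exploiting the product structure of $\nu_m$ and then derive the two consequences as easy monotonicity estimates. Recall that $\nu_m$ is the image, via the embedding $(\theta_i)_{i\in m}\mapsto \bst'\in\Theta_m$ (with coordinates outside $m$ set to zero), of the uniform probability on the cube $[-R,R]^m$; in particular it is a product of $|m|$ copies of the uniform measure on $[-R,R]$.

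First I would unpack the event $\ac{\bst'\in\R^k,\ \ab{\bst'-\bst}_\infty\le r}$ relative to $\nu_m$. For $i\notin m$ the $i$-th coordinate of $\bst'$ is $0$ under $\nu_m$, so the constraint reduces to $\ab{\theta_i}\le r$; if this fails for a single $i\notin m$, the event is $\nu_m$-null, which yields the second line of the formula. Otherwise the event becomes, by Fubini,
\[
\bigotimes_{i\in m}\ac{\theta_i'\in [-R,R]:\ab{\theta_i'-\theta_i}\le r},
\]
and its $\nu_m$-measure equals $\prod_{i\in m}\frac{1}{2R}\,\mathrm{length}\bigl([\theta_i-r,\theta_i+r]\cap [-R,R]\bigr)$. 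Since $\ab{\theta_i}\le R$, the length equals $\min(r,R-\theta_i)+\min(r,R+\theta_i)$, and dividing by $2R$ gives $\tfrac12[(1-\theta_i/R)\wedge (r/R)+(1+\theta_i/R)\wedge (r/R)]$; swapping $\theta_i\leftrightarrow-\theta_i$ leaves the sum invariant, so $\theta_i$ can be replaced by $\ab{\theta_i}$ in the formula. This yields the announced expression.

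For \eref{eq-lem-uni00}, note that when $\bst\in\Theta_m(R)$ the "otherwise" case does not occur, and in each factor the second minimum $(1+\ab{\theta_i}/R)\wedge (r/R)$ is already at least $(r/R)\wedge 1$ since $1+\ab{\theta_i}/R\ge 1$; dropping the first minimum and forming the product gives the bound. For \eref{eq-lem-uni} I would observe that the per-coordinate factor
\[
f_i(r)=\Bigl(1-\tfrac{\ab{\theta_i}}{R}\Bigr)\wedge \tfrac{r}{R}+\Bigl(1+\tfrac{\ab{\theta_i}}{R}\Bigr)\wedge \tfrac{r}{R}
\]
is a sum of two concave, nondecreasing, piecewise-linear functions of $r\ge 0$ vanishing at $r=0$; hence $r\mapsto f_i(r)/r$ is nonincreasing, so $f_i(Kr)\le Kf_i(r)$ for any $K\ge 1$. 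Taking the product over $i\in m$ yields the claimed $K^{\ab{m}}$ bound.

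There is no real obstacle here; the only points that require a touch of care are the tensor-product argument for $i\notin m$ (which is the mechanism that produces the dichotomy in the formula) and the symmetry trick that turns $\theta_i$ into $\ab{\theta_i}$. Everything else reduces to the elementary fact that a nondecreasing concave function $f$ with $f(0)=0$ satisfies $f(Kr)\le Kf(r)$ for $K\ge 1$.
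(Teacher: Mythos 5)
Your proof is correct. The computation of the exact formula and of \eref{eq-lem-uni00} follows the same route as the paper: reduce by the product structure of $\nu_{m}$ to the one-dimensional uniform measure of $[\theta_{i}-r,\theta_{i}+r]\cap[-R,R]$, note that coordinates outside $m$ are $\nu_{m}$-a.s.\ zero (which produces the dichotomy), and use the $\theta_{i}\leftrightarrow-\theta_{i}$ symmetry to write the factor with $\ab{\theta_{i}}$; your lower bound for \eref{eq-lem-uni00} via the second minimum being at least $(r/R)\wedge 1$ is exactly what is needed. Where you genuinely diverge is \eref{eq-lem-uni}: the paper also reduces to a per-coordinate ratio $\Delta(r)=\nu([\theta-Kr,\theta+Kr])/\nu([\theta-r,\theta+r])\le K$, but proves it by an explicit case analysis on whether $\theta\pm Kr$ and $\theta\pm r$ fall inside $[-R,R]$, with several subcases. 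You instead observe that each per-coordinate factor $f_{i}(r)$ is nondecreasing, concave and vanishes at $0$, so $f_{i}(Kr)\le Kf_{i}(r)$ for $K\ge 1$, and multiply over $i\in m$. Your argument is shorter, avoids the bookkeeping of boundary cases, and isolates the structural reason the bound holds (sublinearity of concave functions through the origin); the paper's case analysis is more pedestrian but entirely self-contained and requires no appeal to convexity. One small point to make explicit if you write this up: for $\bst\in\Theta_{m}(R)$ and $r>0$ the denominator is strictly positive by \eref{eq-lem-uni00}, so the ratio in \eref{eq-lem-uni} is well defined, and the ``otherwise'' case of the formula cannot occur for either radius since $\theta_{i}=0$ for $i\notin m$.
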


Let us set $B=B_{k}$ for short and define $m\et$ as the subset of $\{1,\ldots,k\}$ that minimizes over those $m\subset \{1,\ldots,k\}$ the mapping 
\[
m\mapsto \inf_{\bst\in \Theta_{m}(R)}\ell(\overline P\et,P_{\bst})+\frac{|m|\log\pa{2kR(nB)^{1/s}}+1}{\gamma n\beta a_{1}}.
\]
Finally, let $\bst\et$ for some arbitrary element of $\Theta_{m\et}(R)$.
It follows from~\eref{cond-hell} and \eref{eq-lem-uni00} that for all $r>0$, 
\begin{align}
1&\ge \pi_{m}\pa{\sB(P_{\bst\et},r)}\nonumber\\
&=\nu_{m}\pa{\ac{\bst\in \R^{k},\; h^{2}(P_{\bst\et},P_{\bst})\le r}}\nonumber\\
&\ge \nu_{m}\pa{\ac{\bst\in \R^{k},\; \ab{\bst-\bst\et}_{\infty}\le (r/B)^{1/s}}}\nonumber\\
&\ge \frac{1}{2^{|m|}}\pa{\frac{(r/B)^{1/s}}{R}\wedge 1}^{|m|}\ge  \frac{1}{2^{|m|}}\pa{\frac{(r\wedge 1)^{1/s}}{RB^{1/s}}}^{|m|},\label{eq-cor2p2}
\end{align}
where the last inequality holds true under the assumption that $RB^{1/s}\ge 1$.

%
%
We deduce from~\eref{eq-cor2p2} that for all $r>0$
\begin{align}
&\frac{\pi\pa{\sB(P_{\bst\et},2r)}}{\pi\pa{\sB(P_{\bst\et},r)}}\le \frac{1}{\pi\pa{\sB(P_{\bst\et},r)}}\nonumber \\
&\le \frac{1}{\sum_{m\subset \{1,\ldots,k\}}e^{-L_{m}}\nu_{m}\pa{\ac{\bst\in \R^{k},\; \ab{\bst-\bst\et}_{\infty}\le (r/B)^{1/s}}}}\nonumber \\
&\le \frac{e^{L_{m\et}}}{\nu_{m\et}\pa{\ac{\bst\in \Theta_{m\et},\; \ab{\bst-\bst\et}_{\infty}\le (r/B)^{1/s}}}}\nonumber \\
&\le  \exp\cro{L_{m\et}+|m\et|\log\pa{\frac{2RB^{1/s}}{(r\wedge 1)^{1/s}}}}\nonumber \\
&=\exp\cro{|m\et|\log\pa{2kRB^{1/s}}+k\log\pa{1+\frac{1}{k}}+\frac{|m\et|}{s}\log\pa{\frac{1}{r}\vee 1}}.\label{eq-cor2p1}
\end{align}
Provided that 
\[
r\ge \frac{|m\et|\log\pa{2kR(nB)^{1/s}}+1}{\gamma n\beta a_{1}}\ge \frac{1}{n},
\]
we obtain 
\begin{align*}
&|m\et|\log\pa{2kRB^{1/s}}+k\log\pa{1+\frac{1}{k}}+\frac{|m\et|}{s}\log\pa{\frac{1}{r}\vee 1}\\
&\le  |m\et|\log\pa{2kRB^{1/s}}+k\log\pa{1+\frac{1}{k}}+|m\et|\log\pa{n^{1/s}}\\
&\le |m\et|\log\pa{2kR(nB)^{1/s}}+1\le \gamma n\beta a_{1}r
\end{align*}
and deduce from~\eref{eq-cor2p1} that $ r_{n}(\beta,P_{\bst\et})$ defined by~\eref{df-epsn} satisfies 
\begin{align*}
\frac{1}{n\beta a_{1}}\le r_{n}(\beta,P_{\bst\et})&\le \frac{|m\et|\log\pa{2kR(nB)^{1/s}}+1}{\gamma n\beta a_{1}}.
\end{align*}
Applying Corollary~\ref{cor-hell}, we obtain that for some numerical constant $\kappa_{0}'>0$, 
\[
\E\cro{\widehat \pi_{\bsX}\pa{\co{\sB}(\overline P\et,\kappa_{0}'\ray(m\et,\bst\et))}}\le 2e^{-\xi}
\]
with 
\begin{align*}
r(m\et,\bst\et)&=\ell(\overline P\et,P_{\bst\et})+\frac{|m\et|\log\pa{2kR(nB)^{1/s}}+\xi}{\gamma n\beta a_{1}}.
\end{align*}
Finally, the conclusion follows from the definition of $m\et$ and the fact that $\bst\et$ is arbitrary in $\Theta_{m\et}(R)$.

\subsection{Proof of Lemma~\ref{lem-uni}}\label{Proof-lem-uni}
Let $\theta\in\R$ and $\nu$ be the uniform distribution on $[-R,R]$. For all $\theta\in [-R,R]$ and $r>0$, 
\begin{align*}
\nu\pa{[\theta-r,\theta+r]}&=\frac{1}{2R}\cro{(\theta+r)\wedge R-(\theta-r)\vee (-R)}_{+}\\
&=\frac{1}{2R}\cro{(r+\theta)\wedge R+(r-\theta)\wedge R}_{+}\\
&=\frac{1}{2R}\cro{(r+|\theta|)\wedge R+(r-|\theta|)\wedge R}_{+}\\
&=\frac{1}{2}\cro{\pa{1-\frac{|\theta|}{R}}\wedge \frac{r}{R}+\pa{1+\frac{|\theta|}{R}}\wedge \frac{r}{R}}.
\end{align*}
Let now $\bst\in\R^{k}$ such that $\ab{\bst}_{\infty}\le R$. For all $m\subset \{1,\ldots,k\}$, $m\ne \vide$, 
\begin{align*}
\nu_{m}\pa{\ac{\bst'\in \Theta_{m},\; \ab{\bst'-\bst}_{\infty}\le r}}&= 0
\end{align*}
if there exists $i\not \in m$ such that $|\theta_{i}|>r$. Otherwise 
\begin{align*}
\nu_{m}\pa{\ac{\bst'\in \R^{k},\; \ab{\bst'-\bst}_{\infty}\le r}}&= \nu_{m}\pa{\ac{\bst'\in \Theta_{m},\; \max_{i\in m}\ab{\theta_{i}'-\theta_{i}}\le r}}\\
&=\prod_{i\in m}\nu\pa{\cro{\theta_{i}-r,\theta_{i}+r}}\\
&=\frac{1}{2^{|m|}}\prod_{i\in m}\cro{\pa{1-\frac{|\theta_{i}|}{R}}\wedge \frac{r}{R}+\pa{1+\frac{|\theta_{i}|}{R}}\wedge  \frac{r}{R}}.
\end{align*}
If $m=\vide$, 
\[
\nu_{\vide}\pa{\ac{\bst'\in \R^{k},\; \ab{\bst'-\bst}_{\infty}\le r}}=\1_{\ab{\bst}_{\infty}\le r}.
\]

Let us now turn to the proof of  \eref{eq-lem-uni}. Since $\bst\in \Theta_{m}(R)$, for all $K'\in\{1,K\}$
\begin{align*}
\nu_{m}&\pa{\ac{\bst'\in \R^{k},\; \ab{\bst'-\bst}_{\infty}\le K'r}}\\
&=\nu_{m}\pa{\ac{\bst'\in \Theta_{m},\; \max_{i\in m}\ab{\theta_{i}'-\theta_{i}}\le K'r}}\\
&=\prod_{i\in m}\nu\pa{[\theta_{i}-K'r,\theta_{i}+K'r]},
\end{align*}
It is therefore enough to show that for all $r>0$ and $\theta\in [0,R]$
\[
\Delta(r)=\frac{\nu\pa{\cro{\theta-Kr,\theta+Kr}}}{\nu\pa{\cro{\theta-r,\theta+r}}}\le K.
\]
This is what we do now by distinguishing between several cases.

When $\theta+Kr\le R$, $\theta-Kr\ge 2\theta-R\ge -R$ and consequently, $\Delta(r)=K$. When $\theta+Kr>R$ and $-R\le \theta-Kr$, 
\[
\Delta(r)=\frac{R-(\theta-Kr)}{(\theta+r)\wedge R-(\theta-r)}=
\begin{cases}
\dps{\frac{R-\theta+Kr}{R-\theta+r}}&\text{when $\theta+r>R$}\\
\ \\
\dps{\frac{R-\theta+Kr}{2r}}& \text{when $\theta+r\le R$,}
\end{cases}
\]
and the conclusion follows from the facts that $0\le R-\theta\le Kr$. When $\theta+Kr> R$ and $\theta-Kr< -R$, $r\ge (\theta+R)/K\ge R/K$, hence $R+r-\theta \ge 2R/K$ and $R\le  Kr$. Consequently, 
\begin{align*}
\Delta(r)&=\frac{2R}{(\theta+r)\wedge R-(\theta-r)\vee(-R)}\\
&=
\begin{cases}
\dps{\frac{2R}{2R}}=1&\text{when $\theta+r>R$ and $\theta-r<-R$}\\
\ \\
\dps{\frac{2R}{R+r-\theta}}\le K&\text{when $\theta+r>R$ and $\theta-r\ge -R$}\\
\ \\
\dps{\frac{2R}{2r}\le K} & \text{when $\theta+r\le R$},
\end{cases}
\end{align*}
which concludes the proof.

\subsection{Proof of Proposition~\ref{prop-odg}}\label{sect-pf-prop-odg}
Let $\eps$ be a small enough positive number. Since $q$ is continuous and positive at $\gtheta\et$ and since $\cK$ has a nonempty interior, 
there exists $z\et>0$ such that $\Theta\et=\cB_{*}(\gtheta\et,z\et)\subset \cK$, 
\begin{equation}\label{eq-para-100}
0<\underline b\et\le q(\gtheta)\le \overline b\et \quad \text{with $\overline b\et/\underline b\et\le 1+\eps$},
\end{equation}
for all $\gtheta\in \Theta\et$ and 
\begin{equation}\label{eq-para-101}
(1-\eps) |\gtheta-\gtheta\et|_{*}^{s}\le \ell(\gtheta,\gtheta\et)\le (1+\eps) |\gtheta-\gtheta\et|_{*}^{s}.
\end{equation}
In particular, $\nu(\Theta\et)>0$ and we may define the distribution $\nu\et=\nu(\cdot\cap \Theta\et)/\nu(\Theta\et)$ on $\Theta\et$ with density $q\et=q\1_{\Theta\et}/\nu(\Theta\et)$. Let $\sM\et=\{P_{\gtheta},\; \gtheta\in \Theta\et\}$ and $\pi\et$ be the prior on $\sM\et$ associated with $\nu\et$. The parameter space $\Theta\et$ is convex and it follows fom~\eref{eq-para-101} that $(\Theta\et,\gtheta\et,\ell, \nu\et)$ satisfy Assumption~\ref{Ass-param}-\ref{Ass-param-i} with $\overline a=1+\eps, \underline a=1-\eps$. Besides, it follows from \eref{eq-para-100} that the density $q\et$ satisfies condition \eref{eq-densborn} on $\Theta\et$. We may apply Proposition~\ref{casconv} and deduce that for the model $(\sM\et,\pi\et)$,  $r_{n}\et=r_{n}\et(\beta,P_{\gtheta\et})$ is not larger than $\kappa_{0}\et k/(\beta n)$ with 
\[
\kappa_{0}\et=\frac{1}{a_{1}\gamma}\ac{\cro{1+\frac{\log\cro{2(1+\eps)/(1-\eps)}}{s\log 2}}\log\pa{2(1+\eps)}}\vee 1<\frac{(1+s^{-1})}{a_{1}\gamma}
\]
for $\eps$ small enough. Consequently, for all $r\ge r_{n}\et$
\begin{align}
\pi\et\pa{\sB(P_{\gtheta\et},2r)}&=\frac{1}{\nu(\Theta\et)}\nu\pa{\{\gtheta\in \Theta,\; \ell(\gtheta,\gtheta\et)\le 2r\}\cap \Theta\et}\nonumber\\
&\le  \frac{\exp\pa{\gamma n\beta a_{1}r}}{\nu(\Theta\et)}\nu\pa{\{\gtheta\in \Theta,\; \ell(\gtheta,\gtheta\et)\le r\}\cap \Theta\et}\nonumber\\
&\le  \frac{\exp\pa{\gamma n\beta a_{1}r}}{\nu(\Theta\et)}\nu\pa{\{\gtheta\in \Theta,\; \ell(\gtheta,\gtheta\et)\le r\}}\label{eq-conf00}.
\end{align}
Let $r_{1}=[(z\et)^{s}\underline a_{\cK})\wedge \eta]/2$. If $r\in (0,r_{1})$ and the parameter $\gtheta\in \Theta$ satisfies $\ell(\gtheta,\gtheta\et)\le 2r$, then $\ell(\gtheta,\gtheta\et)<\eta$ and $\gtheta$ necessarily belongs to $\cK$ under Assumption~\ref{ass-para-1}-\ref{ass-para-1ii}. Applying \eref{eq-para-1ii} we deduce that for such a parameter $\gtheta\in \Theta$
\[
\underline a_{\cK}\ab{\gtheta-\gtheta\et}_{*}^{s}\le  \ell\pa{\gtheta,\gtheta\et}\le 2r < 2r_{1}\le \underline a_{\cK}(z\et)^{s},
\]
which implies that $\gtheta\in \Theta\et$. For $n$ large enough, $r_{n}\et=\kappa_{0}\et k/n<r_{1}$ and for $r\in (r_{n},r_{1})$ we may therefore write, using~\eref{eq-conf00}, 
\begin{align}
\pi\pa{\sB(P_{\gtheta\et},2r)}&=\nu\pa{\{\gtheta\in \Theta,\; \ell(\gtheta,\gtheta\et)\le 2r\}}\nonumber\\
&=\nu\pa{\{\gtheta\in \Theta,\; \ell(\gtheta,\gtheta\et)\le 2r\}\cap \Theta\et}\nonumber\\
&\le \exp\pa{\gamma n\beta a_{1}r}\nu\pa{\{\gtheta\in \Theta,\; \ell(\gtheta,\gtheta\et)\le r\}}\nonumber\\
&=\exp\pa{\gamma n\beta a_{1}r}\pi\pa{\sB(P_{\gtheta\et},r)}.\label{eq-conf02}
\end{align}
Since $q$ is bounded away from 0 in a neighbourhood of $\gtheta\et$, $\pi\pa{\sB(P_{\gtheta\et},r_{1})}>0$ and we may also write that for $n$ large enough and $r\ge r_{1}$
\begin{align}
\pi\pa{\sB(P_{\gtheta\et},r)}&\ge \pi\pa{\sB(P_{\gtheta\et},r_{1})}\nonumber\\
&=\exp\cro{\log \pi\pa{\sB(P_{\gtheta\et},r_{1})}+\gamma n\beta a_{1}r_{1}-\gamma n\beta a_{1}r_{1}}\nonumber\\
&\ge \exp\cro{-\gamma n\beta a_{1}r_{1}}\ge  \exp\cro{-\gamma n\beta a_{1}r}\nonumber\\
&\ge \exp\cro{-\gamma n\beta a_{1}r}\pi\pa{\sB(P_{\gtheta\et},2r)}.\label{eq-conf03}
\end{align}
Putting \eref{eq-conf02} and \eref{eq-conf03} together we obtain that for $n$ large enough
\[
\pi\pa{\sB(P_{\gtheta\et},2r)}\le \exp\pa{\gamma n\beta a_{1}r}\pi\pa{\sB(P_{\gtheta\et},r)}\quad \text{for all $r\ge r_{n}\et$}
\]
and consequently that $r_{n}(\beta,P_{\gtheta\et})\le r_{n}\et=\kappa_{0}\et k/n$.

\bibliography{biblio}

\end{document}